\documentclass[preprint,sort&compress,11pt]{elsarticle}
\usepackage[T1]{fontenc}
\usepackage{lmodern}
\usepackage{bbm}
\usepackage{bm}
\usepackage{amsmath}
\usepackage{amssymb}
\usepackage{amsfonts}
\usepackage{amsthm}
\usepackage{mathrsfs}

\usepackage{stmaryrd}
\SetSymbolFont{stmry}{bold}{U}{stmry}{m}{n}

\usepackage{pstricks}
\usepackage{pst-coil}
\usepackage{pst-3d}

\usepackage[colorlinks=true,linkcolor=blue,citecolor=blue,urlcolor=blue]{hyperref}

\makeatletter
\let\ps@pprintTitle\ps@empty
\makeatother

\allowdisplaybreaks[4]
\newcommand{\mm}{\mathrm}

\newcommand{\be}{\begin{equation}}
\newcommand{\bea}{\begin{equation}\begin{aligned}}
\newcommand{\beas}{\begin{equation*}\begin{aligned}}
\newcommand{\eeas}{\end{aligned}\end{equation*}}
\newcommand{\eea}{\end{aligned}\end{equation}}
\newcommand{\ee}{\end{equation}}

\begin{document}
\begin{frontmatter}
\title{On a thermodynamically consistent diffuse interface model for \\ incompressible two-phase flows with unmatched densities: \\
Energy equality and Lyapunov stability}
%%%%

\author[UR]{Harald Garcke}
\ead{harald.garcke@ur.de}

\author[FDU]{Maoyin Lv}
\ead{mylv22@m.fudan.edu.cn}

\author[FDU]{Hao Wu\corref{cor1}}
\ead{haowufd@fudan.edu.cn}

\cortext[cor1]{Corresponding author.}
%%%%%%%%%%%%%%%%%%%%%%%%%%%%%%%

\address[UR]{Fakult\"at f\"ur Mathematik, Universit\"at Regensburg, 93040 Regensburg, Germany}

\address[FDU]{School of Mathematical Sciences, Fudan University, 200433 Shanghai, China}

%%%%%%%%%%%%%%%%%
\begin{abstract}
\noindent We consider the initial-boundary value problem of a thermodynamically consistent diffuse interface model for incompressible two-phase flows with unmatched densities in a bounded domain $\Omega\subset\mathbb{R}^3$. Our first aim is to study the energy equality for global weak solutions by establishing mixed $L_t^qL_x^r$-regularity conditions on the velocity field, its gradient, and its time derivative, under which the global weak solution conserves its energy for all time. The proof is based on the propagation of regularity for weak solutions to the convective Cahn--Hilliard equation with a physically relevant Flory--Huggins-type potential, combined with global mollification and boundary cut-off techniques. Next, we prove the existence and uniqueness of global strong solutions in the general setting with non-constant gradient energy coefficient and non-degenerate mobility, provided that the initial velocity is sufficiently small and the initial phase-field variable is a sufficiently small perturbation of a local minimizer of the free energy. This yields Lyapunov stability for each steady state consisting of a zero velocity together with a local energy minimizer. The proof relies on the energy equality for (local) strong solutions and the {\L}ojasiewicz--Simon approach.
\end{abstract}
\begin{keyword}
Abels--Garcke--Gr\"un model, two-phase flow, energy conservation, Lyapunov stability.
\smallskip
\MSC[2020] 35A01, 35D30, 35D35, 35K35, 35Q35, 76D05, 76T06.
\end{keyword}
\end{frontmatter}

%% Start line numbering here if you want
%\linenumbers

%% main text
\newtheorem{thm}{Theorem}[section]
\newtheorem{lem}{Lemma}[section]
\newtheorem{pro}{Proposition}[section]
\newtheorem{cor}{Corollary}[section]
\newproof{pf}{Proof}
\newdefinition{rem}{Remark}[section]
\newtheorem{definition}{Definition}[section]

\numberwithin{equation}{section}

%\tableofcontents

%%%%%%%%%%%%%%%%%%%%%%
\section{Introduction}

In this work, we consider the following diffuse interface model for a two-phase flow of two incompressible fluids with
different densities proposed by Abels, Garcke and Gr\"un in \cite{AGG2011} (henceforth referred to as the AGG model):
\begin{equation}\label{AGG}
\begin{cases}
\partial_t( \rho(\varphi) \mathbf{v})+ \text{div}(\mathbf{v}\otimes(\rho(\varphi)\mathbf{v}+{\mathbf{J}}))
-\mm{div}(2\nu(\varphi) \mathbb{D}\mathbf{v}
) +\nabla p &\\
\quad   =-
   \mm{div}( a(\varphi)\nabla \varphi\otimes\nabla \varphi  ) , &\text{in }\Omega\times(0,T), \\
\mathrm{div}\,\mathbf{v} =0, &\text{in }\Omega\times(0,T), \\
 \partial_t\varphi+{  \mathbf{v}}\cdot\nabla \varphi=\text{div}(b(\varphi)\nabla\mu), &\text{in }\Omega\times(0,T), \\
 \mu=a'(\varphi)\dfrac{|\nabla\varphi|^2}{2}-\text{div}(a(\varphi)\nabla\varphi)+\Psi'(\varphi), &\text{in }\Omega\times(0,T),
 \end{cases}
\end{equation}
subject to boundary and initial conditions:
\begin{align}
	\begin{cases}
		\mathbf{v}=\bm{0},\quad\partial_{\mathbf{n}}\varphi=\partial_{\mathbf{n}}\mu=0&\text{on }\partial\Omega\times(0,T),\\
		\mathbf{v}|_{t=0}=\mathbf{v}_0,\quad \varphi|_{t=0}=\varphi_0&\text{in }\Omega.
	\end{cases}\label{BC-IC}
\end{align}
Here, $\Omega\subset\mathbb{R}^3$ is a bounded domain with a sufficiently smooth boundary $\partial\Omega$,
and $T>0$ is a given final time.
The symbol $\mathbf{n}$ means the unit outward normal vector on $\partial\Omega$, and $\partial_\mathbf{n}$ denotes the outer normal derivative on the boundary.
In the coupled hydrodynamic system \eqref{AGG}, $\mathbf{v}:\Omega\times(0,T)\to \mathbb{R}^3$ denotes the volume averaged velocity of the fluid mixture and $p:\Omega\times(0,T)\to\mathbb{R}$ is the pressure. The symmetrized gradient of $\mathbf{v}$ is defined as $\mathbb{D}\mathbf{v}=(\nabla\mathbf{v}+(\nabla\mathbf{v})^{\top})/2$,
where the superscript $\top$ denotes the transposition. The phase-field variable $\varphi:\Omega\times (0,T)\to [-1,1]$ represents an order parameter given as the difference in the volume fractions of the fluid components such that the pure phases correspond to $\varphi=\pm 1$. In addition, $\mu:\Omega\times (0,T)\to \mathbb{R}$ denotes the chemical potential associated with $\varphi$. The averaged density $\rho$ and the averaged viscosity $\nu$ of the fluid mixture are defined via the typical linear form:
$$
\rho (\varphi) =\rho_1\frac{1+\varphi}{2}+\rho_2\frac{1-\varphi}{2},
\quad
\nu (\varphi) =\nu_1\frac{1+\varphi}{2}+\nu_2\frac{1-\varphi}{2},
$$
where the positive constants $\rho_1$, $\rho_2$ and $\nu_1$, $\nu_2$ are the homogeneous densities and viscosities of the two unmixed fluids, respectively. The flux term ${\mathbf{J}}$ given by
$$
 {\mathbf{J}}=-\frac{\rho_1-\rho_2}{2}b(\varphi)\nabla\mu,
$$
is related to diffusion caused by the density mismatch and is crucial for consistency with thermodynamics.
Here, the function $b:[-1,1]\to(0,\infty)$ denotes the (non-degenerate) Onsager mobility that measures the strength of diffusion. The total energy associated with the system \eqref{AGG} is given by
 \begin{align}
 	\mathcal{E}=E_{\text{kin}}(\mathbf{v},\varphi)+E_{\text{free}}(\varphi)
    =\int_\Omega\rho(\varphi)\frac{|\mathbf{v}|^2}{2}\,\mathrm{d}x
    +\int_\Omega \Big(a(\varphi)\frac{|\nabla\varphi|^2}{2}+\Psi(\varphi)\Big)\,\mathrm{d}x.\notag
    %\label{total-energy}
 \end{align}
In the expression of $E_{\text{free}}$, the positive function $a:[-1,1]\to (0,\infty)$ represents a gradient energy coefficient, while $\Psi:[-1,1]\to \mathbb{R}$ denotes the homogeneous free energy density, which usually has a double-well structure. A physically relevant example of $\Psi$ is the Flory--Huggins potential:
\begin{align}
    \Psi(s)=\Psi_0(s)-\frac{\Theta_0}{2}s^2
    =\frac{\Theta}{2}[(1+s)\text{ln}(1+s)+(1-s)\text{ln}(1-s)]-\frac{\Theta_0}{2}s^2,\quad s\in [-1,1],\label{FH}
\end{align}
where $0<\Theta<\Theta_0$ are constant parameters related to the system temperature and the critical temperature for phase separation. The logarithmic potential \eqref{FH} leads to a singular term in the equation for the chemical potential. We also note that the chemical potential $\mu$ is the variational derivative of the Ginzburg--Landau free energy $E_{\text{free}}(\varphi)$ with respect to $\varphi$, subject to the homogeneous Neumann boundary condition (see \eqref{BC-IC}).

The AGG model \eqref{AGG}--\eqref{BC-IC} provides a thermodynamically consistent diffuse interface framework to describe the evolution of two viscous, (macroscopically) immiscible, and incompressible Newtonian fluids with unmatched densities. This model adopts the volume-averaged velocity of the fluid mixture so that $\mathbf{v}$ is kept divergence-free. Moreover, it fulfills local and global dissipation inequalities and is frame indifferent (see \cite{AGG2011}). We recall that various types of diffuse interface models have been proposed for incompressible two-phase flows in the literature. The well-known ``Model H'' (see \cite{HH,Gur96}) can be recovered from the AGG model \eqref{AGG}--\eqref{BC-IC} by taking matched densities $\rho_1=\rho_2$ (see \cite{Gior21,Gior22} for a rigorous justification).
% In recent years, the diffuse interface models have generated increasing interest as the theoretical developments and their advantages in computations.
%The diffuse interface models admit partial miscibility in thin transition layers of finite thickness,
%which has the advantage that the interface does not need to be described explicitly in numerical simulations
%and one can describe flows beyond the occurrence of singularities.
%Among the diffuse interface models, a typical model is the so-called Model H,
%where the two fluids have matched density (cf. \cite{Gur96,HH}).
%
For the development of thermodynamically consistent diffuse interface models for multi-constituent flows with unmatched densities, we refer to \cite{AGG2011,Aki14,LT1998,Shen13,Sho18,ten24},
which are based on different choices of the mean velocity of the mixture and constitutive assumptions. Further discussions on the existing Navier--Stokes--Cahn--Hilliard models can be found in \cite{ten23}, where a unified framework was developed.

The AGG model \eqref{AGG}--\eqref{BC-IC} has been extensively studied from different aspects, see, for instance, \cite{AGG2018,ADGHP,ADG2013,AWJEE,Gior21,Gior22,AbelsGarckeGiorgini,GrasselliPoiatti}. Regarding progress in its nonlocal variant and other generalizations, we refer to  \cite{ADGJMFM,AGPCVPDE,ATM2AS,Frigeri16,Frigeri2021,GGGP,AGW2025} and the references therein. For sufficiently smooth solutions of the problem \eqref{AGG}–\eqref{BC-IC}, the following mass conservation and energy balance properties hold:
 \begin{align}
    & \int_\Omega \varphi(t)\,\mathrm{d}x=\int_\Omega \varphi_0\,\mathrm{d}x,\quad \forall\, t\geq 0,\label{mass-conv}\\
 	&\mathcal{E}(t)
    + 2 \int_0^t\int_\Omega \nu(\varphi(s))|\mathbb{D}\mathbf{v}(s)|^2\,\mathrm{d}x\,\mathrm{d}s
    +\int_0^t\int_\Omega b(\varphi(s))|\nabla\mu(s)|^2\,\mathrm{d}x\,\mathrm{d}s
    =\mathcal{E}(0),\quad \forall\, t\geq 0.
    \label{energy-equality}
 \end{align}
These properties play a crucial role in the analysis of solutions, including global existence, (weak-strong) uniqueness, and long-time behavior. The validity of \eqref{mass-conv} and \eqref{energy-equality} depends on the regularity properties of the solutions. In particular, due to the limited regularity, we only expect global weak solutions of problem \eqref{AGG}--\eqref{BC-IC} to satisfy an energy inequality (see \cite{ADG2013,ADGHP}). In \cite{AbelsGarckeGiorgini}, global regularity and long-time stabilization of global weak solutions were achieved in the special case with a constant mobility $b$ and a constant gradient energy coefficient $a$. There, after establishing the regularization of a global weak solution in finite time and the strict separation of the phase-field variable from the pure phases $\pm 1$, the authors demonstrated its convergence to a single equilibrium using the {\L}ojasiewicz--Simon inequality combined with the energy identity. In the recent work \cite{GrasselliPoiatti}, the authors introduced a different approach and proved the convergence to a single equilibrium for any global weak solution only satisfying an energy inequality, under the assumption that the mobility $b$ is a non-degenerate continuous function and the gradient energy coefficient $a$ is a positive constant.

The energy equality can rule out the possibility
of interior anomalous energy dissipation in the fluid mixture. This is well motivated from a physical perspective but remains an open problem, as the existence of sufficiently smooth global solutions for arbitrarily large initial data is unknown. Therefore, the aim of this study is two-fold:
\begin{itemize}
    \item[(i)] establish regularity criteria under which global weak solutions of the AGG model \eqref{AGG}--\eqref{BC-IC} satisfy the energy equality \eqref{energy-equality};
    \item[(ii)] explore the role of energy equality in the Lyapunov stability of energy-minimizing steady states (in the context of global strong solutions).
\end{itemize}

\subsection{Regularity criteria for the energy equality}

The lack of energy equality for the AGG model \eqref{AGG}--\eqref{BC-IC} is reminiscent of that for the Leray--Hopf solution to the Navier--Stokes equations for incompressible single-fluid flows. For the classical Navier--Stokes equations, various $L^q_tL^r_x$-type regularity criteria on the velocity field have been established to justify the energy conservation for global weak solutions, see, e.g., \cite{JL-Lions,Prodi,Lady,Serrin,Shinbrot}.
%
% The problem is linked to the celebrated Onsager conjecture \cite{Onsager} for the Euler equations:
%Every weak solution with $C^\alpha$-H\"older continuity  $(\alpha>\frac{1}{3})$ conserves its energy,
%and anomalous dissipation of energy occurs when $\alpha<\frac{1}{3}$.
%We refer to \cite{BardosTiti,Cheskidov2008,Constantin1994,Buckmaster,Eyink,Isett} for the progress toward both directions on the conjecture.
%
%
% The first attempts to determine sufficient conditions implying the validity of the energy equality
%can be found in a series of papers by Lions \cite{JL-Lions} and Prodi \cite{Prodi}, where the criterion $\mathbf{v}\in L^4(0,T;\mathbf{L}^4(\Omega))$ was established.
%
%For incompressible flows with constant density, Lions \cite{JL-Lions} proved that energy equality holds for $\mathbf{v}\in L^4_{t,x}$.
% This was reproduced by Lady$\check{\mathrm{z}}$enskaya, Solonnikov, and Ural'ceva \cite{Lady} in the general context of parabolic equations.
% In \cite{Serrin}, Serrin provided a dimension-dependent condition $\mathbf{v}\in L^p(0,T;\mathbf{L}^q(\Omega))$ for $\frac{2}{p}+\frac{d}{q}\leq1$ and $q>d$, where $d$ is the space dimension. Later, Shinbrot in \cite{Shinbrot} removed the dimensional dependence and improved the conditions to $\frac{2}{p}+\frac{2}{q}\leq1$ and $q\geq4$.
%An alternative proof of Shinbrot's result can be found in \cite{Yu}.
%
An alternative condition involving the velocity gradient instead of the velocity itself can be found in \cite{BY2019,BC2020}.
% where the authors assumed that
% \begin{align*}
%     \begin{cases}
%         \nabla\mathbf{v}\in L^\frac{q}{2q-3}(0,T;\mathbf{L}^q(\Omega))&\text{for }\frac{3}{2}<q<\frac{9}{5},\\
%         \nabla\mathbf{v}\in L^\frac{5q}{5q-6}(0,T;\mathbf{L}^q(\Omega))&\text{for } \frac{9}{5}\leq 1\leq 3,\\
%         \nabla\mathbf{v}\in L^{1+\frac{2}{q}}(0,T;\mathbf{L}^q(\Omega))&\text{for }q>3.
%     \end{cases}
% \end{align*}
%
For recent progress, we refer to   \cite{Ber2023,Berselli2021,CheLuo,Leslie,CFS,Farwig} and the review article \cite{BY2025}.

Next, we recall some developments on the diffuse interface models for two-phase flows.
In \cite{LiangShuai2019}, the authors considered the compressible Cahn--Hilliard--Navier--Stokes system
in the periodic domain $\mathbb{T}^3$. They established a regularity condition for the energy conservation of global  weak solutions based on the commutator estimates and mollification approximations. For the incompressible Cahn--Hilliard--Navier--Stokes system (i.e., the ``Model H'') in $\mathbb{T}^3$, an $L^q_tL^r_x$-type regularity condition on the gradient of the velocity field was derived in \cite{LNS}, under an additional assumption on the continuity of $\Psi''$ on $[-1,1]$. Later in \cite{Geo}, the author relaxed the condition in \cite{LNS} and established the energy equality assuming that the velocity field is H\"older continuous in space.

In this study, we establish a regularity criterion
for the energy equality of the AGG model \eqref{AGG}--\eqref{BC-IC} with a constant gradient coefficient $a$ and a constant mobility $b$ in a smooth bounded domain $\Omega \subset \mathbb{R}^3$, which is valid for the Flory--Huggins potential \eqref{FH}. The result is presented in Theorem \ref{criterion}. In the following, we briefly mention some features of the problem and key points of the proof.

(1) The regularity condition on $\mathbf{v}$ introduced in Theorem \ref{criterion} looks stronger than those in \cite{LNS,Geo}, where the specific densities of both fluid components are assumed to coincide. This is mainly due to the non-homogeneity of the fluid mixture.
When $\rho_1\neq \rho_2$, the momentum equation for the fluid velocity $\mathbf{v}$ contains an additional flux term $\mathbf{J}=-\rho'(\varphi)b(\varphi)\nabla\mu$, which is highly nonlinear. Consequently, $\mathbf{v}\otimes\mathbf{J}\in L^{\frac{8}{7}}(0,T;\mathbf{L}^{\frac{4}{3}}(\Omega))$, which yields further requirements on the regularity of test functions.
Moreover, the non-matched densities also lead to the time derivative of the weak solution, namely, $\partial_t(\mathbb{P}_\sigma(\rho(\varphi)\mathbf{v}))$,  belongs only to $L^{\frac{8}{7}}(0,T;\mathbf{W}^{-1,\frac{4}{3}}(\Omega))$, weaker than its counterpart in the ``Model H''. Indeed, the additional regularity conditions \eqref{conditional-regularity}, \eqref{conditional-regularity'} and the instantaneous regularity of $\varphi$ (see \cite[Theorem 1.3]{AbelsGarckeGiorgini} and Proposition \ref{weak-solution} below) imply that for any $\tau>0$, it holds
    \[ \partial_t(\rho(\varphi)\mathbf{v})\in L_{\mathrm{loc}}^{q'}([\tau/2,\infty);\mathbf{W}^{-1,r'}(\Omega)),
    \quad p\in L_{\mathrm{loc}}^{q'}([\tau/2,\infty);\mathbf{L}^{r'}(\Omega)),
    \]
where $q,r\geq2$, $1/q+1/q'=1$ and $1/r+1/r'=1$. The above properties enable us to test the momentum equation with test functions related to $\mathbf{v}$
   and to pass to the limit as the approximating parameters tend to zero.

(2) Compared to \cite{LNS,Geo}, we achieve our result in a smooth bounded domain $\Omega \subset \mathbb{R}^3$ instead of the torus $\mathbb{T}^3$. The main idea is to apply techniques that involve a global mollifier and a boundary cut-off function to overcome the difficulties caused by the boundary (cf. \cite{CLWX2020}). When the fluid mixture is confined in a bounded domain, possible boundary effects can make the dissipative mechanism more complex, and the near-wall behavior of the solution may differ substantially from that in the bulk. This yields additional challenges to control the regularity of the solution near the boundary, which is necessary to establish the global energy balance. The basic strategy for handling the boundary effect is localization: introducing an additional cut-off function to separate the near-boundary region from the interior, see, e.g., \cite{Akramov,Bardos2019,BardosTiti,BardosTitiWiedemann,Drivas}. In recent work \cite{CLWX2020}, the authors introduced an approach that avoids assuming additional regularity of the velocity near the boundary and successfully applied it to both compressible and incompressible Navier–Stokes systems.

(3) In Theorem \ref{criterion}, we confine ourselves to the simplified case with a constant gradient coefficient $a$ and a constant mobility $b$. This choice is mainly due to the available regularity of the phase-field variable and the chemical potential $(\varphi,\mu)$. In \cite{LNS,Geo}, an essential structural assumption, i.e., $\Psi''\in C([-1,1])$, was imposed to obtain the energy equality. Unfortunately, the Flory--Huggins potential \eqref{FH}, which is of central interest, does not satisfy this continuity property for the second derivative of $\Psi$. To remove this assumption, we take advantage of the instantaneous propagation of regularity for $(\varphi,\mu)$, that is,
    \[
    \varphi\in L^\infty(\tau,\infty;W^{2,6}(\Omega)) \cap H^1_{\mathrm{uloc}}([\tau,\infty);H^1(\Omega)),
    \quad \mu\in L^\infty(\tau,\infty;H^1(\Omega))\cap L^2_{\mathrm{uloc}}([\tau,\infty);H^3(\Omega)),
    \]
    for any $\tau>0$, see \cite[Theorem 1.3]{AbelsGarckeGiorgini} and Proposition \ref{weak-solution} below. This allows us to overcome the difficulties caused by the singular Flory--Huggins potential \eqref{FH} and to pass to the limit as the approximating parameters tend to zero. However, the global regularity properties mentioned above for weak solutions $(\varphi,\mu)$ of the three-dimensional Cahn--Hilliard equation with divergence-free drift only hold when both the gradient coefficient $a$ and the mobility $b$ are positive constants. Even for the pure Cahn--Hilliard equation with a singular potential and non-constant coefficients, the global regularity for weak solutions in three dimensions remains open, cf. \cite{CGGG,CGGS}.

%%%%%%%%%%%%%%%%%%%%%%%%%%%%%%%%%%%%%%%%%%%%%%%%%%%%%
\subsection{Global strong solutions and Lyapunov stability of steady states}

Since the AGG model \eqref{AGG}--\eqref{BC-IC} contains the Navier--Stokes equations driven by a capillary force term as a subsystem, it is natural to expect that the existence and uniqueness of a global strong solution can only be obtained under suitable smallness assumptions on the initial data, e.g., near a certain equilibrium. For the ``Model H'' with a singular potential $\Psi$ in three dimensions, the stability of local minimizers of the total energy was established in \cite{ZhouDX,Abels2009} via different methods both essentially based on the {\L}ojasiewicz--Simon approach \cite{LinLiu,LS83}, see also \cite{ZWH} for the case with a regular potential. In the recent work \cite{HeWu2024}, the authors analyzed a more complicated model for incompressible two-phase flows with matched densities, in which mechanisms of chemotaxis, active transport, and reaction rate are taken into account. They proved the existence and uniqueness of global strong solutions subject to suitably small initial data, which also implies the Lyapunov stability of an energy minimizing equilibrium. Concerning the AGG model \eqref{AGG}--\eqref{BC-IC}, the existence of strong solutions is more challenging due to difficulties from the unmatched densities, even with constant coefficients $a,b$. In \cite{Gior21}, the author established the existence and uniqueness of strong solutions in the two-dimensional case, locally in time for bounded domains and globally in time in the case of periodic boundary conditions. Subsequently, \cite{Gior22} extended the local well-posedness result to the three-dimensional case. The global well-posedness in any generic two-dimensional bounded domain was established in \cite{AbelsGarckeGiorgini}.

Building on recent progress \cite{CGGS} concerning the Cahn--Hilliard equation with a non-constant gradient energy coefficient $a$ and a non-degenerate mobility $b$, we establish the existence and uniqueness of global strong solutions to the hydrodynamic problem \eqref{AGG}--\eqref{BC-IC} in three dimensions in a general setting, provided that the initial velocity $\mathbf{v}_0$ is sufficiently small and the initial phase-field variable $\varphi_0$ is a small perturbation of a local minimizer $\varphi_\ast$ of the free energy $E_{\mm{free}}$. Hence, the Lyapunov stability of the corresponding equilibrium follows.
The result is summarized in Theorem \ref{global-strong}.

To prove global existence, a natural way is to derive uniform-in-time estimates for the local strong solution so that it can be extended to the whole interval $[0,\infty)$. Based on the energy equality satisfied by the strong solution that is sufficiently smooth, we achieve our goal by using the {\L}ojasiewicz--Simon approach and, in particular, the argument inspired by \cite{LinLiu} for incompressible nematic liquid crystal flows (see \cite{CGGS,GGW,HeWu2024,ZhouDX} for further developments on phase-field models). Due to the energy balance, we see that the lower-order norms of the solution $(\mathbf{v},\varphi)$ in $\mathbf{L}^2(\Omega)\times H^1(\Omega)$ remain uniformly bounded as time evolves. However, its higher-order norms in $\mathbf{H}^1(\Omega)\times H^3(\Omega)$ can grow and may even explode in finite time. Applying a new type of \L ojasiewicz--Simon inequality (see Lemma \ref{LS}) combined with the energy equality \eqref{energy-equality}, we show that a small change in the total energy $\mathcal{E}$ guarantees a small change in the lower-order terms $\|\mathbf{v}\|_{L^2}$ and $\|\varphi-\varphi_\ast\|_{(H^1)'}$. By interpolation, this ensures  that the solution $(\mathbf{v}, \varphi)$ stays in a suitable small neighborhood of $(\bm{0},\varphi_\ast)$ in $\mathbf{L}^2(\Omega)\times H^2(\Omega)$, so that the total energy $\mathcal{E}$ does not drop ``too much'' along the trajectory. By induction, we can eventually extend the local strong solution with a fixed time step to obtain a unique global strong solution that is uniformly bounded for all $t\geq0$.

\textit{Plan of the paper}. In Section \ref{main results}, we first introduce the notation and necessary assumptions and then present the main results. Section \ref{section 3} is devoted to the proof of Theorem \ref{criterion} on the regularity criterion for the energy equality. In Section \ref{section 4}, we prove Theorem \ref{global-strong} about the existence and uniqueness of global strong solutions and Lyapunov stability. Finally, in the Appendix \ref{Appendix}, we provide some technical tools that have been used in this study.

%%%%%%%%%%%%%%%%%%%%%%%%%%%%%%
\section{Main Results}
\label{main results}

%In this section, we further introduce our mathematical results in detail.
\subsection{Preliminaries}

First, we introduce some necessary notation. Let $X$ be a (real) Banach space with the norm $\|\cdot\|_X$. Its dual space is denoted by $X'$, and the duality pairing is denoted by $\langle\cdot,\cdot\rangle_{X',X}$.
We use the bold letter $\mathbf{X}$ for a generic space of vectors or matrices, with each component belonging to $X$.
Given an interval $J\subseteq [0,\infty)$, $L^q(J;X)$ with $q\in[1,\infty]$ denotes the space consisting of Bochner measurable $q$-integrable/essentially bounded functions
with values in $X$.
For $q\in[1,\infty]$, $W^{1,q}(J;X)$ is the space of all $f\in L^q(J;X)$ with $\partial_t f\in L^q(J;X)$.
For $q=2$, we set $H^1(J;X)=W^{1,2}(J;X)$.
The set of continuous functions $f : J \to X$ is denoted by $C(J;X)$, and $BC(J; X)$ is the Banach space of all bounded and continuous $f : J \to X$ equipped with the supremum norm. Throughout the paper, we assume that $\Omega$ is a bounded domain in $\mathbb{R}^3$ with a sufficiently smooth boundary $\partial\Omega$.
For any $q\in [1,\infty]$, $L^q(\Omega)$ denotes the Lebesgue space with norm $\|\cdot\|_{L^q}$.
For $k\in\mathbb{Z}^+$, $q\in[1,\infty]$, $W^{k,q}(\Omega)$ denotes the Sobolev space with norm $\|\cdot\|_{W^{k,q}}$.
For $q=2$, we set $H^k(\Omega)=W^{k,2}(\Omega)$.
We also denote by $W^{k,q}_0(\Omega)$ the closure of $C_0^\infty(\Omega)$ in $W^{k,q}(\Omega)$ and set $W^{-k,q'}(\Omega)=(W^{k,q}_0(\Omega))'$ with $1/q+1/q'=1$.
Moreover, $f\in L^q_{\mathrm{loc}}([t,\infty);X)$ if
and only if $f\in L^q(t,s;X)$ for every $s>t$. The space $L^q_{\mathrm{uloc}}([t,\infty);X)$ consists of all measurable
functions $f:[t,\infty)\to X$ such that
\[\|f\|_{L^q_{\mathrm{uloc}}([t,\infty);X)}:=\sup_{s\geq t}\|f\|_{L^q(s,s+1;X)}<\infty.\]
We also define the generalized mean value over $\Omega$ of $f\in (H^1(\Omega))'$ as $ \overline{f}=|\Omega|^{-1}\langle f,1\rangle_{(H^1(\Omega))',H^1(\Omega)}$. If $f\in L^1(\Omega)$, then $\overline{f}=|\Omega|^{-1}\int_\Omega f\,\mathrm{d}x$.

Next, we introduce the Hilbert spaces of solenoidal vector-valued functions:
 \begin{align*}
 	&\mathbf{L}_\sigma^2(\Omega)=\big\{\mathbf{u}\in\mathbf{L}^2(\Omega)\,:\,
    \text{div}\,\mathbf{u}=0\text{ in a distributional sense in }\Omega,\ \mathbf{u}\cdot\mathbf{n}=0\text{ on }\partial\Omega\big\},\\
 	&\mathbf{H}_\sigma^1(\Omega)=\big\{\mathbf{u}\in\mathbf{H}^1(\Omega)\,:\,
    \text{div}\,\mathbf{u}=0\text{ in }\Omega,\ \mathbf{u}=\bm{0}\text{ on }\partial\Omega\big\}.
 \end{align*}
 We note that $\mathbf{L}_\sigma^2(\Omega)$ and $\mathbf{H}_\sigma^1(\Omega)$ correspond to the completion of $\mathbf{C}_{0,\sigma}^\infty(\Omega)$, namely the space of divergence-free vector fields in $\mathbf{C}_{0}^\infty(\Omega;\mathbb{R}^3)$, in the norm of $\mathbf{L}^2(\Omega)$ and $\mathbf{H}^1(\Omega)$, respectively.
 We use $(\cdot,\cdot)_{L^2}$ and $\|\cdot\|_{L^2}$ for the inner product and the norm in $\mathbf{L}^2_\sigma(\Omega)$, respectively. Thanks to Poincar\'{e}'s inequality, the space $\mathbf{H}_\sigma^1(\Omega)$ can be endowed with the inner product $(\mathbf{u},\mathbf{w})_{\mathbf{H}_\sigma^1}=(\nabla\mathbf{u},\nabla\mathbf{w})_{L^2}$ and the norm
 $\|\mathbf{u}\|_{\mathbf{H}_\sigma^1}=\|\nabla\mathbf{u}\|_{L^2}$.
 We recall the Korn inequality
 \begin{align}
 \|\nabla \mathbf{u}\|_{L^2}\leq \sqrt{2}\|\mathbb{D}\mathbf{u}\|_{L^2}\leq\sqrt{2}\|\nabla\mathbf{u}\|_{L^2} ,
 \quad\forall\, \mathbf{u}\in \mathbf{H}_\sigma^1(\Omega),\label{Korn}
 \end{align}
 which implies that $\|\mathbb{D}\mathbf{u}\|_{L^2}$ is a norm on $\mathbf{H}_\sigma^1(\Omega)$
 that is equivalent to $\|\mathbf{u}\|_{\mathbf{H}_\sigma^1}$.
 We introduce the space $\mathbf{H}_\sigma^2(\Omega)=\mathbf{H}^2(\Omega)\cap \mathbf{H}_\sigma^1(\Omega)$
 with inner product $(\mathbf{u},\mathbf{w})_{\mathbf{H}_\sigma^2}=(\mathbf{A}\mathbf{u},\mathbf{A}\mathbf{w})_{L^2}$
 and norm $\|\mathbf{u}\|_{\mathbf{H}_\sigma^2}=\|\mathbf{A}\mathbf{u}\|_{L^2}$,
 where $\mathbf{A}=\mathbb{P}_\sigma(-\Delta)$ is the Stokes operator
 and $\mathbb{P}_\sigma$ is the Leray projection from $\mathbf{L}^2(\Omega)$ onto $\mathbf{L}_\sigma^2(\Omega)$.
 There exists a constant $C>0$ such that
 $\|\mathbf{u}\|_{\mathbf{H}^2}\leq C\|\mathbf{u}\|_{\mathbf{H}_\sigma^2}$ for all $\mathbf{u}\in \mathbf{H}_\sigma^2(\Omega)$.

 Throughout this paper, we will use the following notation:
 \[\rho_\ast=\min\{\rho_1,\rho_2\},\quad\rho^\ast=\max\{\rho_1,\rho_2\},\quad\nu_\ast=\min\{\nu_1,\nu_2\},\quad\nu^\ast=\max\{\nu_1,\nu_2\}.\]
The symbols $C$, $C_i$ represent generic positive constants that can change from line to line. The specific dependence of these constants in terms of the data will be pointed out if necessary.

 %In this article, without loss of generality, we assume $\rho_1>\rho_2>0.$
 %
 % We denote by $\mathbf{A}^{-1}:(\mathbf{H}_\sigma^1(\Omega))'\to\mathbf{H}_\sigma^1(\Omega)$
 % the inverse map of the Stokes operator;
 % that is, given $\mathbf{f}\in (\mathbf{H}_\sigma^1(\Omega))'$,
 % there exists a unique $\mathbf{u}=\mathbf{A}^{-1}\mathbf{f}\in \mathbf{H}_\sigma^1(\Omega)$ such that
 % \[(\nabla\mathbf{A}^{-1}\mathbf{f},\nabla\mathbf{w})_{L^2}
 % =\langle\mathbf{f},\mathbf{w}\rangle_{(\mathbf{H}_\sigma^1(\Omega))',\mathbf{H}_\sigma^1(\Omega)},
 % \quad\forall \, \mathbf{w}\in \mathbf{H}_\sigma^1(\Omega).\]
 % As a consequence, it follows that
 % \[\|\mathbf{f}\|_{\sharp}:=\|\nabla\mathbf{A}^{-1}\mathbf{f}\|_{L^2}
 % =\langle\mathbf{f},\mathbf{A}^{-1}\mathbf{f}\rangle_{(\mathbf{H}_\sigma^1(\Omega))',\mathbf{H}_\sigma^1(\Omega)}\]
 % is an equivalent norm on $(\mathbf{H}_\sigma^1(\Omega))'$.

%%%%%%%%%%%%%%%%%%%%%%%%%%%%%%%%%%%%
\subsection{Main results}

We first report a result on the existence of global weak solutions to problem \eqref{AGG}--\eqref{BC-IC} on $[0,\infty)$ and the global regularity of weak solutions when the functions $a(\cdot)$ and $b(\cdot)$ are constants (cf. \cite{ADG2013,AbelsGarckeGiorgini}).
\begin{pro}\label{weak-solution}
Let $\Omega$ be a smooth bounded domain in $\mathbb{R}^3$. Assume that the gradient energy coefficient $a(\cdot)$ and the non-degenerate mobility $b(\cdot)$ are both positive constants (equal to one without loss of generality).
%and the assumptions $(\mathbf{A2})$--$(\mathbf{A5})$ hold.
For any given initial data $\mathbf{v}_0\in \mathbf{L}_\sigma^2(\Omega)$ and $\varphi_0\in H^1(\Omega)$
with $\|\varphi_0\|_{L^\infty(\Omega)}\leq1$ and $|\overline{\varphi_0}|<1$, the AGG model \eqref{AGG}--\eqref{BC-IC} with the Flory--Huggins potential \eqref{FH} admits a global weak solution $(\mathbf{v},\varphi)$ on $\Omega\times[0,\infty)$ such that
\begin{itemize}
    \item [(1)] Regularity: The global weak solution $(\mathbf{v},\varphi)$ possesses the regularity properties
    \begin{align*}
		&\mathbf{v}\in L^\infty(0,\infty;\mathbf{L}_\sigma^2(\Omega))\cap L^2(0,\infty;\mathbf{H}_\sigma^1(\Omega)),\\
		&\varphi\in L^\infty(0,\infty;H^1(\Omega))\cap L_{\mathrm{uloc}}^2([0,\infty);W^{2,6}(\Omega))
        \cap L_{\mathrm{uloc}}^4([0,\infty);H^2(\Omega)),\\
        &\varphi\in L^\infty(\Omega\times(0,\infty))\ \
        \text{with} \ \ |\varphi(x,t)|<1\ \ \text{a.e. in }\Omega\times(0,\infty),\\
        & \mu\in L_{\mathrm{uloc}}^2([0,\infty);H^1(\Omega)),\quad
        F'(\varphi)\in L_{\mathrm{uloc}}^2([0,\infty);L^6(\Omega)).
	\end{align*}
   % where  $p\in[2,+\infty)$ if $d=2$ and $p=6$ if $d=3$.

    \item [(2)] Weak formulations: The global weak solution $(\mathbf{v},\varphi)$ satisfies
    \begin{align}
		&-\int_0^{\infty}\int_\Omega \rho(\varphi)\mathbf{v}\cdot\partial_t \mathbf{w}\,\mathrm{d}x\,\mathrm{d}t
        +\int_0^{\infty}\int_\Omega\mathrm{div}(\rho(\varphi)\mathbf{v} \otimes\mathbf{v})\cdot\mathbf{w} \,\mathrm{d}x\,\mathrm{d}t
        +2\int_0^{\infty}\int_\Omega \nu(\varphi)\mathbb{D}\mathbf{v}:\mathbb{D}\mathbf{w}\,\mathrm{d}x \,\mathrm{d}t\notag\\
		&\quad-\int_0^{\infty}\int_\Omega(\mathbf{v} \otimes{\mathbf{J}}):\nabla\mathbf{w}\,\mathrm{d}x\,\mathrm{d}t
        =\int_0^{\infty}\int_\Omega\mu\nabla\varphi \cdot\mathbf{w}\,\mathrm{d}x\,\mathrm{d}t\label{0609-weak1}
	\end{align}
	for all $\mathbf{w}\in \mathbf{C}_0^\infty((0,\infty)\times\Omega)$ with $\mathrm{div}\,\mathbf{w}=0$,
	\begin{align}
		-\int_0^{\infty}\int_\Omega\varphi\partial_t\xi\,\mathrm{d}x\,\mathrm{d}t
        +\int_0^{\infty}\int_\Omega\mathbf{v} \cdot\nabla\varphi\xi\,\mathrm{d}x\,\mathrm{d}t
        =-\int_0^{\infty}\int_\Omega \nabla\mu\cdot\nabla\xi\,\mathrm{d}x \,\mathrm{d}t\notag%\label{0609-weak2}
	\end{align}
	for all $\xi\in C_0^\infty((0,\infty);C^1(\overline{\Omega}))$, and
	\begin{align}
		&\mu=-\Delta\varphi+\Psi'(\varphi),\quad\text{a.e. in }\Omega\times(0,\infty).\notag%\label{0609-weak3}
	\end{align}

    \item [(3)] Initial conditions: Almost everywhere in $\Omega$, it holds $(\mathbf{v},\varphi)|_{t=0}=(\mathbf{v}_0,\varphi_0)$.

    \item [(4)] Energy inequality: For all $t\in [s,\infty)$ and almost all $s\in [0,\infty)$ (including $s = 0$), it holds
    \begin{align}
		\mathcal{E}(t)+2\int_0^t\int_\Omega\nu(\varphi(s))|\mathbb{D}\mathbf{v}(s)|^2\,\mathrm{d}x\,\mathrm{d}s
        +\int_0^t\int_\Omega |\nabla\mu(s)|^2\,\mathrm{d}x\,\mathrm{d}s
        \leq \mathcal{E}(s).\label{energy-inequality}
	\end{align}

    \item [(5)] Instantaneous regularity of $(\varphi,\mu)$: For any $\tau>0$, we have
    \begin{align*}
        &\varphi\in L^\infty(\tau,\infty;W^{2,6}(\Omega)),
        \quad\partial_t\varphi\in L^2(\tau,\infty;H^1(\Omega)),\\
        &\mu\in L^\infty(\tau,\infty;H^1(\Omega))\cap L^2_{\mathrm{uloc}}([\tau,\infty);H^3(\Omega)),
        \quad F'(\varphi)\in L^\infty(\tau,\infty;L^6(\Omega)).
    \end{align*}

\item [(6)] Eventual regularity of $\mathbf{v}$: There exists a large time $T_R$ such that
\begin{align}
    \mathbf{v}\in L^\infty(T_R,\infty;\mathbf{H}_\sigma^1(\Omega))\cap L^2(T_R,\infty;\mathbf{H}_\sigma^2(\Omega))\cap H^1(T_R,\infty;\mathbf{L}_\sigma^2(\Omega)).
     \notag
     %\label{large-velocity}
\end{align}
\end{itemize}	
\end{pro}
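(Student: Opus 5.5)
This proposition collects known results, so I would prove it by assembling and adapting the arguments of \cite{ADG2013} and \cite{AbelsGarckeGiorgini}, not from scratch.

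\emph{Existence, items (1)--(4).} I would follow \cite{ADG2013}. First, approximate the problem by an implicit time discretization, or a Galerkin scheme for $\mathbf{v}$ coupled with a regularized potential $\Psi_\varepsilon$. Next, derive the discrete energy inequality using the structure $\mathrm{div}(\mathbf{v}\otimes\mathbf{J})$. The identity $\partial_t\rho(\varphi)+\mathrm{div}(\rho(\varphi)\mathbf{v}+\mathbf{J})=0$ is exactly what makes the kinetic energy bookkeeping close. The resulting uniform bounds give $\mathbf{v}\in L^\infty L^2\cap L^2H^1$, $\varphi\in L^\infty H^1$ and $\nabla\mu\in L^2L^2$. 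The Neumann problem $-\Delta\varphi+\Psi'(\varphi)=\mu$ with the Flory--Huggins potential, together with $|\overline{\varphi_0}|<1$ and mass conservation, controls $\overline{\mu}$ and hence $\mu\in L^2_{\mathrm{uloc}}H^1$. Elliptic estimates for the singular problem then give $\varphi\in L^2_{\mathrm{uloc}}W^{2,6}$ and $F'(\varphi)\in L^2_{\mathrm{uloc}}L^6$. Interpolating between $L^\infty H^1$ and $L^2 H^3$-type control yields $L^4_{\mathrm{uloc}}H^2$. For compactness, use Aubin--Lions for $\varphi$ and a time-shift estimate for $\mathbb{P}_\sigma(\rho\mathbf{v})$ to obtain strong $L^2$ convergence of $\mathbf{v}$. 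Finally, pass to the limit in \eqref{0609-weak1}; the energy inequality \eqref{energy-inequality} follows by lower semicontinuity for almost all $s$ and for $s=0$. Because the time horizon is infinite, the uniform-in-time bounds must be obtained from the energy, not from Gronwall.

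\emph{Instantaneous regularity, item (5).} I would treat the convective Cahn--Hilliard equation with the divergence-free drift $\mathbf{v}\in L^2H^1_\sigma\cap L^\infty L^2$ as given, following \cite{AbelsGarckeGiorgini}. Formally, test the time-differentiated equation with $\partial_t\varphi$, and multiply by $t$ (or use a cutoff $\min(t,1)$) to remove the dependence on initial regularity. This gives $\sup_t t\|\nabla\mu\|^2+\int t\|\nabla\partial_t\varphi\|^2\le C$. The drift terms are handled with $\|\mathbf{v}\|_{L^3}\|\nabla\varphi\|_{L^6}$-type bounds. The convexity of $\Psi_0$, namely $\Psi_0''\ge\Theta>0$, provides a good sign. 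To make this rigorous, I would work at the approximate level or use difference quotients, then pass to the limit. The bounds on $W^{2,6}$ and $F'(\varphi)\in L^\infty L^6$ follow from the elliptic theory for the singular problem, given $\mu\in L^\infty H^1$. $H^3$ regularity of $\mu$ in $L^2_{\mathrm{uloc}}$ then comes from $\Delta\mu=\partial_t\varphi+\mathbf{v}\cdot\nabla\varphi\in L^2_{\mathrm{uloc}}H^1$.

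\emph{Eventual regularity of $\mathbf{v}$, item (6).} Since the dissipation is integrable, for any $\delta>0$ there is a time $T_\delta$ at which $\|\nabla\mathbf{v}(T_\delta)\|_{L^2}^2$ and the energy gap $\mathcal{E}(T_\delta)-\lim_{t\to\infty}\mathcal{E}(t)$ are both small. Starting from such a time, I would construct a local strong solution, using the 3D local well-posedness of \cite{Gior22} adapted to bounded domains, which needs $\varphi(T_\delta)$ to be regular enough by (5). By weak--strong uniqueness, it coincides with the weak solution. A higher-order estimate for $\|\nabla\mathbf{v}\|^2$, obtained by testing with $\mathbf{A}\mathbf{v}$ or $\partial_t\mathbf{v}$, has the form $y'\le C y^3+g(t)$ with $g\in L^1(T_\delta,\infty)$ small. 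A continuity argument then keeps $y$ small globally, giving $L^\infty H^1\cap L^2H^2\cap H^1L^2$.

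\emph{Main obstacle.} The hardest step is item (6). Controlling $\partial_t(\rho(\varphi)\mathbf{v})$ and the flux term $\mathbf{J}\cdot\nabla\mathbf{v}$ in the higher-order estimate requires the uniform $H^3$ control of $\mu$ from (5) and a strict separation property for $\varphi$. I would establish separation first, in the spirit of \cite{AbelsGarckeGiorgini}, via a De Giorgi iteration or $L^\infty$ bounds on $F'(\varphi)$ for large times.
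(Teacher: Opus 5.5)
Your plan matches the paper's treatment. The paper does not prove Proposition~\ref{weak-solution}; it quotes it (cf.\ \cite{ADG2013,AbelsGarckeGiorgini}), with existence and the energy inequality from \cite{ADG2013} and the instantaneous and eventual regularity from \cite{AbelsGarckeGiorgini}, which is the assembly you propose.

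Two details of your sketch should be replaced by the standard arguments from those works. First, $\varphi\in L^4_{\mathrm{uloc}}([0,\infty);H^2(\Omega))$ does not come from $H^3$-control of $\varphi$, which is not available for the logarithmic potential. It comes from testing $\mu=-\Delta\varphi+\Psi'(\varphi)$ with $-\Delta\varphi$ and using $\Psi_0''>0$, which gives $\|\Delta\varphi\|_{L^2}^2\leq\|\nabla\mu\|_{L^2}\|\nabla\varphi\|_{L^2}+\Theta_0\|\nabla\varphi\|_{L^2}^2$. Second, in the estimate for $\|\nabla\mu\|_{L^2}$, the singular drift term $\int_\Omega(\mathbf{v}\cdot\nabla\varphi)\,\Psi''(\varphi)\,\partial_t\varphi\,\mathrm{d}x$ cannot be handled by $\|\mathbf{v}\|_{L^3}\|\nabla\varphi\|_{L^6}$-type bounds. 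Using $\mathrm{div}\,\mathbf{v}=0$ and $\mathbf{v}|_{\partial\Omega}=\mathbf{0}$, rewrite it as $-\int_\Omega\Psi'(\varphi)\,\mathbf{v}\cdot\nabla\partial_t\varphi\,\mathrm{d}x$ and control it through $\|\Psi'(\varphi)\|_{L^6}\leq C(1+\|\mu\|_{H^1})$.
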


\begin{rem}
Proposition \ref{weak-solution}-$(5)$ implies that $(\varphi,\mu)$ becomes a global strong solution for $t>0$, i.e.,
    \begin{align}
        &\partial_t\varphi +\mathbf{v}\cdot\nabla\varphi =\Delta\mu\quad \text{and}\ \quad
        \mu=-\Delta\varphi+\Psi'(\varphi), \qquad\text{a.e. in }\Omega\times(0,\infty),\label{strong1}
    \end{align}
 with homogeneous Neumann boundary conditions $\partial_\mathbf{n}\varphi=\partial_\mathbf{n}\mu=0$ almost everywhere on $\partial \Omega \times (0,\infty)$. This will be helpful in the later analysis for the energy equality, see, e.g., Lemma \ref{epsilon-limit-lemma-1}.
\end{rem}

%The first result is about the criterion on the energy equality and the second result is about the Lyapunov stability.
% To begin with, we make the following basic assumptions:
% \begin{itemize}
% 	\item[$\mathbf{(A1)}$] The mobility $m>0$ is constant and the viscosity $\nu\in W^{1,\infty}(\mathbb{R})$ satisfies
% 	\[0<\nu_\ast\leq \nu(s)\leq \nu^\ast,\quad\forall\, s\in \mathbb{R},\]
% 	where $\nu_\ast$, $\nu^\ast$ are positive constants.
% 	\item[$\mathbf{(A2)}$] The potential $\Psi(s):[-1,1]\to\mathbb{R}$ can be written as follows
%     \[ \Psi(s)=F(s)-\frac{\Theta_0}{2}s^2\quad\text{for all }s\in[-1,1]\]
%     with a given constant $\Theta_0>0$, where $F\in C([-1,1])\cap C^2(-1,1)$ is
%     such that
%     \[\lim_{s\to\pm1} \Psi'(s)=\pm\infty,\quad\Psi''(s)\geq \Theta\]
%     for all $s\in(-1,1)$ and a prescribed constant $\Theta\in(0,\Theta_0)$.
	% The function $F:[-1,1]\to\mathbb{R}$ satisfies $F\in C([-1,1])\cap C^4(-1,1)$ and
	% \begin{align*}
	% 	&\lim_{s\to-1}F'(s)=-\infty,\quad \lim_{s\to1}F'(s)=+\infty,\\
	% 	&F''(s)\geq\theta,\quad\forall\, s\in(-1,1),
	% \end{align*}
	% where $\theta_0$ and $\theta$ are positive constants.
% \end{itemize}
Our first result states a regularity criterion that ensures the energy equality \eqref{energy-equality}.
\begin{thm}
	\label{criterion}
	Let $(\mathbf{v},\varphi,\mu)$ be a global weak solution to problem \eqref{AGG}--\eqref{BC-IC} obtained in Proposition \ref{weak-solution}. Assume, in addition,
    \begin{align}
    \nabla\mathbf{v}\in L_{\mathrm{loc}}^q([0,\infty);\mathbf{L}^r(\Omega))\quad\text{and}\quad\mathbf{v}\in L_{\mathrm{loc}}^\frac{2q}{q-2}([0,\infty);\mathbf{L}^\frac{2r}{r-2}(\Omega)),
    \label{conditional-regularity}
    \end{align}
    as well as
    \begin{align}
        \partial_t\mathbf{v}\in L_{\mathrm{loc}}^{\frac{q}{q-1}}([0,\infty);\mathbf{L}^{{r}_\ast}(\Omega)),\quad \text{with}\ r_\ast:=
        \begin{cases}
             \ \dfrac{3r}{4r-3},&\text{for }r\in[2,3),\\[1mm]
             \ \widetilde{r}>1,&\text{for }r=3,\\
             \ 1,&\text{for }r>3,
        \end{cases}\label{conditional-regularity'}
    \end{align}
	where the exponents $q, r\geq 2$
	and when $q=2$ (resp. $r=2$), the expression $\frac{2q}{q-2}$ (resp. $\frac{2r}{r-2}$) should be understood as $\infty$. Then, the solution $(\mathbf{v}, \varphi, \mu)$ satisfies the following energy equality
\begin{align}
		\mathcal{E}(t)+2\int_0^t\int_\Omega\nu(\varphi(s))|\mathbb{D}\mathbf{v}(s)|^2\,\mathrm{d}x\,\mathrm{d}s
        +\int_0^t\int_\Omega |\nabla\mu(s)|^2\,\mathrm{d}x\,\mathrm{d}s
        = \mathcal{E}(0),\quad \forall\, t\geq 0.
        \label{energy}
	\end{align}	
\end{thm}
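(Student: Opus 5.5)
We split time into a short initial layer $[0,\tau]$ and the rest $[\tau,t]$, where the phase-field regularity of Proposition \ref{weak-solution}-(5) is available. On $[\tau,t]$ we aim for the exact balance
\[
\mathcal{E}(t)+2\int_\tau^t\!\!\int_\Omega\nu|\mathbb{D}\mathbf{v}|^2+\int_\tau^t\!\!\int_\Omega|\nabla\mu|^2=\mathcal{E}(\tau).
\]
We then let $\tau\to0$. Weak continuity of $(\mathbf{v},\varphi)$ in $\mathbf{L}^2\times H^1$ at $t=0$ gives $\liminf_{\tau\to0}\mathcal{E}(\tau)\ge\mathcal{E}(0)$. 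The energy inequality \eqref{energy-inequality} with $s=0$ gives $\limsup_{\tau\to0}\mathcal{E}(\tau)\le\mathcal{E}(0)$. Together these yield \eqref{energy} on $[0,t]$.

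The phase-field part is handled by the strong form \eqref{strong1}. Since $\partial_t\varphi\in L^2(\tau,T;H^1)$ and $\mu\in L^2(\tau,T;H^3)$, we may test the Cahn--Hilliard equation with $\mu$. Using $\varphi\in W^{2,6}$, $F'(\varphi)\in L^\infty(\tau,\infty;L^6)$ and a standard chain rule for the convex part, this gives
\[
E_{\rm free}(t)-E_{\rm free}(\tau)+\int_\tau^t\|\nabla\mu\|^2=-\int_\tau^t\!\!\int_\Omega\mathbf{v}\cdot\nabla\varphi\,\mu .
\]
This step uses no continuity of $\Psi''$; it relies only on the instantaneous regularity.

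The kinetic part is the core of the proof, and we handle it by mollification. Following \cite{CLWX2020}, we use a global mollifier in space and time together with a boundary cut-off $\eta_\delta$ that equals $1$ away from a $\delta$-layer of $\partial\Omega$. The test field is $\mathbf{w}=\eta_\delta\,(\mathbf{v}^\varepsilon)^\varepsilon$ (space-time mollification, preceded by a time cut-off on $[\tau,t]$). We correct its divergence, e.g.\ with a Bogovskii operator, so that it is admissible in \eqref{0609-weak1}. This is legitimate because, as noted in the introduction, \eqref{conditional-regularity}--\eqref{conditional-regularity'} imply $\partial_t(\rho\mathbf{v})\in L^{q'}_{\rm loc}\mathbf{W}^{-1,r'}$ and $p\in L^{q'}_{\rm loc}L^{r'}$ on $[\tau/2,\infty)$. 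The goal is to show the following identity:
\[
\int\!\!\int\rho\mathbf{v}\cdot\partial_t\mathbf{v}+\mathrm{div}(\rho\mathbf{v}\otimes\mathbf{v}+\mathbf{v}\otimes\mathbf{J})\cdot\mathbf{v}=E_{\rm kin}(t)-E_{\rm kin}(\tau)
\]
in the limit. This rests on the continuity equation $\partial_t\rho(\varphi)+\mathrm{div}(\rho\mathbf{v}+\mathbf{J})=0$, which follows from the $\varphi$-equation because $\rho$ is affine. Using it, the time-derivative and convective terms combine into $\frac{d}{dt}\int\rho|\mathbf{v}|^2/2$ plus commutator remainders.

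The remainders are estimated as follows.
\begin{itemize}
\item Convective commutators are bounded by $\|\nabla\mathbf{v}\|_{L^q_tL^r_x}\|\mathbf{v}\|^2_{L^{2q/(q-2)}_tL^{2r/(r-2)}_x}$, which is finite by \eqref{conditional-regularity}. The Friedrichs lemma then sends them to $0$ as $\varepsilon\to0$.
\item The commutator with $\rho(\varphi)$ is $[\rho,\ast]\partial_t\mathbf{v}$. We control it by $\partial_t\mathbf{v}\in L^{q'}L^{r_\ast}$, using $\nabla\rho(\varphi)\in L^\infty_tL^\infty_x$ from $\varphi\in W^{2,6}$. The exponent $r_\ast$ is chosen so that, by Sobolev embedding, it pairs with $\nabla\mathbf{v}\in L^qL^r$, i.e.\ $\mathbf{v}\in L^{3r/(3-r)}$ for $r<3$, $L^\infty$ for $r>3$, and the critical case $r=3$.
\item The flux terms involve $\mathbf{J}=-\frac{\rho_1-\rho_2}{2}\nabla\mu$ with $\nabla\mu\in L^\infty_tL^2_x\cap L^2_tW^{2,2}$, hence $\nabla\mu\in L^2_tL^\infty_x$. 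This is enough for $\mathbf{v}\otimes\mathbf{J}:\nabla\mathbf{w}$ to converge.
\item The viscous term and the capillary term $\mu\nabla\varphi\cdot\mathbf{w}$ converge directly. The latter cancels the transport term from the phase-field identity.
\end{itemize}

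We expect the main obstacle to be the boundary layer, i.e.\ the terms containing $\nabla\eta_\delta\sim\delta^{-1}$ supported in the $\delta$-strip. Examples are $\int\rho|\mathbf{v}|^2\mathbf{v}\cdot\nabla\eta_\delta$, the pressure term $\int p\,\mathbf{v}\cdot\nabla\eta_\delta$, and the flux term. First we would send $\varepsilon\to0$ with $\delta$ fixed. Then, for $\delta\to0$, we would use $\mathbf{v}=0$ on $\partial\Omega$ and Hardy's inequality, $\|\mathbf{v}/d(x)\|_{L^r}\le C\|\nabla\mathbf{v}\|_{L^r}$. This converts $\delta^{-1}|\mathbf{v}|$ on the strip into a quantity bounded by $|\nabla\mathbf{v}|$ times the strip's indicator. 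Absolute continuity of the integrals, with the same Hölder pairings as above, then makes these terms vanish as $\delta\to0$. For the pressure we use $p\in L^{q'}L^{r'}$ against $\mathbf{v}/d\in L^qL^r$.
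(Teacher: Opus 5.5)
Your proposal is correct and follows essentially the paper's route. The shared steps are:
\begin{itemize}
\item reconstructing the pressure $p\in L^{q'}(\tau/2,T;L^{r'}(\Omega))$ from $\partial_t(\rho(\varphi)\mathbf{v})\in L^{q'}(\tau/2,T;\mathbf{W}^{-1,r'}(\Omega))$;
\item testing with a mollified velocity under time and boundary cut-offs (the global mollifier);
\item testing the strong Cahn--Hilliard equation with $\mu$, using the chain rule for the singular part;
\item cancelling the density terms through $\partial_t\rho(\varphi)+\mathrm{div}(\rho(\varphi)\mathbf{v}+\mathbf{J})=0$;
\item removing the $\delta$-strip terms by Hardy's inequality and absolute continuity of the integrals;
\item obtaining right-continuity at $t=0$ from the energy inequality together with lower semicontinuity.
\end{itemize}

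The differences are minor.
\begin{itemize}
\item The Bogovskii correction is superfluous once you work with the pressure.
\item Your kinetic identity should involve $\partial_t(\rho\mathbf{v})\cdot\mathbf{v}$, not $\rho\mathbf{v}\cdot\partial_t\mathbf{v}$; with the latter the terms do not telescope.
\item The paper controls the time commutator via $\partial_t\varphi\in L^2(H^1)$ and $\mathbf{v}\in L^4(\mathbf{L}^4)$, and uses the hypothesis on $\partial_t\mathbf{v}$ only for the pressure. Your direct pairing of $\partial_t\mathbf{v}\in L^{q'}(\mathbf{L}^{r_\ast})$ with $\mathbf{v}\in L^q(\mathbf{L}^{r_\ast'})$ also works.
\item That same pairing yields $\mathbf{v}\in C([\tau,T];\mathbf{L}^2)$. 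This is needed to remove the time cut-off at the endpoint $t$; the paper addresses it explicitly, while your sketch leaves it implicit.
\end{itemize}
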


\begin{rem}
Extensions of Theorem \ref{criterion} to the general case that involves a non-constant gradient energy coefficient $a$ and/or a non-degenerate mobility $b$, remain open.
\end{rem}

\begin{rem}
The condition $\mathbf{v}\in L_{\mathrm{loc}}^\frac{2q}{q-2}([0,\infty);\mathbf{L}^\frac{2r}{r-2}(\Omega))$ in \eqref{conditional-regularity} with $q,r\geq 2$ and the condition \eqref{conditional-regularity'} on $\partial_t \mathbf{v}$ are mainly due to difficulties from the unmatched densities. When $\rho_1\neq \rho_2$, the assumption on $\mathbf{v}$ in \eqref{conditional-regularity} together with the fact $\nabla \mu\in L^2(0,\infty,\mathbf{L}^2(\Omega))$ indicates that $\mathbf{v}\otimes\mathbf{J}\in L^{q'}_{\mathrm{loc}}([0,\infty);\mathbf{L}^{r'}(\Omega))$ with $1/q+1/q'=1$, $1/r+1/r'=1$. It also implies that $\mathbf{v}\in L^{2q'}_{\mathrm{loc}}([0,\infty);\mathbf{L}^{2r'}(\Omega))$, and thus $\rho(\varphi)\mathbf{v}\otimes\mathbf{v}\in L^{q'}_{\mathrm{loc}}([0,\infty);\mathbf{L}^{r'}(\Omega))$ due to the boundedness of $\rho(\varphi)$. These observations, together with \eqref{conditional-regularity'} and Proposition \ref{weak-solution}-$(5)$, enable us to reconstruct the pressure with regularity $p\in L^{q'}_{\mathrm{loc}}([\tau/2,\infty);L^{r'}(\Omega))$ for all $\tau>0$ and establish the refined weak formulation \eqref{refined-weak-form}; see Section \ref{app-for} for more details.
They also help us handle the influence of boundaries, especially when taking the limit as $\delta\to0$, where $\delta>0$ is a parameter related to the thickness of the cut-off at the boundary (see \eqref{delta-limit-2}).
On the other hand, by exploring interpolation inequalities, we find that \eqref{conditional-regularity} can be replaced by either $\nabla \mathbf{v}\in L^4_{\mathrm{loc}}([0,\infty);\mathbf{L}^\frac{12}{5}(\Omega))$ or
$\nabla \mathbf{v}\in L_{\mathrm{loc}}^\frac{10}{3}([0,\infty);\mathbf{L}^3(\Omega))$, both of which involve conditions only on $\nabla \mathbf{v}$. This is because the aforementioned condition on $\nabla \mathbf{v}$ together with the property $\mathbf{v}\in L^\infty(0,\infty;\mathbf{L}_\sigma^2(\Omega))$ is sufficient to control the condition on $\mathbf{v}$ in \eqref{conditional-regularity}.
\end{rem}
\begin{rem}
Returning to the case with matched densities, i.e., $\rho_1=\rho_2$, the condition involving the velocity gradient $\nabla\mathbf{v}$ in \eqref{conditional-regularity} alone is sufficient to ensure the energy equality, if
\begin{align}
		\dfrac{5}{q}+\dfrac{6}{r}=5,\quad \text{if }  2\leq  r\leq \dfrac{12}{5}; \quad \text{or } \ \ \ q\geq 2,\quad \text{if }r=3.
        \label{index}
\end{align}
Indeed, when $\rho_1=\rho_2$, the additional flux term $\mathbf{v}\otimes\mathbf{J}$ in the momentum equation vanishes. Consequently, the condition \eqref{index} together with the weak formulation \eqref{0609-weak1} yields $\partial_t\mathbf{v}=\partial_t\mathbb{P}_\sigma\mathbf{v}\in L_{\mathrm{loc}}^{q'}([0,\infty);\mathbf{W}^{-1,r'}(\Omega))$.  This regularity property is enough to reconstruct the pressure $p$ and establish a suitable weak formulation. The condition \eqref{index} corresponds to subcases of the condition proposed in \cite{LNS} for the ``Model H'' on the torus $\mathbb{T}^3$, and the requirement $q,r\geq 2$ is kept to handle the pressure term and the boundary effects of a general domain $\Omega$. The condition in \cite{LNS} is analogous to that in \cite{BC2020}, where the authors investigated the three-dimensional Navier--Stokes equations with a no-slip boundary condition and established regularity criteria for the energy equality in terms of the velocity gradient. We also note that in \cite{LNS}, two additional crucial assumptions such as $\Psi''\in C([-1,1])$ and $\mathbf{v}\in C([0,\infty);\mathbf{L}_\sigma^2(\Omega))$ were imposed to obtain the energy equality, which are not necessary in our argument.
\end{rem}

To study the Lyapunov stability of steady states, we shall work within the framework of global strong solutions. This allows us to handle a more general scenario with non-constant coefficients $a,b$. In this context, we impose the following hypotheses:
\begin{itemize}
    \item [$\mathbf{(H)}$] The gradient energy coefficient $a:[-1,1]\to(0,\infty)$ satisfies
    \[a\in C^{2}([-1,1]):\ 0<a_\ast\leq a(s)\leq a^\ast,\quad\forall \,s\in [-1,1],\]
    and the non-degenerate mobility $b:[-1,1]\to (0,\infty)$ satisfies
    \[b\in C^{2}([-1,1]):\ 0<b_\ast\leq b(s)\leq b^\ast,\quad\forall \,s\in [-1,1].\]
\end{itemize}
\noindent Let us consider the following nonlocal elliptic boundary value problem:
\begin{align}
	\begin{cases}
		-\text{div}(a(\psi)\nabla\psi)+\dfrac{a'(\psi)}{2}|\nabla\psi|^2+\Psi'(\psi)
        =\overline{\dfrac{a'(\psi)}{2}|\nabla\psi|^2+\Psi'(\psi)},&\text{in }\Omega,\\
		\partial_{\mathbf{n}}\psi=0,&\text{on }\partial\Omega,
	\end{cases}\label{station}
\end{align}
subject to the constraint $\overline{\psi}=\overline{\varphi_0}$.
For any $m\in(-1,1)$, we denote the set of steady states as
\[
\mathcal{S}_m=\big\{\psi\in H^2(\Omega)\ \text{with}\ \Psi'(\psi)\in L^2(\Omega): \psi\text{ solves }\eqref{station} \text{ and }\overline{\psi}=m\big\}.
\]
Next, given the set
\[
\mathcal{V}_m=\big\{\psi\in H^{1}(\Omega):\|\psi\|_{L^\infty}\leq 1,\, \overline{\psi}=m\big\},
\]
we say that a function $\psi _\ast\in \mathcal{V}_m$ is a local minimizer of the free energy $E_{\mathrm{free}}$ in $\mathcal{V}_m$,
if there exists a positive constant $\lambda_\ast$ such that
for any $\psi\in \mathcal{V}_m$ satisfying $\|\psi-\psi _\ast\|_{H^1}\leq \lambda_\ast$, it holds
	\begin{align}
		E_{\mathrm{free}}(\psi _\ast)\leq E_{\mathrm{free}}(\psi).\notag%\label{local}
	\end{align}
According to \cite[Lemma 10.1]{CGGS}, we have $\psi_\ast\in\mathcal{S}_m$.
\smallskip

We now state the second result of this study on the existence and uniqueness of global strong solutions and Lyapunov stability of energy minimizing steady states.
\begin{thm}
	\label{global-strong}
Let $\Omega\subset\mathbb{R}^3$ be a smooth bounded domain, and let the assumption $\mathbf{(H)}$ be satisfied.
Suppose that $a(\cdot)$ is real analytic on the open interval $(-1,1)$, $m\in(-1,1)$
and $\varphi _\ast\in \mathcal{V}_m$ is a local minimizer of $E_{\mathrm{free}}$. Moreover, assume that the initial data $(\mathbf{v}_0,\varphi _0)\in \mathbf{H}_\sigma^1(\Omega)\times (\mathcal{V}_m\cap H^2(\Omega))$ satisfies
    \begin{align}
       \partial_\mathbf{n}\varphi_0=0\ \text{on }\partial\Omega\quad\text{and}\quad\Big\|-\mathrm{div}(a(\varphi_0)\nabla\varphi_0)+a'(\varphi_0)\frac{|\nabla\varphi_0|^2}{2}+\Psi'(\varphi_0)\Big\|_{H^1}\leq M,\label{initial-mu0}
	\end{align}
for some constant $M>0$. For any $\epsilon>0$, there exist constants $\eta_1$, $\eta_2\in(0,1)$,
depending on $\epsilon$ and $M$, such that if
	\begin{align*}
		&\|\mathbf{v}_0\|_{L^2}\leq \eta_1,\quad\|\varphi _0-\varphi _\ast\|_{H^2}\leq \eta_2,
	\end{align*}
then problem \eqref{AGG}--\eqref{BC-IC} admits a unique global strong solution $(\mathbf{v},\varphi,\mu,p)$ satisfying
	\begin{align*}
		&\mathbf{v}\in BC([0,\infty);\mathbf{H}^1_\sigma(\Omega))
        \cap L_{\mathrm{uloc}}^2([0,\infty);\mathbf{H}^2_\sigma(\Omega))
        \cap H_{\mathrm{uloc}}^1([0,\infty);\mathbf{L}^2_\sigma(\Omega)),\\
		&\varphi\in L^\infty(0,\infty;H^{3}(\Omega)),
        \quad\partial_t\varphi\in L^\infty(0,\infty;(H^{1}(\Omega))')\cap L_{\mathrm{uloc}}^2([0,\infty);H^1(\Omega)),\\
        %&\varphi\in L^\infty(\Omega\times(0,T_M))\ \text{such that}\ |\varphi(x,t)|<1\ \text{a.e. in }\Omega\times(0,T_M), \\
		&\mu\in L^\infty(0,\infty;H^1(\Omega))\cap L_{\mathrm{uloc}}^2([0,\infty);H^3(\Omega)),
        \quad p\in L_{\mathrm{uloc}}^2([0,\infty);H^1(\Omega)).
	\end{align*}
Furthermore, it holds
	\[\|\varphi(t)-\varphi _\ast\|_{H^2}\leq \epsilon,\quad\forall\,t\geq0,\]
    which entails the Lyapunov stability of the local minimizer $\varphi_*$.
\end{thm}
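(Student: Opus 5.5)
The plan is to combine a local well-posedness result for strong solutions with uniform estimates, extending the solution step by step. The extension is controlled by a Łojasiewicz--Simon inequality near $\varphi_\ast$ together with the energy equality for strong solutions.

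First I would establish local existence and uniqueness of strong solutions in the general setting $\mathbf{(H)}$ with a quantitative existence time. Given $(\mathbf{v}_0,\varphi_0)$ with $\|\mathbf{v}_0\|_{H^1}\le K_1$, $\|\mu_0\|_{H^1}\le K_2$ and $\overline{\varphi_0}=m$, a strong solution should exist on $[0,T_0]$, with $T_0>0$ depending only on $K_1,K_2,m$. On $[0,T_0]$ the norms of $\mathbf{v}$ in $L^\infty(\mathbf{H}^1)\cap L^2(\mathbf{H}^2)\cap H^1(\mathbf{L}^2)$ and of $\mu$ in $L^\infty(H^1)\cap L^2(H^3)$ should be bounded by $C(K_1,K_2)$. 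The tools are as follows:
\begin{itemize}
\item a Galerkin or fixed-point scheme, following \cite{Gior22} for the unmatched-density Navier--Stokes part;
\item the Cahn--Hilliard theory with non-constant $a,b$ and singular potential from \cite{CGGS};
\item the strict separation property, which $H^1$-control of $\mu$ gives in 3D for the Flory--Huggins potential, again via \cite{CGGS}.
\end{itemize}
Uniqueness would follow from a standard difference estimate in lower-order norms. These strong solutions are regular enough to test the momentum equation with $\mathbf{v}$ and the Cahn--Hilliard equation with $\mu$. This yields the energy equality \eqref{energy-equality} on the interval of existence.

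Second, I would set up a Łojasiewicz--Simon inequality adapted to the free energy with analytic $a$ and singular $\Psi$, near a local minimizer (Lemma \ref{LS}). Because $\varphi_\ast$ is a local minimizer, this should give
\[
|E_{\mathrm{free}}(\psi)-E_{\mathrm{free}}(\varphi_\ast)|^{1-\theta}\le C\|\mu_\psi-\overline{\mu_\psi}\|_{(H^1)'}
\]
and $E_{\mathrm{free}}(\psi)\ge E_{\mathrm{free}}(\varphi_\ast)$ for $\psi$ close to $\varphi_\ast$ in $H^2$, with $\overline\psi=m$. Define $Y(t)=\mathcal{E}(t)-E_{\mathrm{free}}(\varphi_\ast)\ge0$ while the solution remains in this neighbourhood. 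The energy equality gives
\[
-\frac{d}{dt}Y^\theta\ \ge\ c\,\big(\|\mathbb{D}\mathbf{v}\|_{L^2}+\|\nabla\mu\|_{L^2}\big).
\]
Integrating, and using $\|\partial_t\varphi\|_{(H^1)'}\le C(\|\nabla\mu\|_{L^2}+\|\mathbf{v}\|_{L^2})$, yields
\[
\|\varphi(t)-\varphi_0\|_{(H^1)'}\le C\,Y(0)^\theta+\text{(short-time terms)}.
\]
Meanwhile $\|\mathbf{v}(t)\|_{L^2}^2\le CY(t)\le CY(0)$, and $Y(0)$ is small by $\eta_1,\eta_2$ and continuity of $E_{\mathrm{free}}$ in $H^2$.

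Third comes the continuation argument, which I expect to be the main obstacle. The higher norms must stay uniformly bounded, so that $T_0$ does not shrink, while $\varphi$ stays $H^2$-close to $\varphi_\ast$. Let $T^\ast$ be the maximal time on which
\begin{itemize}
\item $\|\varphi(t)-\varphi_\ast\|_{H^2}\le\epsilon$, with $\epsilon$ below the Łojasiewicz radius;
\item $\|\mathbf{v}(t)\|_{H^1}\le K_1$;
\item $\|\mu(t)\|_{H^1}\le K_2$.
\end{itemize}
On $[0,T^\ast)$ I would derive higher-order differential inequalities for $\|\nabla\mathbf{v}\|^2_{L^2}+\|\nabla\mu\|_{L^2}^2$ of the form $\frac{d}{dt}\Lambda+\mathcal{D}\le C(K)(\|\mathbb{D}\mathbf{v}\|^2+\|\nabla\mu\|^2)\Lambda$. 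Since the right-hand coefficient is integrable, with total bounded by $Y(0)$, Gronwall gives $\Lambda(t)\le C\Lambda(0)+CY(0)$. This recovers the bounds with constants depending only on $M$, after choosing $K_1,K_2$ accordingly. For the phase field, interpolate between the small $(H^1)'$ distance and the bounded $H^3$ norm, which elliptic regularity gives from $\mu\in H^1$ plus separation:
\[
\|\varphi-\varphi_\ast\|_{H^2}\le C\|\varphi-\varphi_\ast\|_{(H^1)'}^{\alpha}\|\varphi-\varphi_\ast\|_{H^3}^{1-\alpha}.
\]
This strictly improves the $\epsilon$ bound once $\eta_1,\eta_2$ are small. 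Hence $T^\ast=\infty$, since otherwise local existence with a fixed step $T_0$ would extend the solution. The delicate point is the nonlinear flux $\mathbf{J}$ and the $\varphi$-dependent coefficients in the $H^1$ estimate for $\mathbf{v}$; there I would try to absorb terms using the small $L^2$ norm of $\mathbf{v}$ and the dissipation, working with $\partial_t\mathbf{v}$ via testing by $\mathbb{P}_\sigma\partial_t\mathbf{v}$ as in \cite{Gior22}. Uniqueness of the global solution follows from local uniqueness, and the uniform $H^2$ bound is exactly the Lyapunov stability claim.
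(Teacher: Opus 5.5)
Your outline works, and its low-order half matches the paper. The Łojasiewicz--Simon inequality plus the energy identity give $\int_0^t\|\partial_t\varphi\|_{(H^1)'}\,\mathrm{d}s\lesssim Y(0)^\theta$, and interpolating $H^2$ between $(H^1)'$ and a bounded $H^3$ norm keeps $\varphi$ close to $\varphi_\ast$ (cf. \eqref{b17}--\eqref{b19}). The $L^2$ version in Lemma~\ref{LS} is enough, since you bound by $\|\nabla\mu\|_{L^2}$ anyway.

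The real difference is how you keep the higher-order norms bounded for all time. The paper uses only the crude local inequality $\frac{\mathrm d}{\mathrm dt}\widetilde{\mathcal H}\le C(1+\widetilde{\mathcal H}^5)$. Because of its constant forcing, this gives $\widetilde{\mathcal H}\le 2M_1$ only on a fixed window $[0,T_2]$. The dissipation then yields $\int_0^{T_2}\widetilde{\mathcal H}\,\mathrm dt\le 2E_0=\tfrac12\min\{1,M_1\}T_2$. The paper picks a good time $t^\ast\in[T_2/2,T_2]$ with $\widetilde{\mathcal H}(t^\ast)\le\min\{1,M_1\}$, which restores $\|\mathbf v(t^\ast)\|_{H^1},\|\mu(t^\ast)\|_{H^1}\le M$, and restarts (the device of \cite{LinLiu}).

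Your continuity argument instead needs a higher-order inequality whose right-hand side is controlled by the time-integrable dissipation. You cannot quote this from \cite{Gior22,CGGS}; you must re-derive it. It does hold, because every term vanishes at equilibria once written properly:
\begin{itemize}
\item $\int_\Omega\mu\nabla\varphi\cdot\partial_t\mathbf v\,\mathrm dx=-\int_\Omega\varphi\nabla\mu\cdot\partial_t\mathbf v\,\mathrm dx$;
\item $\int_\Omega\mu\nabla\partial_t\varphi\cdot\mathbf v\,\mathrm dx=\int_\Omega(\mu-\overline{\mu})\nabla\partial_t\varphi\cdot\mathbf v\,\mathrm dx$;
\item $\Theta_0\|\partial_t\varphi\|_{L^2}^2\le\tilde\epsilon\|\nabla\partial_t\varphi\|_{L^2}^2+C\|\partial_t\varphi\|_{(H^1)'}^2$, with $\|\partial_t\varphi\|_{(H^1)'}\lesssim\|\nabla\mu\|_{L^2}+\|\mathbf v\|_{L^2}$.
\end{itemize}
With your bounds $K_1,K_2$ and the resulting $H^3$ bound on $\varphi$, this gives $\frac{\mathrm d}{\mathrm dt}\widetilde{\mathcal H}+c\,\mathcal D\le C(K)\big(\|\nabla\mathbf v\|_{L^2}^2+\|\nabla\mu\|_{L^2}^2\big)$.

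Three corrections:
\begin{itemize}
\item The form you wrote, with an extra factor $\Lambda$ on the right, is too optimistic: the coupling terms above are only linear in the dissipation. Integrating the correct form still gives your conclusion $\Lambda(t)\le C\Lambda(0)+C(K)Y(0)$.
\item You must run the estimate on $\widetilde{\mathcal H}$, including the cross term $\beta_\ast\int_\Omega\mu\mathbf v\cdot\nabla\varphi\,\mathrm dx$, since testing with $\partial_t\mathbf v$ and $\partial_t\mu$ produces its time derivative. By \eqref{b12}, $\widetilde{\mathcal H}$ is equivalent to $\Lambda$.
\item Bootstrap on $\min\{\epsilon,\kappa,\chi,\xi/(3C_{\mathrm S})\}$ rather than on $\epsilon$. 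This keeps the LS inequality, local minimality, strict separation, the $H^3$ bound and local uniqueness valid along the trajectory.
\end{itemize}

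In short, your route gives uniform bounds in one continuity step, at the cost of a sharper estimate that vanishes at equilibrium. The paper's route tolerates non-vanishing forcing and reuses the local-existence estimates verbatim, at the cost of iterating over windows of length $T_2/2$.
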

 Finally, we have the following characterization on the long-time behavior of global strong solutions:
 \begin{cor}
 	\label{conv-equ}
    Let $(\mathbf{v},\varphi,\mu)$ be the global strong solution to problem \eqref{AGG}--\eqref{BC-IC} constructed in Theorem \ref{global-strong}. Then, there exists a unique equilibrium  $\varphi_\infty\in H^3(\Omega)\cap \mathcal{S}_m$ such that
 	\[
    \lim_{t\to \infty}\Big(\|\mathbf{v}(t)\|_{H^1}+\|\varphi(t)-\varphi _\infty\|_{H^2}\Big)=0.
    \]
 	In addition, for some $\theta\in (0,1/2)$ depending on $\varphi _\infty$, it holds
 	\[\|\mathbf{v}(t)\|_{L^2} +\|\varphi(t)-\varphi _\infty\|_{H^1}\leq C (1+t)^{-\frac{\theta}{1-2\theta}},\quad\forall\, t\geq 0,\]
    where $C>0$ depends on $\|\mathbf{v}_0\|_{H^1}$, $\|\varphi _0\|_{H^2}$, $\|\varphi _\ast\|_{H^2}$, $m$, $M$ and $\Omega$.
\end{cor}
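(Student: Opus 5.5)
The plan is the standard \L ojasiewicz--Simon argument for gradient-like systems near a stable state. Theorem \ref{global-strong} gives uniform bounds and precompactness, the energy equality gives integrability of the dissipation, and the \L ojasiewicz--Simon inequality (Lemma \ref{LS}) converts these into convergence with a rate.

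\textbf{Step 1: precompactness and the $\omega$-limit set.} By Theorem \ref{global-strong}, $\varphi\in L^\infty(0,\infty;H^3(\Omega))$ and $\mathbf{v}\in BC([0,\infty);\mathbf{H}^1_\sigma(\Omega))$. Hence the trajectory $\{\varphi(t)\}$ is precompact in $H^2(\Omega)$. The solution also stays in an $\epsilon$-neighbourhood of $\varphi_\ast$ in $H^2$, so $\varphi$ is uniformly strictly separated from $\pm1$ (by the embedding $H^2\hookrightarrow C(\overline\Omega)$ and the separation of $\varphi_\ast$).

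The energy equality \eqref{energy-equality} holds for strong solutions. Since $\mathcal{E}$ is nonincreasing and bounded below, it converges to some $\mathcal{E}_\infty$. It also gives
\[
\int_0^\infty\big(\|\mathbb{D}\mathbf{v}\|_{L^2}^2+\|\nabla\mu\|_{L^2}^2\big)\,\mathrm{d}t<\infty .
\]
I then show $\|\mathbf{v}(t)\|_{H^1}\to0$ via a differential inequality for $\|\nabla\mathbf{v}\|_{L^2}^2$. The uniform-in-time higher-order estimates from the proof of Theorem \ref{global-strong} give $\frac{\mathrm{d}}{\mathrm{d}t}\|\nabla \mathbf{v}\|^2\le C$ in the $L^1_{\mathrm{uloc}}$ sense. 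Combined with integrability of $\|\nabla\mathbf{v}\|^2$, this yields decay. Similarly, $\|\nabla\mu\|_{L^2}\to0$, using $\mu\in L^2_{\mathrm{uloc}}(H^3)$ and $\partial_t\varphi\in L^2_{\mathrm{uloc}}(H^1)$ to bound the time derivative of $\|\nabla\mu\|^2$.

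Consequently every $\omega$-limit point $\varphi_\infty$ in $H^2$ solves \eqref{station} with mean $m$, so $\varphi_\infty\in\mathcal{S}_m$. Its $H^3$ regularity follows from the uniform $H^3$ bound and weak lower semicontinuity.

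\textbf{Step 2: uniqueness of the limit and the rate.} Fix an $\omega$-limit point $\varphi_\infty$. On the $H^2$-neighbourhood of $\varphi_\infty$ where the trajectory eventually lives, apply Lemma \ref{LS}, which uses the analyticity of $a$ on $(-1,1)$ and of $\Psi$ there. It gives
\[
|E_{\mathrm{free}}(\varphi)-E_{\mathrm{free}}(\varphi_\infty)|^{1-\theta}\le C\|\mu-\overline{\mu}\|_{(H^1)'}\quad\text{or}\quad \le C\|\nabla\mu\|_{L^2},
\]
with $\theta\in(0,1/2)$.

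Set $H(t)=\mathcal{E}(t)-E_{\mathrm{free}}(\varphi_\infty)\ge0$. For the kinetic part, $\|\mathbf{v}\|_{L^2}^{2(1-\theta)}\le C\|\mathbf{v}\|_{L^2}\le C\|\mathbb{D}\mathbf{v}\|_{L^2}$, using $2(1-\theta)>1$, smallness, and Korn--Poincar\'e. Combining,
\[
-\frac{\mathrm{d}}{\mathrm{d}t}H^\theta=\theta H^{\theta-1}\big(2\|\sqrt{\nu}\mathbb{D}\mathbf{v}\|^2+\|\sqrt b\nabla\mu\|^2\big)\ge c\big(\|\mathbb{D}\mathbf{v}\|_{L^2}+\|\nabla\mu\|_{L^2}\big).
\]
Integrating gives $\nabla\mu\in L^1(t_0,\infty;L^2)$. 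The equation $\partial_t\varphi=\mathrm{div}(b\nabla\mu)-\mathbf{v}\cdot\nabla\varphi$ then yields $\|\partial_t\varphi\|_{(H^1)'}\le C(\|\nabla\mu\|+\|\mathbf{v}\|_{L^2})\in L^1$. A standard continuity argument ensures the trajectory never leaves the \L ojasiewicz neighbourhood. Thus $\varphi(t)$ converges in $(H^1)'$, hence in $H^2$ by precompactness, to the unique $\varphi_\infty$.

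For the rate, the inequality $H'\le -cH^{2(1-\theta)}$ gives $H(t)\le C(1+t)^{-1/(1-2\theta)}$. Then
\[
\int_t^\infty\|\partial_t\varphi\|_{(H^1)'}\le CH(t)^\theta\le C(1+t)^{-\theta/(1-2\theta)} ,
\]
and $\|\mathbf{v}\|_{L^2}\le CH^{1/2}$ decays at least as fast. Interpolating between $(H^1)'$ and the uniform $H^3$ bound controls $H^1$. The sharper $H^1$ rate follows from testing the difference of the chemical potentials: I will bound $\|\varphi-\varphi_\infty\|_{H^1}^2$ by $C(\|\mu-\mu_\infty\|_{(H^1)'}\|\varphi-\varphi_\infty\|_{H^1}+\dots)$ using strict separation and the local-minimizer coercivity, or simply use $E_{\mathrm{free}}(\varphi)-E_{\mathrm{free}}(\varphi_\infty)$ control.

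\textbf{Main obstacle.} The main obstacle is Step 1's decay of $\|\nabla \mathbf{v}\|_{L^2}$ and $\|\nabla\mu\|_{L^2}$. This needs a usable differential inequality with uniform-in-time constants from the global strong-solution bounds. I would reuse the higher-order estimates established in the proof of Theorem \ref{global-strong}.
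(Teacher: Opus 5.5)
Your overall route is the one the paper has in mind. The paper omits the proof and refers to the standard \L ojasiewicz--Simon argument built on the energy identity and the uniform bounds of Theorem~\ref{global-strong}. The core of your Step~2, leading to $H(t)\le C(1+t)^{-1/(1-2\theta)}$ and $\|\varphi(t)-\varphi_\infty\|_{(H^1)'}\le CH(t)^\theta$, is sound. Two steps, however, do not work as written.

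\emph{Decay of $\|\nabla\mathbf{v}\|_{L^2}$ and $\|\nabla\mu\|_{L^2}$.} Your bounds on $\frac{\mathrm{d}}{\mathrm{d}t}\|\nabla\mathbf{v}\|_{L^2}^2$ and $\frac{\mathrm{d}}{\mathrm{d}t}\|\nabla\mu\|_{L^2}^2$ hold only in $L^1_{\mathrm{uloc}}$. That is all you get from $\mathbf{v}\in L^2_{\mathrm{uloc}}(\mathbf{H}^2_\sigma)\cap H^1_{\mathrm{uloc}}(\mathbf{L}^2_\sigma)$, and from $\mu\in L^2_{\mathrm{uloc}}(H^3)$, $\partial_t\varphi\in L^2_{\mathrm{uloc}}(H^1)$. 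Such a bound, combined with integrability on $(0,\infty)$, does not force decay. Triangular bumps of height $1$ and width $2^{-n}$ centred at $t=n$ are integrable and have derivative bounded in $L^1_{\mathrm{uloc}}$, yet do not tend to zero.

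You need a one-sided bound that is uniform on short intervals. The natural object is the coupled functional $\widetilde{\mathcal H}$ of \eqref{n2}; the cross term $\int_\Omega\mu\mathbf{v}\cdot\nabla\varphi\,\mathrm{d}x$ is why the two norms are not treated separately. The separation and energy bounds are uniform in time, so \eqref{D1} holds on $(0,\infty)$. Since $\widetilde{\mathcal H}$ is bounded, this gives $\frac{\mathrm{d}}{\mathrm{d}t}\widetilde{\mathcal H}\le C$ a.e. Together with $0\le\widetilde{\mathcal H}\le C(\|\nabla\mathbf{v}\|_{L^2}^2+\|\nabla\mu\|_{L^2}^2)\in L^1(0,\infty)$, you get $\widetilde{\mathcal H}(t)\le\frac1r\int_{t-r}^t\widetilde{\mathcal H}\,\mathrm{d}s+Cr\to0$. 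Then \eqref{b12} yields $\|\mathbf{v}(t)\|_{H^1}+\|\nabla\mu(t)\|_{L^2}\to0$.

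\emph{The rates.} Your bound $\|\mathbf{v}\|_{L^2}\le CH^{1/2}$ presupposes $E_{\mathrm{free}}(\varphi(t))\ge E_{\mathrm{free}}(\varphi_\infty)$, which you have not justified. The $H^1$ rate for $\varphi$ is also left open. Coercivity of the local minimizer is not available: no nondegeneracy is assumed, so $E_{\mathrm{free}}$ need not grow quadratically near $\varphi_\infty$. The chemical-potential route would need a pointwise rate for $\|\nabla\mu(t)\|_{L^2}$, which you do not have.

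Both points are settled by a G\r{a}rding-type bound. Set $\psi=\varphi-\varphi_\infty$, so $\overline{\psi}=0$. Test the Euler--Lagrange equation \eqref{station} of $\varphi_\infty$ with $\psi$; the constant right-hand side drops out. Using the convexity of $\Psi_0$, $a\in C^2([-1,1])$ and $\nabla\varphi_\infty\in\mathbf{L}^\infty(\Omega)$, you obtain
\[
E_{\mathrm{free}}(\varphi)-E_{\mathrm{free}}(\varphi_\infty)\ge\frac{a_\ast}{4}\|\nabla\psi\|_{L^2}^2-C\|\psi\|_{L^2}^2 .
\]
With $\|\psi\|_{L^2}^2\le C\|\psi\|_{(H^1)'}\|\nabla\psi\|_{L^2}$, this gives
\[
\frac{\rho_\ast}{2}\|\mathbf{v}\|_{L^2}^2+\frac{a_\ast}{8}\|\nabla\psi\|_{L^2}^2\le H(t)+C\|\psi\|_{(H^1)'}^2\le C(1+t)^{-\frac{2\theta}{1-2\theta}},
\]
which is exactly the claimed rate for both components, with no sign condition needed.

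Finally, consider applying Lemma~\ref{LS} at $\varphi_\ast$ itself. The constructed trajectory stays within $\omega\le\kappa$ of $\varphi_\ast$ in $H^2$ for all $t\ge0$, and the same inequality applied to the critical point $\varphi_\infty$ gives $E_{\mathrm{free}}(\varphi_\infty)=E_{\mathrm{free}}(\varphi_\ast)$. This lets you run the argument from $t=0$. It avoids the trajectory-dependent entry time into the \L ojasiewicz neighbourhood of $\varphi_\infty$, which otherwise enters your constant $C$, whereas the statement lets $C$ depend only on the data.
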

\begin{rem}\rm
Based on the regularity and uniform-in-time boundedness of the global strong solution, Corollary \ref{conv-equ} can be proved by using the {\L}ojasiewicz--Simon approach and the energy equality \eqref{energy-equality}, with minor modifications to the arguments in \cite{AbelsGarckeGiorgini,CGGS,GGW,HeWu2024}. Hence, the details are left to the interested readers.
\end{rem}
\begin{rem}\rm
Since the special form of the Flory--Huggins potential \eqref{FH} is not used in our analysis, the conclusions of Theorems \ref{criterion}, \ref{global-strong} and Corollary \ref{conv-equ} can be further extended to singular potentials of the following general form (cf. \cite{CGGG,CGGS,GGW,HeWu2024}):
\begin{equation}
\Psi(r)=\Psi_{0}(r)-\frac{\Theta_{0}}{2}r^2,\nonumber
\end{equation}
where $\Psi_{0}\in C\big([-1,1]\big)\cap C^{3}(-1,1)$ satisfies $\lim_{r\to \pm 1} \Psi_{0}'(r)=\pm \infty$ and $\Psi_{0}''(r)\ge \Theta$ for all $r\in (-1,1)$, with $\Theta_0\in \mathbb{R}$ and $\Theta>0$. For Theorem \ref{global-strong} and Corollary \ref{conv-equ}, we should also assume that $\Psi_0$ is analytic in $(-1,1)$ in order to apply the {\L}ojasiewicz--Simon inequality (see Lemma \ref{LS}). This assumption of analyticity is automatically fulfilled by \eqref{FH}.
\end{rem}

%%%%%%%%%%%%%%%%%%%%%%%%%%%%%%%%%%%%%%%%%%%%%%%%%
\section{Regularity Criterion on Energy Equality}
\label{section 3}
In this section, we prove Theorem \ref{criterion} on the regularity criterion of the energy equality \eqref{energy-equality}.

%%%%%%%%%%%%%%%%%%%%%%%%%%%%%%%%%%
\subsection{Approximated formulas}
\label{app-for}

The key step is to reconstruct the pressure $p$
and rewrite the weak formula \eqref{0609-weak1} in a suitable form. Under the additional regularity \eqref{conditional-regularity} for $\mathbf{v}$,
following an argument similar to those in \cite{ADG2013,LNS}, we have
\begin{equation}
    \begin{aligned}
          &\rho(\varphi)\mathbf{v}\otimes \mathbf{v}\in L^{q'}_{\mathrm{loc}}([0,\infty);\mathbf{L}^{r'}(\Omega)),
        \quad
        \mathbb{D}\mathbf{v}\in L_{\mathrm{loc}}^{q'}([0,\infty);\mathbf{L}^{r'}(\Omega)),\\
        &\mathbf{v}\otimes\nabla\mu\in L_{\mathrm{loc}}^{q'}([0,\infty);\mathbf{L}^{r'}(\Omega)),
        \quad\mu\nabla\varphi\in L_{\mathrm{loc}}^2([0,\infty);\mathbf{L}^{\frac{3}{2}}(\Omega))\subset
        L_{\mathrm{loc}}^{q'}([0,\infty);\mathbf{W}^{-1,r'}(\Omega)),
        %&\partial_t(\mathbb{P}_\sigma (\rho(\varphi)\mathbf{v}))\in L_{\mathrm{loc}}^{q'}([0,\infty);\mathbf{W}^{-1,r'}(\Omega)),
    \end{aligned}\label{pressure-regularity}
\end{equation}
where $1/q+1/q'=1$ and $1/r+1/r'=1$ with $q,r\geq 2$. Using the assumptions \eqref{conditional-regularity}, \eqref{conditional-regularity'}, Proposition \ref{weak-solution}-$(5)$ and the Sobolev embedding in three dimensions
\begin{align*}
    \mathbf{W}^{1,r}(\Omega)\hookrightarrow \mathbf{L}^{r_\ast'}(\Omega)=
    \begin{cases}
        \mathbf{L}^{\frac{3r}{3-r}}(\Omega),&\text{for }r\in[2,3),\\
        \mathbf{L}^{\frac{\widetilde{r}}{\widetilde{r}-1}}(\Omega)\text{ for any }\widetilde{r}>1,&\text{for }r=3,\\
        \mathbf{L}^\infty(\Omega),&\text{for }r>3,
    \end{cases}
\end{align*}
 where the exponent $r_\ast$ is defined as in \eqref{conditional-regularity'} and $1/r_\ast+1/r_\ast'=1$, we can check that for any $\tau\in (0,1)$, $T\in (\tau,\infty)$,
\begin{align}
        \partial_t(\rho(\varphi)\mathbf{v})&=\rho'(\varphi)\partial_t\varphi\mathbf{v}+\rho(\varphi)\partial_t\mathbf{v} \notag \\
        &\in L^{q'}(\tau/2,T;\mathbf{L}^{r'}(\Omega))+L^{q'}(\tau/2,T;\mathbf{L}^{r_\ast}(\Omega)) \notag \\
        &\subset L^{q'}(\tau/2,T;\mathbf{W}^{-1,r'}(\Omega)).
        \label{v_t}
\end{align}
Indeed, by the boundedness of $\rho'(\varphi)$, $\rho(\varphi)$, the assumptions \eqref{conditional-regularity}, \eqref{conditional-regularity'} and Proposition \ref{weak-solution}-$(5)$ on $\partial_t \varphi$, it holds
 \begin{align*}
     \|\rho'(\varphi)\partial_t\varphi\mathbf{v}\|_{L^{q'}(\tau/2,T;\mathbf{L}^{r'}(\Omega))}&\leq \|\rho'(\varphi)\|_{L^\infty(\Omega\times(0,T))}\|\partial_t \varphi\|_{L^2(\tau/2,T;L^2(\Omega))}\|\mathbf{v}\|_{L^{\frac{2q}{q-2}}(\tau/2,T;\mathbf{L}^{\frac{2r}{r-2}}(\Omega))}<\infty,\\
     % &\leq C\|\partial_t \varphi\|_{L^2(\tau/2,T;H^1(\Omega))}\|\mathbf{v}\|_{L^{\frac{2q}{q-2}}(\tau/2,T;\mathbf{L}^{\frac{2r}{r-2}}(\Omega))} <\infty,\\
     \|\rho(\varphi)\partial_t\mathbf{v}\|_{L^{q'}(\tau/2,T;\mathbf{L}^{r_\ast}(\Omega))}&\leq \|\rho(\varphi)\|_{L^\infty(\Omega\times(0,T))}\|\partial_t\mathbf{v}\|_{L^{q'}(\tau/2,T;\mathbf{L}^{r_\ast}(\Omega))}<\infty.
 \end{align*}
 For any fixed $\tau\in (0,1)$, $T\in (\tau,\infty)$, it is convenient to introduce the functional $\mathcal{G}$ given by
 \begin{align*}
     \langle\mathcal{G},\mathbf{w}\rangle:&=-\int_0^T\int_\Omega \rho(\varphi)\mathbf{v}\cdot\partial_t\mathbf{w}\,\mathrm{d}x\,\mathrm{d}t+\int_0^T\int_\Omega \rho(\varphi)(\mathbf{v}\otimes\mathbf{v}):\nabla\mathbf{w}\,\mathrm{d}x\,\mathrm{d}t\\
     &\quad+2\int_0^T\int_\Omega \nu(\varphi)\mathbb{D}\mathbf{v}:\mathbb{D}\mathbf{w}\,\mathrm{d}x\,\mathrm{d}t-\int_0^T\int_\Omega (\mathbf{v}\otimes\mathbf{J}):\nabla\mathbf{w}\,\mathrm{d}x\,\mathrm{d}t\\
     &\quad-\int_0^T\int_\Omega \mu\nabla\varphi\cdot\mathbf{w} \,\mathrm{d}x\,\mathrm{d}t,\qquad\forall\,\mathbf{w}\in \mathbf{C}_0^\infty((\tau/2,T)\times\Omega).
 \end{align*}
 Then, \eqref{pressure-regularity} and \eqref{v_t} implies that
 the functional $\mathcal{G}$ belongs to $L^{q'}(\tau/2,T;\mathbf{W}^{-1,r'}(\Omega))$. Since
 \[ \langle\mathcal{G},\mathbf{w}\rangle=0,\quad\forall\,\mathbf{w}\in C_0^\infty((\tau/2,T);\mathbf{C}_{0,\sigma}^\infty(\Omega)),\]
 according to \cite[Chapter IV, Lemma 1.4.2]{Sohr} (see also \cite[Theorem 5.4]{Abels2019}), there exists a unique function $p\in L^{q'}(\tau/2,T;L^{r'}(\Omega))$ such that
\[ \langle\mathcal{G}(t),\mathbf{w}\rangle=\int_\Omega
p(t)\,\text{div}\,\mathbf{w}\,\mathrm{d}x,\quad\forall\,\mathbf{w}\in \mathbf{C}_0^\infty((\tau/2,T)\times\Omega),\]
and
\[\int_\Omega p(t)\,\mathrm{d}x=0,\quad\text{for a.a. }t\in(\tau/2,T).\]
%
%
%  Define
% \begin{align*}
%     \mathcal{G}= \partial_t(\mathbb{P}_\sigma(\rho(\varphi)\mathbf{v}))
%     +\text{div}(\rho(\varphi)\mathbf{v}\otimes\mathbf{v})
%     -\text{div}(2\nu(\varphi) \mathbb{D}\mathbf{v})
%     +\text{div}(\mathbf{v}\otimes\mathbf{J})-\mu\nabla\varphi.
% \end{align*}
% Then $\mathcal{G}\in L_{\mathrm{loc}}^{q'}([0,\infty);\mathbf{W}^{-1,r'}(\Omega))$
% and the weak formulation \eqref{0609-weak1} yields that for almost all $t\in(0,\infty)$,
% \begin{align*}
%     \langle\mathcal{G}(t),\mathbf{w}\rangle_{\mathbf{W}^{-1,r'}(\Omega),\mathbf{W}_0^{1,r}(\Omega)}=0,
%     \quad\forall\,\mathbf{w}\in \mathbf{W}_0^{1,r}(\Omega)\ \text{with}\ \text{div}\,\mathbf{w}=0.
% \end{align*}
% %
% According to \cite[Chapter II, Lemma 2.1.1]{Sohr},
% for almost all $t\in(0,\infty)$,
% there exists a unique function $p(t)\in L^{r'}(\Omega)$ such that
% \[ \langle\mathcal{G}(t),\mathbf{w}\rangle_{\mathbf{W}^{-1,r'}(\Omega),\mathbf{W}_0^{1,r}(\Omega)}=\int_\Omega
% p(t)\,\text{div}\,\mathbf{w}\,\mathrm{d}x,\quad\forall\,\mathbf{w}\in \mathbf{W}^{1,r}_0(\Omega),\]
% and
% \[\int_\Omega p(t)\,\mathrm{d}x=0,\quad\|p(t)\|_{L^{r'}}\leq C\|\mathcal{G}(t)\|_{\mathbf{W}^{-1,r'}}.\]
% The above facts, together with the regularity $\mathcal{G}\in L_{\mathrm{loc}}^{q'}([0,\infty);\mathbf{W}^{-1,r'}(\Omega))$,
% yield that
% $$p\in L_{\mathrm{loc}}^{q'}([0,\infty);L^{r'}(\Omega)).$$
%
Hence, for any $T\in (2\tau,\infty)$, we can conclude
\begin{align}
    &-\int_0^T\int_\Omega\rho(\varphi) \mathbf{v}\cdot\partial_t\mathbf{w}\,\mathrm{d}x\,\mathrm{d}t
    +\int_0^T\int_\Omega\text{div}(\rho(\varphi)\mathbf{v}\otimes\mathbf{v})\cdot\mathbf{w}\,\mathrm{d}x\,\mathrm{d}t\notag\\
    &\qquad+\int_0^T\int_\Omega 2\nu(\varphi)\mathbb{D}\mathbf{v}: \mathbb{D}\mathbf{w}\,\mathrm{d}x\,\mathrm{d}t-\int_0^T\int_\Omega(\mathbf{v}\otimes{\mathbf{J}}): \mathbb{D}\mathbf{w}\,\mathrm{d}x\,\mathrm{d}t\notag\\
    &\quad=\int_0^T\int_\Omega\mu\nabla\varphi \cdot\mathbf{w}\,\mathrm{d}x\,\mathrm{d}t
     +\int_0^T\int_\Omega p\,\text{div}\,\mathbf{w}\,\mathrm{d}x\,\mathrm{d}t,\quad
     \forall\, \mathbf{w}\in \mathbf{C}_0^\infty((\tau/2,T)\times\Omega).
     \label{refined-weak-form}
\end{align}
For $0<\varepsilon<\tau/2$, according to \eqref{refined-weak-form} and applying the same argument as in \cite[Section 4]{CLWX2020}, we obtain
\begin{align}
        &\partial_t\Big(\xi_0 (\rho(\varphi)\mathbf{v})^{\varepsilon}
        +\sum_{i=1}^k\xi_i(\rho(\widetilde{\varphi})\widetilde{\mathbf{v}})_i^{\varepsilon}\Big)
        +\Big(\xi_0 \mathrm{div}(\rho(\varphi)\mathbf{v}\otimes\mathbf{v})^{\varepsilon}
        +\sum_{i=1}^k\xi_i\mathrm{div}(\rho(\widetilde{\varphi})\widetilde{\mathbf{v}}\otimes\widetilde{\mathbf{v}})^{\varepsilon}_i\Big)\notag\\
        &\qquad+\Big(\xi_0\mathrm{div}(\mathbf{v}\otimes\mathbf{J})^{\varepsilon}
        +\sum_{i=1}^k \xi_i\mathrm{div}(\widetilde{\mathbf{v}}\otimes\widetilde{\mathbf{J}})_i^{\varepsilon}\Big)
        -\Big(\xi_0 \mathrm{div}(2\nu(\varphi)\mathbb{D}\mathbf{v})^{\varepsilon}
        +\sum_{i=1}^k\xi_i \mathrm{div}(2\nu(\widetilde{\varphi})\mathbb{D}\widetilde{\mathbf{v}})_i^{\varepsilon}\Big)\notag\\
        &\qquad-\Big(\xi_0(\mu\nabla\varphi)^{\varepsilon}
        +\sum_{i=1}^k\xi_i (\widetilde{\mu}\nabla\widetilde{\varphi})_i^{\varepsilon}\Big)
        +\Big(\xi_0\nabla {p}^{\varepsilon}+\sum_{i=1}^k \xi_i \nabla \widetilde{p}^{\varepsilon}_i\Big)
        =\boldsymbol{0},\label{mollify-eq1}
\end{align}
for almost all $(x,t)\in\Omega\times(\tau,T-\tau)$, where the definition of approximations due to mollifiers $f^\varepsilon$, $\widetilde{f}_i^\varepsilon$ and the partition of unity $\{\xi_i\}$ can be found in Appendix \ref{APP-modif}.

Like in \cite{CLWX2020,LNS}, for given small constants $\tau,\delta>0$, we define the cut-off functions  $\chi_\tau\in W^{1,\infty}(0,T)$ such that
\begin{align}
	\chi_\tau(t)=
	\begin{cases}
		\displaystyle\frac{t-\tau}{\tau},&\tau\leq t\leq 2\tau,\\
		1,&2\tau\leq t\leq T-2\tau,\\[1mm]
		\displaystyle\frac{T-t-\tau}{\tau},& T-2\tau\leq t\leq T-\tau,\\
		0,&\text{otherwise},
	\end{cases}\notag%\label{time-cut-off}
\end{align}
and $\phi_\delta(x)\in C_0^1(\Omega)$ satisfying
\begin{align}
    \begin{cases}
        0\leq \phi_\delta(x)\leq 1,\ \ \phi_\delta(x)=1
        \text{ if }x\in\Omega \text{ and } \text{dist}(x,\partial\Omega)\geq\delta,\\
        \phi_\delta\to1 \ \text{as}\ \delta\to0,\ \
        \text{and} \ \ |\nabla\phi_\delta|\leq \displaystyle\frac{2}{\text{dist}(x,\partial\Omega)}.
    \end{cases}\notag%\label{cut-off}
\end{align}
Hence, $\chi_\tau \phi_\delta[\mathbf{v}]^{\varepsilon}$ is a legitimate test function,
where $[\mathbf{v}]^{\varepsilon}$ is defined as in \eqref{mollify-profile} with $f=\mathbf{v}$.
Multiplying \eqref{mollify-eq1} by $\chi_\tau \phi_\delta[\mathbf{v}]^{\varepsilon}$
and integrating it over $\Omega\times(0,T)$, we get
 \begin{align}
        &\int_0^T\int_\Omega\chi_\tau\phi_\delta[\mathbf{v}]^{\varepsilon}\cdot\partial_t\Big(\xi_0 (\rho(\varphi)\mathbf{v})^{\varepsilon}
        +\sum_{i=1}^k\xi_i(\rho(\widetilde{\varphi})\widetilde{\mathbf{v}})_i^{\varepsilon}\Big)\,\mathrm{d}x\,\mathrm{d}t\notag\\
        &\quad+\int_0^T\int_\Omega\chi_\tau\phi_\delta[\mathbf{v}]^{\varepsilon}\cdot\Big(\xi_0 \mathrm{div}(\rho(\varphi)\mathbf{v}\otimes\mathbf{v})^{\varepsilon}
        +\sum_{i=1}^k\xi_i\mathrm{div}(\rho(\widetilde{\varphi})\widetilde{\mathbf{v}}\otimes\widetilde{\mathbf{v}})^{\varepsilon}_i\Big)\,\mathrm{d}x\,\mathrm{d}t\notag\\
        &\quad+\int_0^T\int_\Omega\chi_\tau\phi_\delta[\mathbf{v}]^{\varepsilon}\cdot\Big(\xi_0\mathrm{div}(\mathbf{v}\otimes\mathbf{J})^{\varepsilon}
        +\sum_{i=1}^k \xi_i\mathrm{div}(\widetilde{\mathbf{v}}\otimes\widetilde{\mathbf{J}})_i^{\varepsilon}\Big)\,\mathrm{d}x\,\mathrm{d}t\notag\\
        &\quad-\int_0^T\int_\Omega\chi_\tau\phi_\delta[\mathbf{v}]^{\varepsilon}\cdot\Big(\xi_0 \mathrm{div}(2\nu(\varphi)\mathbb{D}\mathbf{v})^{\varepsilon}
        +\sum_{i=1}^k\xi_i \mathrm{div}(2\nu(\widetilde{\varphi})\mathbb{D}\widetilde{\mathbf{v}})_i^{\varepsilon}\Big)\,\mathrm{d}x\,\mathrm{d}t\notag\\
        &\quad-\int_0^T\int_\Omega\chi_\tau\phi_\delta[\mathbf{v}]^{\varepsilon}\cdot\Big(\xi_0(\mu\nabla\varphi)^{\varepsilon}
        +\sum_{i=1}^k\xi_i (\widetilde{\mu}\nabla\widetilde{\varphi})_i^{\varepsilon}\Big)\,\mathrm{d}x\,\mathrm{d}t\notag\\
        &\quad+\int_0^T\int_\Omega\chi_\tau\phi_\delta[\mathbf{v}]^{\varepsilon}\cdot\Big(\xi_0\nabla p^{\varepsilon}
        +\sum_{i=1}^k \xi_i \nabla \widetilde{p}^{\varepsilon}_i\Big)\,\mathrm{d}x\,\mathrm{d}t=0.\label{mollify-eq2}
    \end{align}
    %
    % Multiplying \eqref{mollify-eq2} by $\chi_\tau\phi_\delta [\mu]^{\varepsilon}$, there holds
    % \begin{align}
    %     &\int_0^T\int_\Omega\chi_\tau\phi_\delta[\mu]^{\varepsilon}\partial_t\Big(\xi_0\varphi^{\varepsilon}+\sum_{i=1}^k\xi_i\widetilde{\varphi}_i^{\varepsilon}\Big)\,\mathrm{d}x\,\mathrm{d}t\notag\\
    %     &\quad+\int_0^T\int_\Omega\chi_\tau\phi_\delta[\mu]^{\varepsilon}\Big(\xi_0(\mathbf{v}\cdot\nabla\varphi)^{\varepsilon}+\sum_{i=1}^k \xi_i (\widetilde{\mathbf{v}}\cdot\nabla\widetilde{\varphi})^{\varepsilon}_i\Big)\,\mathrm{d}x\,\mathrm{d}t\notag\\
    %     &\quad-\int_0^T\int_\Omega\chi_\tau\phi_\delta[\mu]^{\varepsilon}\Big(\xi_0 (\Delta\mu)^{\varepsilon}+\sum_{i=1}^k \xi_i (\Delta\widetilde{\mu})_i^{\varepsilon}\Big)\,\mathrm{d}x\,\mathrm{d}t=0.\label{mollify-eq2'}
    % \end{align}
    Next, recalling \eqref{strong1},
    we test the equation $\partial_t\varphi+\mathbf{v}\cdot\nabla \varphi = \Delta \mu$ by $\chi_\tau\phi_\delta\mu$, then from the point-wise relation $\mu=-\Delta \varphi +\Psi'(\varphi)$ almost everywhere
    in $\Omega\times (0,\infty)$, we can deduce that
    \begin{align}
        0&=\int_0^T\int_\Omega\chi_\tau \phi_\delta\mu \partial_t\varphi\,\mathrm{d}x\,\mathrm{d}t
        +\int_0^T\int_\Omega \chi_\tau\phi_\delta\mu(\mathbf{v}\cdot\nabla\varphi)\,\mathrm{d}x\,\mathrm{d}t
        -\int_0^T\int_\Omega \chi_\tau\phi_\delta\mu\Delta\mu\,\mathrm{d}x\,\mathrm{d}t\notag\\
        &=\int_0^T\int_\Omega \chi_\tau\phi_\delta(-\Delta\varphi+\Psi'(\varphi))\partial_t\varphi\,\mathrm{d}x\,\mathrm{d}t
        +\int_0^T\int_\Omega \chi_\tau\phi_\delta\mu(\mathbf{v}\cdot\nabla\varphi)\,\mathrm{d}x\,\mathrm{d}t\notag\\
        &\quad-\int_0^T\int_\Omega \chi_\tau\phi_\delta\mu\Delta\mu\,\mathrm{d}x\,\mathrm{d}t.\label{mollify-eq}
    \end{align}

\subsection{\texorpdfstring{Passage to the limit as $\varepsilon\to0$}{Passage to the limit as \varepsilon \to 0}}
In the following, we treat the terms in \eqref{mollify-eq2} one by one and pass to the limit as $\varepsilon\to 0$. By interpolation, the condition \eqref{conditional-regularity} together with the fact $\mathbf{v}\in L^\infty(0,\infty;\mathbf{L}^2_\sigma(\Omega))$ implies that $\mathbf{v}\in L^4_{\mathrm{loc}} ([0,\infty);\mathbf{L}^4(\Omega))$. This property will be useful in the subsequent analysis.

\begin{lem}
\label{epsilon-limit-lemma-1}
    Let the assumptions in Theorem \ref{criterion} be satisfied. Then
    for fixed $\tau$ and $\delta$, the first three terms on the left-hand side of  \eqref{mollify-eq2} satisfy
    \begin{align}
        &\lim_{\varepsilon\to0}\int_0^T\int_\Omega\chi_\tau\phi_\delta[\mathbf{v}]^{\varepsilon}\cdot\partial_t\Big(\xi_0 (\rho(\varphi)\mathbf{v})^{\varepsilon}
        +\sum_{i=1}^k\xi_i(\rho(\widetilde{\varphi})\widetilde{\mathbf{v}})_i^{\varepsilon}\Big)\,\mathrm{d}x\,\mathrm{d}t\notag\\
        &\qquad+\lim_{\varepsilon\to0}\int_0^T\int_\Omega\chi_\tau\phi_\delta[\mathbf{v}]^{\varepsilon}\cdot\Big(\xi_0 \mathrm{div}(\rho(\varphi)\mathbf{v}\otimes\mathbf{v})^{\varepsilon}
        +\sum_{i=1}^k\xi_i\mathrm{div}(\rho(\widetilde{\varphi})\widetilde{\mathbf{v}}\otimes\widetilde{\mathbf{v}})^{\varepsilon}_i\Big)\,\mathrm{d}x\,\mathrm{d}t\notag\\
        &\qquad+\lim_{\varepsilon\to0}\int_0^T\int_\Omega\chi_\tau\phi_\delta[\mathbf{v}]^{\varepsilon}\cdot\Big(\xi_0\mathrm{div}(\mathbf{v}\otimes\mathbf{J})^{\varepsilon}
        +\sum_{i=1}^k \xi_i\mathrm{div}(\widetilde{\mathbf{v}}\otimes\widetilde{\mathbf{J}})_i^{\varepsilon}\Big)\,\mathrm{d}x\,\mathrm{d}t\notag\\
        &\quad=-\frac{1}{2}\int_0^T\int_\Omega\chi_\tau'\phi_\delta \rho(\varphi)|\mathbf{v}|^2\,\mathrm{d}x\,\mathrm{d}t
        -\frac{1}{2}\int_0^T\int_\Omega\chi_\tau\rho(\varphi)(\mathbf{v}\cdot\nabla\phi_\delta)|\mathbf{v}|^2\,\mathrm{d}x\,\mathrm{d}t\notag\\
        &\qquad+\frac{1}{2}\int_0^T\int_\Omega\chi_\tau\rho'(\varphi)(\nabla\mu\cdot\nabla\phi_\delta)|\mathbf{v}|^2\,\mathrm{d}x\,\mathrm{d}t.\label{epsilon-limit-1}
    \end{align}
\end{lem}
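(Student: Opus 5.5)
The plan is to pass to the limit in each of the three terms separately, using the convergence properties of the global mollifier from Appendix \ref{APP-modif} (in the spirit of \cite{CLWX2020}). Then I would recombine the limits with the mass-transport identity for $\rho(\varphi)$.

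\textbf{Step 1: identify the limits.} Since $\chi_\tau$ vanishes near $t=\tau$ and $t=T-\tau$, and $\phi_\delta$ has compact support in $\Omega$, for $\varepsilon$ small (with respect to $\delta$) only interior mollification matters on $\mathrm{supp}\,\phi_\delta$. The combination $\xi_0 f^\varepsilon+\sum_i\xi_i\widetilde f_i^\varepsilon$ converges to $f$ in $L^{s}_tL^{\sigma}_x$ whenever $f$ lies in that space, and the same holds for $[\mathbf{v}]^\varepsilon\to\mathbf{v}$. The relevant spaces are as follows. By \eqref{conditional-regularity}, \eqref{conditional-regularity'} and \eqref{v_t}, $\partial_t(\rho(\varphi)\mathbf{v})\in L^{q'}(\tau/2,T;\mathbf{W}^{-1,r'})$, while $[\mathbf{v}]^\varepsilon\to\mathbf{v}$ in $L^q(\mathbf{W}^{1,r})$. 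The fluxes satisfy $\rho(\varphi)\mathbf{v}\otimes\mathbf{v},\ \mathbf{v}\otimes\mathbf{J}\in L^{q'}\mathbf{L}^{r'}$ by \eqref{pressure-regularity}. After integrating the divergence terms by parts against $\chi_\tau\phi_\delta[\mathbf{v}]^\varepsilon$, the $\varepsilon\to0$ limit of the three terms equals
\[
\int_0^T\!\langle\partial_t(\rho\mathbf{v}),\chi_\tau\phi_\delta\mathbf{v}\rangle\,\mathrm{d}t-\int_0^T\!\!\int_\Omega(\rho\mathbf{v}\otimes\mathbf{v}+\mathbf{v}\otimes\mathbf{J}):\nabla(\chi_\tau\phi_\delta\mathbf{v})\,\mathrm{d}x\,\mathrm{d}t .
\]
These pairings are finite by Hölder's inequality with exponents $(q',q)$ and $(r',r)$.

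\textbf{Step 2: rewrite the time-derivative term.} By the assumption on $\partial_t\mathbf{v}$ and the regularity of $\partial_t\varphi$ from Proposition \ref{weak-solution}-(5), I can write $\partial_t(\rho\mathbf{v})=\rho'(\varphi)\partial_t\varphi\,\mathbf{v}+\rho\partial_t\mathbf{v}$ pointwise. This gives
\[
\partial_t(\rho\mathbf{v})\cdot\mathbf{v}=\tfrac12\partial_t(\rho|\mathbf{v}|^2)+\tfrac12\rho'(\varphi)\partial_t\varphi|\mathbf{v}|^2 .
\]
Every term is integrable, since $\partial_t\varphi\in L^2L^2$ and $\mathbf{v}\in L^{2q/(q-2)}L^{2r/(r-2)}\cap L^\infty L^2$. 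Integrating by parts in time then produces $-\frac12\iint\chi_\tau'\phi_\delta\rho|\mathbf{v}|^2$.

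\textbf{Step 3: rewrite the convective terms.} Using $\mathrm{div}\,\mathbf{v}=0$, the convective term becomes
\[
-\iint\chi_\tau\rho\,(\mathbf{v}\cdot\nabla)\mathbf{v}\cdot\phi_\delta\mathbf{v}-\iint\chi_\tau\rho(\mathbf{v}\cdot\nabla\phi_\delta)|\mathbf{v}|^2 .
\]
The first piece equals $-\frac12\iint\chi_\tau\phi_\delta\rho\,\mathbf{v}\cdot\nabla|\mathbf{v}|^2$. Integrating by parts once more yields $\frac12\iint\chi_\tau\phi_\delta\rho'(\varphi)(\mathbf{v}\cdot\nabla\varphi)|\mathbf{v}|^2+\frac12\iint\chi_\tau\rho(\mathbf{v}\cdot\nabla\phi_\delta)|\mathbf{v}|^2$. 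The $\mathbf{J}$-term is treated similarly, with $\mathbf{J}=-\rho'\nabla\mu$ and $\rho'$ constant. The piece $-\iint\chi_\tau\phi_\delta(\mathbf{J}\cdot\nabla)\mathbf{v}\cdot\mathbf{v}$ equals $\frac{\rho'}{2}\iint\chi_\tau\phi_\delta\nabla\mu\cdot\nabla|\mathbf{v}|^2$. Integrating by parts moves the derivative onto $\phi_\delta\nabla\mu$, which is legitimate because $\mu\in L^2_{\mathrm{uloc}}H^3$ for $t\ge\tau$ by Proposition \ref{weak-solution}-(5). It gives $-\frac{\rho'}2\iint\chi_\tau\phi_\delta\Delta\mu|\mathbf{v}|^2-\frac{\rho'}2\iint\chi_\tau(\nabla\mu\cdot\nabla\phi_\delta)|\mathbf{v}|^2$. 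The other piece contributes $+\rho'\iint\chi_\tau(\nabla\mu\cdot\nabla\phi_\delta)|\mathbf{v}|^2$, for a net of $+\frac12\iint\chi_\tau\rho'(\nabla\mu\cdot\nabla\phi_\delta)|\mathbf{v}|^2$.

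\textbf{Step 4: collect terms.} The remaining bulk terms combine to $\frac12\iint\chi_\tau\phi_\delta\rho'(\varphi)|\mathbf{v}|^2(\partial_t\varphi+\mathbf{v}\cdot\nabla\varphi-\Delta\mu)$. This vanishes by the strong form \eqref{strong1}. What is left is exactly \eqref{epsilon-limit-1}.

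The main obstacle is Step 1, the time-derivative term. One needs that $\partial_t$ of the combined mollified density-momentum, paired with $[\mathbf{v}]^\varepsilon$, converges. I would handle this by commuting $\partial_t$ with the spatial mollification, which is possible because the mollifier acts only in $x$, so that $\partial_t(\xi_0(\rho\mathbf{v})^\varepsilon+\dots)$ is the same mollification of $\partial_t(\rho\mathbf{v})$. Strong convergence in $L^{q'}\mathbf{W}^{-1,r'}$ against strong convergence of $\phi_\delta[\mathbf{v}]^\varepsilon$ in $L^q\mathbf{W}^{1,r}_0$ then suffices. The delicate point is the mollifier's uniform boundedness on $\mathbf{W}^{-1,r'}$ near $\mathrm{supp}\,\phi_\delta$. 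This is where the compact support of $\phi_\delta$ is used, and I expect to invoke the appendix properties.
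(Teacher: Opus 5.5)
Your argument is correct, but it is organized differently from the paper's. The paper never passes to the limit in the raw pairings. Instead it splits each of the three terms into two pieces:
\begin{itemize}
\item a principal part, where the test function $[\mathbf{v}]^\varepsilon$ is paired with an expression built from $[\mathbf{v}]^\varepsilon$ itself. These are the terms $J_{\varepsilon,1},J_{\varepsilon,2},J_{\varepsilon,3}$, built on $\partial_t(\rho(\varphi)[\mathbf{v}]^\varepsilon)$, $\mathrm{div}(\rho(\varphi)\mathbf{v}\otimes[\mathbf{v}]^\varepsilon)$ and $\mathrm{div}([\mathbf{v}]^\varepsilon\otimes\mathbf{J})$;
\item commutator remainders $I^{(i)}_{\varepsilon,j}$, which the Lions-type commutator lemmas of the appendix show to vanish.
\end{itemize}
The product rule and the integrations by parts are then carried out on the smooth $[\mathbf{v}]^\varepsilon$, and only the resulting explicit integrals are sent to the limit.

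You instead use that \eqref{conditional-regularity}--\eqref{conditional-regularity'} make $\chi_\tau\phi_\delta\mathbf{v}$ an admissible test function. Strong convergence of the global mollifier and H\"older duality then identify the limit directly, and all the calculus is done afterwards on unmollified quantities.

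This dispenses with commutator estimates, but the work moves elsewhere. The time integration by parts of $\int_\Omega\phi_\delta\rho(\varphi)|\mathbf{v}|^2\,\mathrm{d}x$ and the spatial integrations by parts against $|\mathbf{v}|^2$ now involve non-smooth functions. They need their own, standard, approximation argument, e.g.\ a time mollification pairing $\partial_t\mathbf{v}\in L^{q'}\mathbf{L}^{r_\ast}$ with $\mathbf{v}\in L^q\mathbf{L}^{r_\ast'}$. You only assert this step.

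Your route also uses the hypothesis on $\partial_t\mathbf{v}$ inside this lemma. The paper's treatment of the time-derivative term needs only $\mathbf{v}\in L^4\mathbf{L}^4$ and the regularity of $\varphi$ from Proposition \ref{weak-solution}-$(5)$. There the $\partial_t\mathbf{v}$ assumption enters only through the pressure reconstruction \eqref{v_t}.

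Two small corrections:
\begin{itemize}
\item The $\mathbf{W}^{-1,r'}$ difficulty you anticipate does not arise. By \eqref{v_t}, $\partial_t(\rho(\varphi)\mathbf{v})$ is a function in $L^{q'}\mathbf{L}^{r'}+L^{q'}\mathbf{L}^{r_\ast}$, where the global mollifier converges strongly. Meanwhile $\phi_\delta[\mathbf{v}]^\varepsilon\to\phi_\delta\mathbf{v}$ in $L^q(\mathbf{L}^{r}\cap\mathbf{L}^{r_\ast'})$ by $\mathbf{W}^{1,r}$-convergence and Sobolev embedding. This holds also for $r>3$, where $r_\ast'=\infty$.
\item The mollifier \eqref{time-mollifier} acts in space and time, not only in $x$. $\partial_t$ still commutes with it on $\mathrm{supp}\,\chi_\tau$ once $\varepsilon<\tau/2$, so your conclusion stands.
\end{itemize}
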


\begin{proof}
    We consider the first limit on the left-hand side of \eqref{epsilon-limit-1}. A direct calculation yields that
    \begin{align}
        &\int_0^T\int_\Omega\chi_\tau \phi_\delta[\mathbf{v}]^{\varepsilon}\cdot\partial_t\Big(\xi_0 (\rho(\varphi)\mathbf{v})^{\varepsilon}
        +\sum_{i=1}^k\xi_i (\rho(\widetilde{\varphi}) \widetilde{\mathbf{v}})_i^{\varepsilon}\Big)\,\mathrm{d}x\,\mathrm{d}t\notag\\
        &\quad=\int_0^T\int_\Omega \chi_\tau\phi_\delta[\mathbf{v}]^{\varepsilon}\cdot\partial_t(\rho(\varphi)[\mathbf{v}]^\varepsilon)\,\mathrm{d}x\,\mathrm{d}t
        +\int_0^T\int_\Omega \chi_\tau\phi_\delta[\mathbf{v}]^{\varepsilon}\cdot\partial_t([\rho(\varphi)\mathbf{v}]^\varepsilon-\rho(\varphi)[\mathbf{v}]^\varepsilon)\,\mathrm{d}x\,\mathrm{d}t\notag\\
        &\quad=\int_0^T\int_\Omega \chi_\tau\phi_\delta[\mathbf{v}]^{\varepsilon}\cdot\partial_t(\rho(\varphi)[\mathbf{v}]^\varepsilon)\,\mathrm{d}x\,\mathrm{d}t
        +\int_0^T\int_\Omega \chi_\tau\phi_\delta\xi_0[\mathbf{v}]^{\varepsilon}\cdot\partial_t((\rho(\varphi)\mathbf{v})^\varepsilon-\rho(\varphi)\mathbf{v}^\varepsilon)\,\mathrm{d}x\,\mathrm{d}t\notag\\
        &\qquad+\sum_{i=1}^k \int_0^T\int_\Omega\chi_\tau\phi_\delta\xi_i[\mathbf{v}]^{\varepsilon}
        \cdot\partial_t ((\rho(\widetilde{\varphi})\widetilde{\mathbf{v}})_i^\varepsilon
        -\rho({\varphi}) \widetilde{\mathbf{v}}_i^\varepsilon)\,\mathrm{d}x\,\mathrm{d}t\notag\\
        &\quad:= J_{\varepsilon,1} +\sum_{i=0}^k I_{\varepsilon,1}^{(i)}.
        \notag
        %\label{term1}
    \end{align}
    For the term $J_{\varepsilon,1}$, according to the properties of regularity
    \begin{align}
         \mathbf{v}\in L^4(0,T;\mathbf{L}^4(\Omega)),\quad\partial_t\varphi\in L^2(\tau/2,T;H^1(\Omega))\label{property-0318-1}
    \end{align}
    and the property of the mollifier, it holds
\begin{align*}
    J_{\varepsilon,1}
    &=\int_0^T\int_\Omega\chi_\tau \phi_\delta\rho'(\varphi)\partial_t\varphi|[\mathbf{v}]^{\varepsilon}|^2\,\mathrm{d}x\,\mathrm{d}t
    +\int_0^T\int_\Omega\chi_\tau \phi_\delta\rho(\varphi)\partial_t\Big(\frac{1}{2}|[\mathbf{v}]^{\varepsilon}|^2\Big)\,\mathrm{d}x\,\mathrm{d}t\\
    &=\int_0^T\int_\Omega\chi_\tau \phi_\delta\rho'(\varphi)\partial_t\varphi|[\mathbf{v}]^{\varepsilon}|^2\,\mathrm{d}x\,\mathrm{d}t
    -\frac{1}{2}\int_0^T\int_\Omega\partial_t(\chi_\tau\phi_\delta\rho(\varphi))|[\mathbf{v}]^{\varepsilon}|^2\,\mathrm{d}x\,\mathrm{d}t\\
    &=\frac{1}{2}\int_0^T\int_\Omega\chi_\tau\phi_\delta\rho'(\varphi)\partial_t\varphi|[\mathbf{v}]^{\varepsilon}|^2\,\mathrm{d}x\,\mathrm{d}t
    -\frac{1}{2}\int_0^T\int_\Omega\chi_\tau'\phi_\delta\rho(\varphi)|[\mathbf{v}]^{\varepsilon}|^2\,\mathrm{d}x\,\mathrm{d}t\\
    &\to \frac{1}{2}\int_0^T\int_\Omega\chi_\tau\phi_\delta\rho'(\varphi)\partial_t\varphi|\mathbf{v}|^2\,\mathrm{d}x\,\mathrm{d}t
    -\frac{1}{2}\int_0^T\int_\Omega\chi_\tau'\phi_\delta\rho(\varphi)|\mathbf{v}|^2\,\mathrm{d}x\,\mathrm{d}t,\quad\text{as }\varepsilon\to0.
\end{align*}
Concerning the term $I_{\varepsilon,1}^{(i)}$, from the commutator estimates \eqref{commutator-convergence-1}, \eqref{commutator-convergence-2},
and \eqref{property-0318-1},
we see that
\begin{align*}
    |I_{\varepsilon,1}^{(0)}|
    &\leq C\|[\mathbf{v}]^\varepsilon\|_{{L}^4(\tau,T-\tau;\mathbf{L}^4(\Omega))}
    \|\partial_t((\rho(\varphi) \mathbf{v})^\varepsilon
    -\rho(\varphi)\mathbf{v}^\varepsilon)\|_{L^\frac{4}{3}(\tau,T-\tau;\mathbf{L}^\frac{4}{3}(V_0))}\to0,\\
    |I_{\varepsilon,1}^{(i)}|
    &\leq C\|[\mathbf{v}]^\varepsilon\|_{{L}^4(\tau,T-\tau;\mathbf{L}^4(\Omega))}
    \|\partial_t((\rho(\widetilde{\varphi}) \widetilde{\mathbf{v}})_i^\varepsilon
    -\rho({\varphi})\widetilde{\mathbf{v}}_i^\varepsilon)\|_{L^\frac{4}{3}(\tau,T-\tau;\mathbf{L}^\frac{4}{3}(V_i))}\to0,\quad\text{for }i=1,...,k,
\end{align*}
as $\varepsilon\to0$. Therefore, we can conclude
\begin{align}
    \lim_{\varepsilon\to0}\Big( J_{\varepsilon,1}+\sum_{i=0}^k I_{\varepsilon,1}^{(i)}\Big)
    =\frac{1}{2}\int_0^T\int_\Omega\chi_\tau\phi_\delta\rho'(\varphi)\partial_t\varphi|\mathbf{v}|^2\,\mathrm{d}x\,\mathrm{d}t
    -\frac{1}{2}\int_0^T\int_\Omega\chi_\tau'\phi_\delta\rho(\varphi)|\mathbf{v}|^2\,\mathrm{d}x\,\mathrm{d}t.
    \notag
    %\label{I1J1}
\end{align}

Next, for the convective terms, we have
    \begin{align}
         &\int_0^T\int_\Omega\chi_\tau \phi_\delta[\mathbf{v}]^{\varepsilon}
         \cdot\Big(\xi_0 \mathrm{div}(\rho(\varphi)\mathbf{v}\otimes\mathbf{v})^{\varepsilon}
         +\sum_{i=1}^k\xi_i\mathrm{div}(\rho(\widetilde{\varphi})\widetilde{\mathbf{v}}\otimes\widetilde{\mathbf{v}})^{\varepsilon}_i\Big)
         \,\mathrm{d}x\,\mathrm{d}t\notag\\
         &\quad=\int_0^T\int_\Omega \chi_\tau\phi_\delta[\mathbf{v}]^{\varepsilon}
         \cdot\Big(\xi_0 \mathrm{div}(\rho(\varphi)\mathbf{v}\otimes\mathbf{v})^{\varepsilon}
         +\sum_{i=1}^k\xi_i\mathrm{div}(\rho(\widetilde{\varphi})\widetilde{\mathbf{v}}\otimes\widetilde{\mathbf{v}})^{\varepsilon}_i
         -\text{div}(\rho(\varphi)\mathbf{v}\otimes[\mathbf{v}]^\varepsilon)\Big)\,\mathrm{d}x\,\mathrm{d}t\notag\\
         &\qquad+\int_0^T\int_\Omega \chi_\tau\phi_\delta[\mathbf{v}]^{\varepsilon}
         \cdot\text{div}(\rho(\varphi)\mathbf{v}\otimes[\mathbf{v}]^\varepsilon)\,\mathrm{d}x\,\mathrm{d}t\notag\\
       &\quad=\int_0^T \int_\Omega \chi_\tau \phi_\delta \xi_0 [\mathbf{v}]^\varepsilon
       \cdot\text{div}\Big((\rho(\varphi) \mathbf{v}\otimes\mathbf{v})^\varepsilon-\rho(\varphi)\mathbf{v}\otimes[\mathbf{v}]^\varepsilon\Big)
       \,\mathrm{d}x\,\mathrm{d}t\notag\\
       &\qquad+\sum_{i=1}^k\int_0^T \int_\Omega \chi_\tau \phi_\delta \xi_i [\mathbf{v}]^\varepsilon
       \cdot\text{div} \Big((\rho(\widetilde{\varphi})\widetilde{\mathbf{v}}\otimes\widetilde{\mathbf{v}})_i^\varepsilon
       -\rho(\varphi)\mathbf{v} \otimes[\mathbf{v}]^\varepsilon\Big)\,\mathrm{d}x\,\mathrm{d}t\notag\\
       &\qquad+\int_0^T\int_\Omega \chi_\tau\phi_\delta[\mathbf{v}]^{\varepsilon}
       \cdot\text{div}(\rho(\varphi)\mathbf{v}\otimes[\mathbf{v}]^\varepsilon)\,\mathrm{d}x\,\mathrm{d}t\notag\\
       &\quad=-\int_0^T\int_\Omega \nabla(\chi_\tau \phi_\delta\xi_0[\mathbf{v}]^\varepsilon):
       \Big((\rho(\varphi) \mathbf{v}\otimes\mathbf{v})^\varepsilon-\rho(\varphi)\mathbf{v}\otimes[\mathbf{v}]^\varepsilon\Big)
       \,\mathrm{d}x\,\mathrm{d}t\notag\\
       &\qquad-\sum_{i=1}^k\int_0^T \int_\Omega \nabla(\chi_\tau \phi_\delta \xi_i [\mathbf{v}]^\varepsilon):
       \Big((\rho(\widetilde{\varphi}) \widetilde{\mathbf{v}}\otimes\widetilde{\mathbf{v}})_i^\varepsilon
       -\rho(\varphi)\mathbf{v}\otimes[\mathbf{v}]^\varepsilon\Big)\,\mathrm{d}x\,\mathrm{d}t\notag\\
       &\qquad+\int_0^T\int_\Omega \chi_\tau\phi_\delta[\mathbf{v}]^{\varepsilon}\cdot(\rho'(\varphi)(\mathbf{v}\cdot\nabla\varphi)
       [\mathbf{v}]^\varepsilon+\rho(\varphi)(\mathbf{v}\cdot\nabla)[\mathbf{v}]^\varepsilon)\,\mathrm{d}x\,\mathrm{d}t\notag\\
       &\quad:=\sum_{i=0}^k I_{\varepsilon,2}^{(i)}+J_{\varepsilon,2}.
       \notag
       %\label{convective}
    \end{align}
For the term $I_{\varepsilon,2}^{(0)}$, it holds
    \begin{align}
        |I_{\varepsilon,2}^{(0)}|
        &\leq C\|[\mathbf{v}]^\varepsilon\|_{ L^q(\tau,T-\tau;\mathbf{W}^{1,r}(\Omega))}
        \|(\rho(\varphi)\mathbf{v}\otimes\mathbf{v})^\varepsilon
        -\rho(\varphi)\mathbf{v}\otimes[\mathbf{v}]^\varepsilon\|_{ L^{q'}(\tau,T-\tau;\mathbf{L}^{r'}(V_0))} \notag\\
        &\leq C\|\mathbf{v}\|_{L^q(0,T;\mathbf{W}^{1,r}_0(\Omega))}
        \|(\rho(\varphi)\mathbf{v}\otimes\mathbf{v})^\varepsilon
        -\rho(\varphi)\mathbf{v}\otimes[\mathbf{v}]^\varepsilon\|_{L^{q'}(\tau,T-\tau;\mathbf{L}^{r'}(V_0))}\notag\\
        &\leq C\|\nabla\mathbf{v}\|_{L^q(0,T;\mathbf{L}^{r}(\Omega))}
        \|(\rho(\varphi)\mathbf{v}\otimes\mathbf{v})^\varepsilon
        -\rho(\varphi)\mathbf{v}\otimes[\mathbf{v}]^\varepsilon\|_{L^{q'}(\tau,T-\tau;\mathbf{L}^{r'}(V_0))}\notag\\
        &\leq  C\|\nabla\mathbf{v}\|_{L^q(0,T;\mathbf{L}^{r}(\Omega))}
        \|(\rho(\varphi)\mathbf{v}\otimes\mathbf{v})^\varepsilon
        -\rho(\varphi)\mathbf{v}\otimes\mathbf{v}^\varepsilon\|_{L^{q'}(\tau,T-\tau;\mathbf{L}^{r'}(V_0))}\notag\\
        &\quad+ C\|\nabla\mathbf{v}\|_{L^q(0,T;\mathbf{L}^{r}(\Omega))}
        \|\rho(\varphi)\mathbf{v} \otimes(\mathbf{v}^\varepsilon-[\mathbf{v}]^\varepsilon)\|_{L^{q'}(\tau,T-\tau;\mathbf{L}^{r'}(V_0))}.\label{0203-I2-1}
    \end{align}
% Under the additional assumption \eqref{conditional-regularity} with $(r,q)$ satisfying \eqref{index}, we obtain
% \begin{align*}
%     &(\rho(\varphi)\mathbf{v}\otimes\mathbf{v})^\varepsilon-\rho(\varphi)\mathbf{v}\otimes\mathbf{v}^\varepsilon\in L^{r'}(0,T;L^{q'}(\Omega)),\\
%     &\rho(\varphi)\mathbf{v}\otimes\mathbf{v}^\varepsilon-\rho(\varphi)\mathbf{v}\otimes[\mathbf{v}]^\varepsilon\in L^{r'}(0,T;L^{q'}(\Omega)).
% \end{align*}
Noticing that {$\rho(\varphi)\mathbf{v}$, $\mathbf{v}\in L^{2q'}(0,T;\mathbf{L}^{2r'}(\Omega))$,} exploiting the properties of mollification, we can pass to the limit as $\varepsilon\to0$ in \eqref{0203-I2-1}
and obtain $\lim_{\varepsilon\to0}I_{\varepsilon,2}^{(0)}=0$.
 Similarly, we can conclude that $\lim_{\varepsilon\to0}I_{\varepsilon,2}^{(i)}=0$ for $i=1,...,k$.
Concerning the term $J_{\varepsilon,2}$, we have
\begin{align*}
    J_{\varepsilon,2}
    &=\int_0^T\int_\Omega \chi_\tau\phi_\delta\rho'(\varphi)(\mathbf{v}\cdot\nabla\varphi)|[\mathbf{v}]^\varepsilon|^2
    +\chi_\tau\phi_\delta \rho(\varphi)\mathbf{v}\cdot\nabla\Big(\frac{1}{2}|[\mathbf{v}]^\varepsilon|^2\Big)\,\mathrm{d}x\,\mathrm{d}t\\
    &=\frac{1}{2}\int_0^T\int_\Omega\chi_\tau\phi_\delta\rho'(\varphi)(\mathbf{v}\cdot\nabla\varphi)|[\mathbf{v}]^\varepsilon|^2\,\mathrm{d}x\,\mathrm{d}t
    -\frac{1}{2}\int_0^T\int_\Omega \chi_\tau\rho(\varphi)(\mathbf{v}\cdot\nabla\phi_\delta)|[\mathbf{v}]^\varepsilon|^2\,\mathrm{d}x\,\mathrm{d}t.
\end{align*}
By the regularity properties
{$\mathbf{v}\in L^4(0,T;\mathbf{L}^4(\Omega))$} and $\nabla\varphi\in L^\infty(\tau/2,T;H^1(\Omega))$,
we can pass to the limit as $\varepsilon\to 0$ and conclude that
\begin{align}
    \lim_{\varepsilon\to0}J_{\varepsilon,2}
    ={\frac{1}{2}\int_0^T\int_\Omega\chi_\tau\phi_\delta\rho'(\varphi)(\mathbf{v}\cdot\nabla\varphi)|\mathbf{v}|^2\,\mathrm{d}x\,\mathrm{d}t}
    -\frac{1}{2}\int_0^T\int_\Omega\chi_\tau\rho(\varphi)(\mathbf{v}\cdot\nabla\phi_\delta)|\mathbf{v}|^2\,\mathrm{d}x\,\mathrm{d}t.
    \notag
    %\label{J2}
\end{align}

Now for the term involving the flux $\mathbf{J}$, we find that
\begin{align}
    &\int_0^T\int_\Omega\chi_\tau \phi_\delta[\mathbf{v}]^{\varepsilon}\cdot\Big(\xi_0\mathrm{div}(\mathbf{v}\otimes\mathbf{J})^{\varepsilon}
    +\sum_{i=1}^k \xi_i\mathrm{div}(\widetilde{\mathbf{v}}\otimes\widetilde{\mathbf{J}})_i^{\varepsilon}\Big)\,\mathrm{d}x\,\mathrm{d}t\notag\\
    &\quad=\int_0^T\int_\Omega \chi_\tau\phi_\delta[\mathbf{v}]^{\varepsilon}\cdot\Big(\xi_0\mathrm{div}(\mathbf{v}\otimes\mathbf{J})^{\varepsilon}
    +\sum_{i=1}^k \xi_i\mathrm{div}(\widetilde{\mathbf{v}}\otimes\widetilde{\mathbf{J}})_i^{\varepsilon}
    -\text{div}([\mathbf{v}]^\varepsilon\otimes\mathbf{J})\Big)\,\mathrm{d}x\,\mathrm{d}t\notag\\
    &\qquad+\int_0^T\int_\Omega \chi_\tau\phi_\delta[\mathbf{v}]^\varepsilon
    \cdot\text{div}([\mathbf{v}]^\varepsilon\otimes\mathbf{J})\,\mathrm{d}x\,\mathrm{d}t\notag\\
    &\quad=\int_0^T\int_\Omega\chi_\tau \phi_\delta\xi_0[\mathbf{v}]^\varepsilon \cdot
    \text{div}\Big(({\mathbf{v}} \otimes{\mathbf{J}})^\varepsilon-[\mathbf{v}]^\varepsilon\otimes\mathbf{J}\Big)\,\mathrm{d}x\,\mathrm{d}t\notag\\
    &\qquad+\sum_{i=1}^k\int_0^T \int_\Omega\chi_\tau \phi_\delta\xi_i [\mathbf{v}]^\varepsilon \cdot
    \text{div}\Big((\widetilde{\mathbf{v}} \otimes\widetilde{\mathbf{J}})^\varepsilon_i-[\mathbf{v}]^\varepsilon\otimes\mathbf{J}\Big)\,\mathrm{d}x\,\mathrm{d}t\notag\\
    &\qquad+\int_0^T\int_\Omega \chi_\tau\phi_\delta[\mathbf{v}]^\varepsilon\cdot
    \text{div}([\mathbf{v}]^\varepsilon\otimes\mathbf{J})\,\mathrm{d}x\,\mathrm{d}t\notag\\
    &:=\sum_{i=0}^k I^{(i)}_{\varepsilon,3}+J_{\varepsilon,3}.
    \notag
    %\label{flux}
\end{align}
For the term $I_{\varepsilon,3}^{(0)}$, using the property {$\mathbf{v}\in L^4(0,T;\mathbf{L}^4(\Omega))$} and
$$\mathbf{J}=-\rho'(\varphi)\nabla\mu\in  L^\infty(\tau/2,T;\mathbf{L}^2(\Omega))\cap L^2(\tau/2,T;\mathbf{H}^2(\Omega))
\hookrightarrow L^4(\tau/2,T;\mathbf{L}^4(\Omega)),$$
similarly to \eqref{0203-I2-1}, we can apply \eqref{local-convergence-1} and Lemma \ref{Commutator-lemma-3} to get
\begin{align*}
    |I_{\varepsilon,3}^{(0)}|
    &=\Big|\int_0^T\int_\Omega\nabla(\chi_\tau \phi_\delta\xi_0[\mathbf{v}]^\varepsilon):
    \Big(({\mathbf{v}} \otimes{\mathbf{J}})^\varepsilon-[\mathbf{v}]^\varepsilon\otimes\mathbf{J}\Big)\,\mathrm{d}x\,\mathrm{d}t\Big|\\
    &\leq C\|\nabla\mathbf{v}\|_{L^2(0,T;\mathbf{L}^2(\Omega))}
    \|(\mathbf{v}\otimes\mathbf{J})^\varepsilon-\mathbf{v}^\varepsilon\otimes\mathbf{J}\|_{L^2(\tau,T-\tau;\mathbf{L}^2(V_0))}\\
    &\quad+C\|\nabla\mathbf{v} \|_{L^2(0,T;\mathbf{L}^2(\Omega))}
    \|([\mathbf{v}]^\varepsilon-\mathbf{v}^\varepsilon)\otimes\mathbf{J}\|_{L^2(\tau,T-\tau;\mathbf{L}^2(V_0))}\\
    &\to 0,\quad\text{as }\ \varepsilon\to0.
\end{align*}
Similarly, we have  $\lim_{\varepsilon\to0}I_{\varepsilon,3}^{(i)}=0$ for $i=1,...,k$.
Finally, for the term $J_{\varepsilon,3}$, we find
\begin{align}
    J_{\varepsilon,3}
    &=-\int_0^T \int_\Omega \chi_\tau \phi_\delta \rho'(\varphi)[\mathbf{v}]^\varepsilon \cdot((\nabla\mu\cdot\nabla)[\mathbf{v}]^\varepsilon
    +\Delta\mu[\mathbf{v}]^\varepsilon) \,\mathrm{d}x\,\mathrm{d}t\notag\\
    &=-\frac{1}{2}\int_0^T\int_\Omega\chi_\tau\phi_\delta \rho'(\varphi)\nabla\mu\cdot\nabla(|[\mathbf{v}]^\varepsilon|^2)\,\mathrm{d}x\,\mathrm{d}t
    {-\int_0^T\int_\Omega\chi_\tau\phi_\delta \rho'(\varphi)\Delta\mu|[\mathbf{v}]^\varepsilon|^2\,\mathrm{d}x\,\mathrm{d}t}\notag\\
    &=\frac{1}{2}\int_0^T\int_\Omega\chi_\tau\rho'(\varphi)(\nabla\mu\cdot\nabla\phi_\delta)|[\mathbf{v}]^\varepsilon|^2\,\mathrm{d}x\,\mathrm{d}t
    {-\frac{1}{2}\int_0^T\int_\Omega\chi_\tau\phi_\delta \rho'(\varphi)\Delta\mu|[\mathbf{v}]^\varepsilon|^2\,\mathrm{d}x\,\mathrm{d}t}.\notag
\end{align}
By the regularity {$\mathbf{v}\in L^4(0,T;\mathbf{L}^4(\Omega))$} and $\mu\in L^2(\tau/2,T;H^3(\Omega))$, we can conclude
\begin{align}
    \lim_{\varepsilon\to0}J_{\varepsilon,3}
    =\frac{1}{2}\int_0^T\int_\Omega\chi_\tau\rho'(\varphi)(\nabla\mu\cdot\nabla\phi_\delta)|\mathbf{v}|^2\,\mathrm{d}x\,\mathrm{d}t
    -\frac{1}{2}\int_0^T\int_\Omega\chi_\tau\phi_\delta \rho'(\varphi)\Delta\mu|\mathbf{v}|^2\,\mathrm{d}x\,\mathrm{d}t.
    \notag
    %\label{term3}
\end{align}

Collecting the above results and using the strong form of the Cahn--Hilliard equation \eqref{strong1}, we can conclude \eqref{epsilon-limit-1}.
This completes the proof of Lemma \ref{epsilon-limit-lemma-1}.
\end{proof}

\begin{lem}
\label{epsilon-limit-lemma-2}
    For fixed $\tau$ and $\delta$, the fourth term on the left-hand side of  \eqref{mollify-eq2} satisfies
    \begin{align}
        &\lim_{\varepsilon\to0}-\int_0^T\int_\Omega\chi_\tau\phi_\delta[\mathbf{v}]^{\varepsilon}\cdot
        \Big(\xi_0 \mathrm{div}(2\nu(\varphi)\mathbb{D}\mathbf{v})^{\varepsilon}
        +\sum_{i=1}^k\xi_i \mathrm{div}(2\nu(\widetilde{\varphi})\mathbb{D}\widetilde{\mathbf{v}})_i^{\varepsilon}\Big)\,\mathrm{d}x\,\mathrm{d}t\notag\\
        &\quad=2\int_0^T \int_\Omega \chi_\tau\phi_\delta\nu(\varphi)|\mathbb{D}\mathbf{v}|^2\,\mathrm{d}x\,\mathrm{d}t
        +2\int_0^T \int_\Omega \chi_\tau\nu(\varphi)(\mathbb{D}\mathbf{v}\,\mathbf{v})\cdot\nabla\phi_\delta\,\mathrm{d}x\,\mathrm{d}t.\label{epsilon-limit2}
    \end{align}
\end{lem}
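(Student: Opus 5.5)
We will follow the same commutator scheme used in the proof of Lemma \ref{epsilon-limit-lemma-1}. First, split the viscous term by inserting the "ideal" quantity $\mathrm{div}(2\nu(\varphi)\mathbb{D}[\mathbf{v}]^\varepsilon)$:
\begin{align*}
&-\int_0^T\int_\Omega\chi_\tau\phi_\delta[\mathbf{v}]^{\varepsilon}\cdot
\Big(\xi_0 \mathrm{div}(2\nu(\varphi)\mathbb{D}\mathbf{v})^{\varepsilon}
+\sum_{i=1}^k\xi_i \mathrm{div}(2\nu(\widetilde{\varphi})\mathbb{D}\widetilde{\mathbf{v}})_i^{\varepsilon}\Big)\,\mathrm{d}x\,\mathrm{d}t\\
&\quad=\sum_{i=0}^k I^{(i)}_{\varepsilon,4}+J_{\varepsilon,4}.
\end{align*}
Here $J_{\varepsilon,4}=-\int_0^T\int_\Omega\chi_\tau\phi_\delta[\mathbf{v}]^\varepsilon\cdot\mathrm{div}(2\nu(\varphi)\mathbb{D}[\mathbf{v}]^\varepsilon)\,\mathrm{d}x\,\mathrm{d}t$. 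The terms $I^{(i)}_{\varepsilon,4}$ contain $\xi_i[\mathbf{v}]^\varepsilon$ tested against the divergence of the differences $(2\nu(\varphi)\mathbb{D}\mathbf{v})^\varepsilon-2\nu(\varphi)\mathbb{D}[\mathbf{v}]^\varepsilon$, or their boundary-chart analogues built from $\widetilde{\mathbf{v}},\widetilde{\varphi}$. This uses $\sum_i\xi_i=1$.

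To handle the $I^{(i)}_{\varepsilon,4}$, integrate by parts, which is legitimate because $\phi_\delta$ has compact support. Then bound each term by
\[
C\|\nabla(\chi_\tau\phi_\delta\xi_i[\mathbf{v}]^\varepsilon)\|_{L^2(\tau,T-\tau;\mathbf{L}^2)}\,\|(2\nu(\varphi)\mathbb{D}\mathbf{v})^\varepsilon-2\nu(\varphi)\mathbb{D}[\mathbf{v}]^\varepsilon\|_{L^2(\tau,T-\tau;\mathbf{L}^2(V_i))}.
\]
The first factor is uniformly bounded, since $\mathbf{v}\in L^2(0,T;\mathbf{H}^1_\sigma)$ and $\nabla\phi_\delta$ is bounded for fixed $\delta$.

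For the second factor, write the difference as
\[
\big[(\nu(\varphi)\mathbb{D}\mathbf{v})^\varepsilon-\nu(\varphi)\mathbb{D}\mathbf{v}\big]+\nu(\varphi)\big[\mathbb{D}\mathbf{v}-\mathbb{D}[\mathbf{v}]^\varepsilon\big].
\]
Both brackets tend to $0$ in $L^2_{t,x}$ on $\mathrm{supp}\,\phi_\delta$ by strong convergence of mollifications, via \eqref{local-convergence-1} and its chart version. The quantity $\nu(\varphi)\mathbb{D}\mathbf{v}$ lies in $L^2_{t,x}$ because $\nu(\varphi)$ is bounded. No genuine commutator estimate is needed here, since the term is linear in $\mathbf{v}$ and only the bounded coefficient $\nu(\varphi)$ appears.

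For $J_{\varepsilon,4}$, integrate by parts and use the symmetry of $\mathbb{D}$:
\[
J_{\varepsilon,4}=2\int_0^T\int_\Omega\chi_\tau\phi_\delta\nu(\varphi)|\mathbb{D}[\mathbf{v}]^\varepsilon|^2\,\mathrm{d}x\,\mathrm{d}t+2\int_0^T\int_\Omega\chi_\tau\nu(\varphi)(\mathbb{D}[\mathbf{v}]^\varepsilon\,[\mathbf{v}]^\varepsilon)\cdot\nabla\phi_\delta\,\mathrm{d}x\,\mathrm{d}t.
\]
Then $[\mathbf{v}]^\varepsilon\to\mathbf{v}$ in $L^2(\tau,T-\tau;\mathbf{H}^1)$ on the support of $\chi_\tau\phi_\delta$. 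Together with boundedness of $\nu$ and of $\nabla\phi_\delta$ for fixed $\delta$, this gives convergence to the right-hand side of \eqref{epsilon-limit2}.

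The main obstacle is the chart pieces $i\ge1$, where the mollification is the shifted/flattened one from Appendix \ref{APP-modif}. There one must check that $\nabla[\mathbf{v}]^\varepsilon\to\nabla\mathbf{v}$ in $L^2$ near the boundary, restricted to $\mathrm{supp}\,\phi_\delta$. This should follow from the appendix convergence results. The fixed positive distance of $\mathrm{supp}\,\phi_\delta$ from $\partial\Omega$ makes the argument essentially interior once $\varepsilon<\delta$.
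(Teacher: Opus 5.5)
Your proposal is correct and follows essentially the same route as the paper: you use the same splitting into $\sum_{i=0}^k I^{(i)}_{\varepsilon,4}+J_{\varepsilon,4}$ via $\sum_i\xi_i=1$, integrate the differences by parts against $\nabla(\chi_\tau\phi_\delta\xi_i[\mathbf{v}]^\varepsilon)$ (bounded in $L^2$), and pass to the limit by strong $L^2$ convergence of the mollifications. The only difference is cosmetic: you insert $2\nu(\varphi)\mathbb{D}[\mathbf{v}]^\varepsilon$ where the paper inserts $2\nu(\varphi)[\mathbb{D}\mathbf{v}]^\varepsilon$, and this is immaterial since both converge to $2\nu(\varphi)\mathbb{D}\mathbf{v}$ in $L^2_{\mathrm{loc}}(0,T;\mathbf{L}^2(\Omega))$.
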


\begin{proof}
   It is easy to check that
    \begin{align}
        &-\int_0^T\int_\Omega\chi_\tau\phi_\delta[\mathbf{v}]^{\varepsilon}
        \cdot\Big(\xi_0 \mathrm{div}(2\nu(\varphi)\mathbb{D}\mathbf{v})^{\varepsilon}
        +\sum_{i=1}^k\xi_i \mathrm{div}(2\nu(\widetilde{\varphi})\mathbb{D}\widetilde{\mathbf{v}})_i^{\varepsilon}\Big)\,\mathrm{d}x\,\mathrm{d}t\notag\\
        &\quad=-\int_0^T\int_\Omega\chi_\tau\phi_\delta[\mathbf{v}]^{\varepsilon}
        \cdot\Big(\xi_0 \mathrm{div}(2\nu(\varphi)\mathbb{D}\mathbf{v})^{\varepsilon}
        +\sum_{i=1}^k\xi_i \mathrm{div}(2\nu(\widetilde{\varphi})\mathbb{D}\widetilde{\mathbf{v}})_i^{\varepsilon}
        -\text{div}(2\nu(\varphi)[\mathbb{D}\mathbf{v}]^{\varepsilon})\Big)\,\mathrm{d}x\,\mathrm{d}t\notag\\
        &\qquad-\int_0^T\int_\Omega\chi_\tau\phi_\delta[\mathbf{v}]^{\varepsilon}
        \cdot\text{div}(2\nu(\varphi)[\mathbb{D}\mathbf{v}]^{\varepsilon})\,\mathrm{d}x\,\mathrm{d}t\notag\\
        &\quad=-\int_0^T\int_\Omega\chi_\tau\phi_\delta\xi_0[\mathbf{v}]^{\varepsilon}
        \cdot\mathrm{div}\Big( (2\nu(\varphi)\mathbb{D}\mathbf{v})^{\varepsilon}
        -2\nu(\varphi)[\mathbb{D}\mathbf{v}]^{\varepsilon}\Big)\,\mathrm{d}x\,\mathrm{d}t\notag\\
        &\qquad-\sum_{i=1}^k\int_0^T\int_\Omega\chi_\tau\phi_\delta\xi_i[\mathbf{v}]^{\varepsilon}
        \cdot\mathrm{div}\Big( (2\nu(\widetilde{\varphi})\mathbb{D}\widetilde{\mathbf{v}})^{\varepsilon}_i
        -2\nu(\varphi)[\mathbb{D}\mathbf{v}]^{\varepsilon}\Big)\,\mathrm{d}x\,\mathrm{d}t\notag\\
        &\qquad+\left(2\int_0^T \int_\Omega \chi_\tau\phi_\delta\nu(\varphi)[\mathbb{D}\mathbf{v}]^{\varepsilon}:\mathbb{D}[\mathbf{v}]^\varepsilon\,\mathrm{d}x\,\mathrm{d}t
        +2\int_0^T \int_\Omega \chi_\tau\nu(\varphi)([\mathbb{D}\mathbf{v}]^{\varepsilon} [\mathbf{v}]^{\varepsilon})\cdot\nabla\phi_\delta\,\mathrm{d}x\,\mathrm{d}t\right)\notag\\
        &\quad:=\sum_{i=0}^k I^{(i)}_{\varepsilon,4}+J_{\varepsilon,4}.\label{term4}
    \end{align}
   For the term $I_{\varepsilon,4}^{(0)}$, similarly to \eqref{0203-I2-1}, we have
    \begin{align}
        |I_{\varepsilon,4}^{(0)}|
        &= \Big|\int_0^T \int_\Omega \nabla(\chi_\tau \phi_\delta \xi_0[\mathbf{v}]^{\varepsilon})
        :\Big((2\nu(\varphi) \mathbb{D}\mathbf{v})^{\varepsilon}-2\nu(\varphi)[\mathbb{D}\mathbf{v}]^{\varepsilon}\Big)\,\mathrm{d}x\,\mathrm{d}t\Big|\notag\\
        &\leq C\|\nabla\mathbf{v}\|_{ L^2(0,T;\mathbf{L}^2(\Omega))}\|(2\nu(\varphi)\mathbb{D}\mathbf{v})^{\varepsilon}
        -2\nu(\varphi)[\mathbb{D}\mathbf{v}]^{\varepsilon}\|_{L^{2}(\tau,T-\tau;\mathbf{L}^{2}(V_0))}\notag\\
        &\leq C\|\nabla\mathbf{v}\|_{L^2(0,T;\mathbf{L}^2(\Omega))}\|(2\nu(\varphi)\mathbb{D}\mathbf{v})^{\varepsilon}
        -2\nu(\varphi)(\mathbb{D}\mathbf{v})^{\varepsilon}\|_{L^{2}(\tau,T-\tau;\mathbf{L}^{2}(V_0))}\notag\\
        &\quad+C\|\nabla\mathbf{v}\|_{L^2(0,T;\mathbf{L}^2(\Omega))}\|2\nu(\varphi)((\mathbb{D}\mathbf{v})^{\varepsilon}
        -[\mathbb{D}\mathbf{v}]^{\varepsilon})\|_{L^{2}(\tau,T-\tau;\mathbf{L}^{2}(V_0))}.\notag
    \end{align}
    Using the facts $\nu(\varphi)\in L^\infty(0,T;L^\infty(\Omega))$ and $\nabla\mathbf{v}\in L^2(0,T;\mathbf{L}^2(\Omega))$,
    we can conclude that $\lim_{\varepsilon\to0}I_{\varepsilon,4}^{(0)}=0$.
    Similarly, it holds $\lim_{\varepsilon\to0}I_{\varepsilon,4}^{(i)}=0$ for $i=1,...,k$.
    Besides, since $\mathbf{v}\in L^2(0,T;\mathbf{H}_\sigma^1(\Omega))$,
    we can pass to the limit as $\varepsilon\to0$ for $J_{\varepsilon,4}$ and obtain
    \begin{align}
        \lim_{\varepsilon\to0}J_{\varepsilon,4}
        = 2\int_0^T \int_\Omega \chi_\tau\phi_\delta\nu(\varphi)|\mathbb{D}\mathbf{v}|^2\,\mathrm{d}x\,\mathrm{d}t
        +2\int_0^T \int_\Omega \chi_\tau(\nu(\varphi)\mathbb{D}\mathbf{v} \, \mathbf{v})\cdot\nabla\phi_\delta\,\mathrm{d}x\,\mathrm{d}t.\notag
    \end{align}
   Hence, we infer from \eqref{term4} that \eqref{epsilon-limit2} holds. This completes the proof of Lemma \ref{epsilon-limit-lemma-2}.
\end{proof}

\begin{lem}
\label{epsilon-limit-lemma-3}
    For fixed $\tau$ and $\delta$, the fifth term on the left-hand side of \eqref{mollify-eq2} satisfies
    \begin{align}
       & \lim_{\varepsilon\to0}-\int_0^T\int_\Omega\chi_\tau\phi_\delta[\mathbf{v}]^{\varepsilon}\cdot\Big(\xi_0(\mu\nabla\varphi)^{\varepsilon}
       +\sum_{i=1}^k\xi_i (\widetilde{\mu}\nabla\widetilde{\varphi})_i^{\varepsilon}\Big)\,\mathrm{d}x\,\mathrm{d}t
       =-\int_0^T\int_\Omega \chi_\tau \phi_\delta \mu\mathbf{v}\cdot\nabla\varphi\,\mathrm{d}x\,\mathrm{d}t.\label{epsilon-limit-3}
    \end{align}
\end{lem}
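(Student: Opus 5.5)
We would follow the same pattern as Lemmas \ref{epsilon-limit-lemma-1} and \ref{epsilon-limit-lemma-2}: split the mollified capillary term into a commutator-type remainder, which vanishes, and a main term. We write
\begin{align*}
&-\int_0^T\int_\Omega\chi_\tau\phi_\delta[\mathbf{v}]^{\varepsilon}\cdot\Big(\xi_0(\mu\nabla\varphi)^{\varepsilon}
       +\sum_{i=1}^k\xi_i (\widetilde{\mu}\nabla\widetilde{\varphi})_i^{\varepsilon}\Big)\,\mathrm{d}x\,\mathrm{d}t\\
&\quad=-\int_0^T\int_\Omega\chi_\tau\phi_\delta[\mathbf{v}]^{\varepsilon}\cdot[\mu\nabla\varphi]^{\varepsilon}\,\mathrm{d}x\,\mathrm{d}t,
\end{align*}
where $[\mu\nabla\varphi]^\varepsilon$ denotes the global mollification from Appendix \ref{APP-modif} (the combination of the interior mollifier and the shifted local mollifiers weighted by the partition of unity). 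It is then enough to show that $[\mathbf{v}]^\varepsilon\to\mathbf{v}$ and $[\mu\nabla\varphi]^\varepsilon\to\mu\nabla\varphi$ strongly in dual Lebesgue spaces on the support of $\chi_\tau$, which is contained in $(\tau,T-\tau)$.

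\textbf{Step 1 (integrability).} On $(\tau/2,T)$, Proposition \ref{weak-solution}-(5) gives $\mu\in L^\infty(\tau/2,T;H^1(\Omega))\hookrightarrow L^\infty(\tau/2,T;L^6(\Omega))$ and $\varphi\in L^\infty(\tau/2,T;W^{2,6}(\Omega))$, so $\nabla\varphi\in L^\infty(\Omega\times(\tau/2,T))$. Hence $\mu\nabla\varphi\in L^\infty(\tau/2,T;\mathbf{L}^6(\Omega))$. For the velocity, $\mathbf{v}\in L^2(0,T;\mathbf{H}^1_\sigma(\Omega))\subset L^2(0,T;\mathbf{L}^6(\Omega))$, and also $\mathbf{v}\in L^\infty(0,T;\mathbf{L}^2(\Omega))$. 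In particular the product $\mathbf{v}\cdot\mu\nabla\varphi$ lies in $L^1(\Omega\times(\tau,T-\tau))$, using for instance the pairing $L^2_tL^{6/5}_x\times L^2_tL^6_x$.

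\textbf{Step 2 (convergence of the mollifications).} By the standard properties of the global mollifier (the interior part $f^\varepsilon$ and the shifted boundary parts $\widetilde f_i^\varepsilon$), we have $[f]^\varepsilon\to f$ strongly in $L^s(\tau,T-\tau;L^p(\Omega))$ whenever $f\in L^s(\tau/2,T;L^p(\Omega))$ with $s,p<\infty$. This is the same fact used in the previous lemmas. Applying it gives $[\mathbf{v}]^\varepsilon\to\mathbf{v}$ in $L^2(\tau,T-\tau;\mathbf{L}^6(\Omega))$ and $[\mu\nabla\varphi]^\varepsilon\to\mu\nabla\varphi$ in $L^2(\tau,T-\tau;\mathbf{L}^{6}(\Omega))$. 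Since $\Omega$ and the time interval are bounded, this implies convergence in $L^2_tL^{6/5}_x$.

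\textbf{Step 3 (passing to the limit).} Because $\chi_\tau\phi_\delta$ is bounded, we estimate the difference by
\[
\|[\mathbf{v}]^\varepsilon-\mathbf{v}\|_{L^2L^6}\|[\mu\nabla\varphi]^\varepsilon\|_{L^2L^{6/5}}+\|\mathbf{v}\|_{L^2L^6}\|[\mu\nabla\varphi]^\varepsilon-\mu\nabla\varphi\|_{L^2L^{6/5}},
\]
and both terms tend to $0$. This yields \eqref{epsilon-limit-3}.

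\textbf{Main obstacle.} The only delicate point is the mollification near the boundary: the shifted mollifiers $\widetilde{(\cdot)}_i^\varepsilon$ must remain inside $\Omega$, and their strong convergence must hold in the patches $V_i$. We take this from the appendix construction, following \cite{CLWX2020}. Unlike the convective and flux terms, no commutator estimate is needed here, since $\mu\nabla\varphi$ carries no derivative of $\mathbf{v}$.
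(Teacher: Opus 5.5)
Your proposal is correct and follows essentially the same route as the paper: you identify the bracketed sum with the global mollification $[\mu\nabla\varphi]^\varepsilon$ and pass to the limit using the strong convergence \eqref{mollify-convergence} of both mollified factors, with no commutator estimate needed. The only difference is cosmetic: the paper uses the weaker integrability $\mu\nabla\varphi\in L^2(\tau/2,T;\mathbf{L}^2(\Omega))$, obtained from $\varphi\in L^\infty(\tau/2,T;H^2(\Omega))$, paired with $[\mathbf{v}]^\varepsilon\to\mathbf{v}$ in $L^2$, whereas you pair $L^2_tL^6_x$ with $L^2_tL^{6/5}_x$.
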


\begin{proof}
From the regularity properties $\mu\in L^\infty(\tau/2,T;H^1(\Omega))$ and $\varphi\in L^\infty(\tau/2,T;H^2(\Omega))$,
we see that
\[\mu\nabla\varphi\in L^2(\tau/2,T;\mathbf{L}^2(\Omega)).\]
Then, by \eqref{mollify-convergence}, we can pass to the limit as $\varepsilon\to0$ on the left-hand side of \eqref{epsilon-limit-3} and obtain
\begin{align}
     & \lim_{\varepsilon\to0}-\int_0^T\int_\Omega\chi_\tau\phi_\delta[\mathbf{v}]^{\varepsilon}\cdot\Big(\xi_0(\mu\nabla\varphi)^{\varepsilon}
     +\sum_{i=1}^k\xi_i (\widetilde{\mu}\nabla\widetilde{\varphi})_i^{\varepsilon}\Big)\,\mathrm{d}x\,\mathrm{d}t\notag\\
     &\quad=\lim_{\varepsilon\to0}-\int_0^T\int_\Omega\chi_\tau\phi_\delta[\mathbf{v}]^{\varepsilon}\cdot[\mu\nabla\varphi]^\varepsilon\,\mathrm{d}x\,\mathrm{d}t
     =-\int_0^T\int_\Omega \chi_\tau \phi_\delta \mu\mathbf{v}\cdot\nabla\varphi\,\mathrm{d}x\,\mathrm{d}t.\notag
\end{align}
This completes the proof of Lemma \ref{epsilon-limit-lemma-3}.
\end{proof}

\begin{lem}
\label{epsilon-limit-lemma-4}
 For fixed $\tau$ and $\delta$, the sixth term on the left-hand side of \eqref{mollify-eq2} satisfies
    \begin{align}
\lim_{\varepsilon\to0}\int_0^T\int_\Omega\chi_\tau\phi_\delta[\mathbf{v}]^{\varepsilon}
\cdot\Big(\xi_0\nabla p^{\varepsilon}+\sum_{i=1}^k \xi_i \nabla \widetilde{p}^{\varepsilon}_i\Big)\,\mathrm{d}x\,\mathrm{d}t
=-\int_0^T\int_\Omega\chi_\tau(\mathbf{v}\cdot\nabla\phi_\delta)p\,\mathrm{d}x\,\mathrm{d}t.\label{epsilon-limit-4}
    \end{align}
\end{lem}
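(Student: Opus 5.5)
The plan follows the pattern of Lemmas \ref{epsilon-limit-lemma-2} and \ref{epsilon-limit-lemma-3}. We first replace the localized mollified gradient by the gradient of the global mollification, isolating a commutator, and then integrate by parts using the incompressibility of $\mathbf{v}$.

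\emph{Step 1: splitting.} Since $\sum_{i=0}^k\xi_i=1$ on $\Omega$, we write
\begin{align*}
\xi_0\nabla p^{\varepsilon}+\sum_{i=1}^k \xi_i \nabla \widetilde{p}^{\varepsilon}_i
=\nabla[p]^\varepsilon-\sum_{i=0}^k\nabla\xi_i\, p_i^\varepsilon ,
\end{align*}
where $[p]^\varepsilon=\xi_0p^\varepsilon+\sum_{i\ge1}\xi_i\widetilde p_i^\varepsilon$ is the global mollification from Appendix \ref{APP-modif} and $p_0^\varepsilon:=p^\varepsilon$, $p_i^\varepsilon:=\widetilde p_i^\varepsilon$.

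\emph{Step 2: the main term.} Integrating by parts and using $\phi_\delta\in C_0^1(\Omega)$, so that no boundary terms appear,
\begin{align*}
\int_0^T\!\!\int_\Omega\chi_\tau\phi_\delta[\mathbf{v}]^\varepsilon\cdot\nabla[p]^\varepsilon
=-\int_0^T\!\!\int_\Omega\chi_\tau[p]^\varepsilon\big(\nabla\phi_\delta\cdot[\mathbf{v}]^\varepsilon+\phi_\delta\,\mathrm{div}[\mathbf{v}]^\varepsilon\big).
\end{align*}
We have $p\in L^{q'}(\tau/2,T;L^{r'}(\Omega))$ on $\operatorname{supp}\chi_\tau\subset[\tau,T-\tau]$ and $\mathbf{v}\in L^q(0,T;\mathbf{W}^{1,r}_0(\Omega))$. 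The mollification converges strongly in these spaces, and $\nabla\phi_\delta$ is bounded for fixed $\delta$. Therefore the right-hand side converges to
\begin{align*}
-\int_0^T\!\!\int_\Omega\chi_\tau p\,(\mathbf{v}\cdot\nabla\phi_\delta)\,\mathrm{d}x\,\mathrm{d}t-\int_0^T\!\!\int_\Omega\chi_\tau\phi_\delta\, p\,\mathrm{div}\,\mathbf{v}\,\mathrm{d}x\,\mathrm{d}t,
\end{align*}
and the last integral vanishes because $\mathrm{div}\,\mathbf{v}=0$.

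\emph{Step 3: the commutator terms, the expected obstacle.} These terms are $-\sum_i\int\chi_\tau\phi_\delta[\mathbf{v}]^\varepsilon\cdot\nabla\xi_i\,p_i^\varepsilon$. Each $p_i^\varepsilon$ converges to $p$ in $L^{q'}L^{r'}$ on $V_i$, up to the shift-and-mollify construction whose error vanishes by the properties \eqref{mollify-convergence}. Also $[\mathbf{v}]^\varepsilon\to\mathbf{v}$ in $L^qL^{r}$, and $r'\le 2\le r$. So the sum converges to $-\int\chi_\tau\phi_\delta p\,\mathbf{v}\cdot\nabla\big(\sum_i\xi_i\big)=0$.

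The delicate point is that the translated mollifications $\widetilde p_i^\varepsilon$ near the boundary are only defined through the shifted functions. Their convergence to $p$ on $V_i\cap\Omega$ is therefore the one needing the appendix's mollifier properties, exactly as in \cite[Section 4]{CLWX2020}. It uses only $p\in L^{q'}_{\mathrm{loc}}L^{r'}$ and requires no additional boundary regularity.

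Combining Steps 2 and 3 yields \eqref{epsilon-limit-4}.
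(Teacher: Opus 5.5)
Your proposal is correct and essentially matches the paper's proof. Like you, the paper splits off $\nabla[p]^\varepsilon$ and integrates the main term by parts. It then passes to the limit $-\int\chi_\tau(\mathbf{v}\cdot\nabla\phi_\delta)p$ using $p\in L^{q'}L^{r'}$, $\mathbf{v}\in L^q\mathbf{W}^{1,r}_0$ and $\mathrm{div}\,\mathbf{v}=0$. The only cosmetic difference is in the remainder: the paper writes it as $\sum_i\int\chi_\tau\phi_\delta\xi_i[\mathbf{v}]^\varepsilon\cdot\nabla(\widetilde p_i^\varepsilon-[p]^\varepsilon)$ and integrates by parts, while you write it as $-\sum_i\nabla\xi_i\,p_i^\varepsilon$ and use $\sum_i\nabla\xi_i=0$. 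These are the same quantity because $\sum_i\xi_i(p_i^\varepsilon-[p]^\varepsilon)=0$, and both vanish by the local $L^{r'}$ convergence of the shifted mollifications of $p$.
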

\begin{proof}
  A direct calculation yields that
    \begin{align}
        &\int_0^T\int_\Omega \chi_\tau\phi_\delta[\mathbf{v}]^{\varepsilon}
        \cdot\Big(\xi_0\nabla p^{\varepsilon}+\sum_{i=1}^k \xi_i \nabla \widetilde{p}^{\varepsilon}_i\Big)\,\mathrm{d}x\,\mathrm{d}t\notag\\
        &\quad=\int_0^T\int_\Omega \chi_\tau\phi_\delta[\mathbf{v}]^{\varepsilon}
        \cdot\Big(\xi_0\nabla p^{\varepsilon}+\sum_{i=1}^k \xi_i \nabla \widetilde{p}^{\varepsilon}_i-\nabla[p]^{\varepsilon}\Big)\,\mathrm{d}x\,\mathrm{d}t
        +\int_0^T\int_\Omega \chi_\tau\phi_\delta[\mathbf{v}]^{\varepsilon}\cdot\nabla[p]^{\varepsilon}\,\mathrm{d}x\,\mathrm{d}t\notag\\
        &\quad=\int_0^T\int_\Omega \chi_\tau\phi_\delta[\mathbf{v}]^{\varepsilon}
        \cdot\Big(\xi_0\nabla p^{\varepsilon}+\sum_{i=1}^k \xi_i \nabla \widetilde{p}^{\varepsilon}_i-\nabla[p]^{\varepsilon}\Big)\,\mathrm{d}x\,\mathrm{d}t
        -\int_0^T\int_\Omega\chi_\tau\text{div}([\mathbf{v}]^{\varepsilon}\phi_\delta)[p]^{\varepsilon}\,\mathrm{d}x\,\mathrm{d}t\notag\\
        &\quad:=I_{\varepsilon,5} +J_{\varepsilon,5}.\label{term6}
    \end{align}
Using the regularity properties
\begin{align}
    p\in L^{q'}(\tau/2,T;L^{r'}(\Omega))
\quad\text{and} \quad\mathbf{v}\in L^q(0,T;\mathbf{W}_0^{1,r}(\Omega)),\label{0207-1}
\end{align}
we deduce that
\begin{align}
    |I_{\varepsilon,5}|
    &\leq \Big|\int_0^T\int_\Omega\chi_\tau\phi_\delta\xi_0[\mathbf{v}]^{\varepsilon}
    \cdot\nabla(p^{\varepsilon}-[p]^{\varepsilon})\,\mathrm{d}x\,\mathrm{d}t\Big|
    +\sum_{i=1}^k\Big|\int_0^T\int_\Omega \chi_\tau\phi_\delta  \xi_i [\mathbf{v}]^{\varepsilon}
    \cdot \nabla(\widetilde{p}_i^{\varepsilon}-[p]^{\varepsilon})\,\mathrm{d}x\,\mathrm{d}t\Big|\notag\\
    &=\Big|\int_0^T\int_\Omega\text{div}(\chi_\tau\phi_\delta\xi_0[\mathbf{v}]^{\varepsilon})(p^{\varepsilon}-[p]^{\varepsilon})\,\mathrm{d}x\,\mathrm{d}t\Big|
    +\sum_{i=1}^k\Big|\int_0^T \int_\Omega\text{div}(\chi_\tau\phi_\delta\xi_i[\mathbf{v}]^{\varepsilon} )
    (\widetilde{p}_i^{\varepsilon}-[p]^{\varepsilon})\,\mathrm{d}x\,\mathrm{d}t\Big|\notag\\
    &\leq C\int_{\tau}^{T-\tau}\|\text{div}(\chi_\tau\phi_\delta\xi_0[\mathbf{v}]^{\varepsilon})\|_{L^{r}}
    \|(p^{\varepsilon}-[p]^{\varepsilon})\|_{L^{r'}(V_0)}\,\mathrm{d}t\notag\\
    &\quad+C\sum_{i=1}^k\int_{\tau}^{T-\tau}\|\text{div}(\chi_\tau\phi_\delta\xi_i[\mathbf{v}]^{\varepsilon})\|_{L^{r}}
    \|(\widetilde{p}_i^{\varepsilon}-[p]^{\varepsilon})\|_{L^{r'}(V_i)}\,\mathrm{d}t\notag\\
    &\to 0\quad\text{as }\ \varepsilon\to0.\label{I6}
\end{align}
For the term $J_{\varepsilon,5}$, by \eqref{0207-1} and \eqref{mollify-convergence}, we obtain
% \[\int_\Omega\chi_\tau\text{div}([\mathbf{v}]^\varepsilon\phi_\delta)[p]^\varepsilon\,\mathrm{d}x
% \leq C \|\text{div}([\mathbf{v}]^\varepsilon\phi_\delta)\|_{L^3}\|[p]^\varepsilon\|_{L^{\frac{3}{2}}}
% \leq C\|\mathbf{v}\|_{\mathbf{W}^{1,3}}\|p\|_{L^{\frac{3}{2}}}, \]
\begin{align}
    \lim_{\varepsilon\to0}J_{\varepsilon,5}
    =-\int_0^T\int_\Omega\chi_\tau \text{div}(\mathbf{v}\phi_\delta)p\,\mathrm{d}x\,\mathrm{d}t
    = -\int_0^T\int_\Omega\chi_\tau (\mathbf{v}\cdot\nabla\phi_\delta)p\,\mathrm{d}x\,\mathrm{d}t.\label{J5}
\end{align}
From \eqref{term6}, \eqref{I6} and \eqref{J5},
we can conclude \eqref{epsilon-limit-4}. This completes the proof of Lemma \ref{epsilon-limit-lemma-4}.
\end{proof}

\subsection{\texorpdfstring{Passage to the limit as $\delta,\tau\to0$}{Passage to the limit as \delta, \tau \to 0}}
%\label{energy-limit}

Thanks to Lemmas \ref{epsilon-limit-lemma-1}--\ref{epsilon-limit-lemma-4},
we can pass to the limit as $\varepsilon\to0$ in \eqref{mollify-eq2}. Adding the resultant to \eqref{mollify-eq}, we obtain
\begin{align}
    &-\frac{1}{2}\int_0^T\int_\Omega\chi_\tau'\phi_\delta \rho(\varphi)|\mathbf{v}|^2\,\mathrm{d}x\,\mathrm{d}t
    +2\int_0^T \int_\Omega \chi_\tau\phi_\delta\nu(\varphi)|\mathbb{D}\mathbf{v}|^2\,\mathrm{d}x\,\mathrm{d}t\notag\\
    &\qquad-\frac{1}{2}\int_0^T\int_\Omega\chi_\tau\rho(\varphi)(\mathbf{v}\cdot\nabla\phi_\delta)|\mathbf{v}|^2\,\mathrm{d}x\,\mathrm{d}t
    +\frac{1}{2}\int_0^T\int_\Omega\chi_\tau\rho'(\varphi)(\nabla\mu\cdot\nabla\phi_\delta)|\mathbf{v}|^2\,\mathrm{d}x\,\mathrm{d}t\notag\\
        &\qquad+2\int_0^T \int_\Omega \chi_\tau\nu(\varphi)(\mathbb{D}\mathbf{v}\,\mathbf{v})\cdot\nabla\phi_\delta\,\mathrm{d}x\,\mathrm{d}t
        -\int_0^T\int_\Omega\chi_\tau(\mathbf{v}\cdot\nabla\phi_\delta)p\,\mathrm{d}x\,\mathrm{d}t\notag\\
        &\qquad+\int_0^{T}\int_\Omega\chi_\tau\phi_\delta(-\Delta\varphi+\Psi'(\varphi))\partial_t\varphi\,\mathrm{d}x\,\mathrm{d}t
        -\int_0^T\int_\Omega\chi_\tau\phi_\delta\mu\Delta\mu\,\mathrm{d}x\,\mathrm{d}t
       =0.\label{epsilon-limit-form}
\end{align}
With identity \eqref{epsilon-limit-form} in hand, we are in a position to prove Theorem \ref{criterion}.
\smallskip

\noindent\textbf{Proof of Theorem \ref{criterion}.} The proof consists of two limiting procedures.
\smallskip

\emph{Step 1: Passage to the limit as $\delta\to0$.}
By the Lebesgue dominated convergence theorem, we have
\begin{align}
    &\lim_{\delta\to0}\left(-\frac{1}{2}\int_0^T\int_\Omega\chi_\tau'\phi_\delta \rho(\varphi)|\mathbf{v}|^2\,\mathrm{d}x\,\mathrm{d}t
    +2\int_0^T \int_\Omega \chi_\tau\phi_\delta\nu(\varphi)|\mathbb{D}\mathbf{v}|^2\,\mathrm{d}x\,\mathrm{d}t\right.\notag\\
    &\qquad\left.+\int_0^{T}\int_\Omega \chi_\tau\phi_\delta(-\Delta\varphi+\Psi'(\varphi))\partial_t\varphi\,\mathrm{d}x\,\mathrm{d}t
    -\int_0^T\int_\Omega\chi_\tau\phi_\delta\mu\Delta\mu\,\mathrm{d}x\,\mathrm{d}t\right)\notag\\
    &\quad=-\frac{1}{2}\int_0^T\int_\Omega\chi_\tau'\rho(\varphi)|\mathbf{v}|^2\,\mathrm{d}x\,\mathrm{d}t
    +2\int_0^T \int_\Omega \chi_\tau\nu(\varphi)|\mathbb{D}\mathbf{v}|^2\,\mathrm{d}x\,\mathrm{d}t\notag\\
    &\qquad+\int_0^{T}\int_\Omega\chi_\tau(-\Delta\varphi+\Psi'(\varphi))\partial_t\varphi\,\mathrm{d}x\,\mathrm{d}t
    -\int_0^T\int_\Omega\chi_\tau\mu\Delta\mu\,\mathrm{d}x\,\mathrm{d}t\notag\\
    &\quad=-\frac{1}{2}\int_0^T\int_\Omega\chi_\tau'\rho(\varphi)|\mathbf{v}|^2\,\mathrm{d}x\,\mathrm{d}t
    +2\int_0^T \int_\Omega \chi_\tau\nu(\varphi)|\mathbb{D}\mathbf{v}|^2\,\mathrm{d}x\,\mathrm{d}t\notag\\
    &\qquad-\int_0^{T}\int_\Omega\chi_\tau'\Big(\frac{|\nabla\varphi|^2}{2}+\Psi(\varphi)\Big)\,\mathrm{d}x\,\mathrm{d}t
    +\int_0^T\int_\Omega\chi_\tau |\nabla\mu|^2\,\mathrm{d}x\,\mathrm{d}t,\label{delta-limit-1}
\end{align}
where we use the chain rule of maximal monotone operators (see \cite[Chapter 4, Lemma 4.4]{Barbu}) to get
\[\int_0^T\int_\Omega\chi_\tau \Psi_0'(\varphi)\partial_t\varphi\,\mathrm{d}x\,\mathrm{d}t=\int_0^T\chi_\tau\frac{\mathrm{d}}{\mathrm{d}t}\int_\Omega\Psi_0(\varphi)\,\mathrm{d}x\,\mathrm{d}t=-\int_0^T\int_\Omega\chi_\tau'\Psi_0(\varphi)\,\mathrm{d}x\,\mathrm{d}t.\]
Moreover, by Lemma \ref{Hardy}, we find
\begin{align}
    &\Big|\frac{1}{2}\int_0^T\int_\Omega\chi_\tau\rho(\varphi)(\mathbf{v}\cdot\nabla\phi_\delta)|\mathbf{v}|^2\,\mathrm{d}x\,\mathrm{d}t\Big|
    +\Big|\frac{1}{2}\int_0^T\int_\Omega\chi_\tau\rho'(\varphi)(\nabla\mu\cdot\nabla\phi_\delta)|\mathbf{v}|^2\,\mathrm{d}x\,\mathrm{d}t\Big|\notag\\
        &\qquad+\Big|2\int_0^T \int_\Omega \chi_\tau\nu(\varphi)(\mathbb{D}\mathbf{v}\,\mathbf{v})\cdot\nabla\phi_\delta\,\mathrm{d}x\,\mathrm{d}t\Big|
        +\Big|\int_0^T\int_\Omega \chi_\tau(\mathbf{v}\cdot\nabla\phi_\delta)p\,\mathrm{d}x\,\mathrm{d}t\Big|\notag\\
        &\quad\leq C\Big\|\frac{\mathbf{v}}{\mathrm{dist}(x,\partial\Omega)}\Big\|_{L^q(\tau,T-\tau;\mathbf{L}^r(\Omega))}
        \|\mathbf{v}\|_{L^{2q'}(\tau,T-\tau;\mathbf{L}^{2r'}(\Omega_\delta))}^2\notag\\
          &\qquad +C\Big\|\frac{\mathbf{v}}{\mathrm{dist}(x,\partial\Omega)}\Big\|_{L^q(\tau,T-\tau;\mathbf{L}^r(\Omega))}
          \|\nabla\mu\|_{L^2(\tau,T-\tau; \mathbf{L}^2(\Omega_\delta))}
          \|\mathbf{v}\|_{L^\frac{2q}{q-2}(\tau,T-\tau;\mathbf{L}^{\frac{2r}{r-2}}(\Omega_\delta))}\notag\\
         &\qquad+\Big\|\frac{\mathbf{v}}{\mathrm{dist}(x,\partial\Omega)}\Big\|_{L^{q}(\tau,T-\tau;\mathbf{L}^r(\Omega))}
         \|\mathbb{D}\mathbf{v} \|_{L^{q'}(\tau,T-\tau;\mathbf{L}^{r'}(\Omega_\delta))}\notag\\
        &\qquad+C\Big\|\frac{\mathbf{v}}{\mathrm{dist}(x,\partial\Omega)}\Big\|_{L^{q}(\tau,T-\tau;\mathbf{L}^r(\Omega))}
        \|p\|_{L^{q'}(\tau,T-\tau;{L}^{r'}(\Omega_\delta))}\notag\\
        &\quad\leq C\|\mathbf{v}\|_{L^q(0,T;\mathbf{W}_0^{1,r}(\Omega))}\mathcal{F}_\delta
        \to0\quad\text{as }\ \delta\to0,
        \label{delta-limit-2}
\end{align}
where $\Omega_\delta:=\{x\in\Omega:\text{dist}(x,\partial\Omega)<\delta\}$ and the function $\mathcal{F}_\delta$ is given by
\begin{align*}
    \mathcal{F}_\delta:
    &=\|\mathbf{v}\|_{L^{2q'}(0,T;\mathbf{L}^{2r'}(\Omega_\delta))}^2
    +\|\nabla\mu\|_{L^2(0,T; \mathbf{L}^2(\Omega_\delta))}
    \|\mathbf{v}\|_{L^\frac{2q}{q-2}(0,T;\mathbf{L}^\frac{2r}{r-2}(\Omega_\delta))}\\
    &\quad+\|\mathbb{D}\mathbf{v}\|_{L^{q'}(0,T;\mathbf{L}^{r'}(\Omega_\delta))}
    +\|p\|_{L^{q'}(\tau,T-\tau;{L}^{r'}(\Omega_\delta))}.
\end{align*}
According to \eqref{delta-limit-1} and \eqref{delta-limit-2},
we can pass to the limit as $\delta\to0$ in \eqref{epsilon-limit-form} and obtain
\begin{align}
    &\frac{1}{2}\int_0^T\int_\Omega\chi_\tau' \rho(\varphi)|\mathbf{v}|^2\,\mathrm{d}x\,\mathrm{d}t
    +\int_0^T\int_\Omega\chi_\tau' \Big(\frac{1}{2}|\nabla\varphi|^2+\Psi(\varphi)\Big)\,\mathrm{d}x\,\mathrm{d}t\notag\\
    &\qquad-2\int_0^T \int_\Omega \chi_\tau\nu(\varphi)|\mathbb{D}\mathbf{v}|^2\,\mathrm{d}x\,\mathrm{d}t
    -\int_0^T\int_\Omega\chi_\tau |\nabla\mu|^2\,\mathrm{d}x\,\mathrm{d}t=0.\label{delta-limit-form}
\end{align}

\emph{Step 2: Passage to the limit as $\tau\to0$.}
The Lebesgue dominated convergence theorem yields
\begin{align}
    &\lim_{\tau\to0}\Big(-2\int_0^T \int_\Omega \chi_\tau\nu(\varphi)|\mathbb{D}\mathbf{v}|^2\,\mathrm{d}x\,\mathrm{d}t
    -\int_0^T\int_\Omega\chi_\tau |\nabla\mu|^2\,\mathrm{d}x\,\mathrm{d}t\Big)\notag\\
    &\quad=-2\int_0^T \int_\Omega \nu(\varphi)|\mathbb{D}\mathbf{v}|^2\,\mathrm{d}x\,\mathrm{d}t
    -\int_0^T\int_\Omega|\nabla\mu|^2\,\mathrm{d}x\,\mathrm{d}t.\label{tau-limit-1}
\end{align}
Moreover, we have
\begin{align}
    &\frac{1}{2}\int_0^T\int_\Omega\chi_\tau' \rho(\varphi)|\mathbf{v}|^2\,\mathrm{d}x\,\mathrm{d}t
    +\int_0^T\int_\Omega\chi_\tau' \Big(\frac{1}{2}|\nabla\varphi|^2+\Psi(\varphi)\Big)\,\mathrm{d}x\,\mathrm{d}t\notag\\
    &\quad=\frac{1}{\tau}\int_\tau^{2\tau}\int_\Omega\rho(\varphi)\frac{|\mathbf{v}|^2}{2}\,\mathrm{d}x\,\mathrm{d}t
    +\frac{1}{\tau}\int_{\tau}^{2\tau}\int_\Omega \Big(\frac{1}{2}|\nabla\varphi|^2+\Psi(\varphi)\Big)\,\mathrm{d}x\,\mathrm{d}t\notag\\
    &\qquad-\frac{1}{\tau}\int_{T-2\tau}^{T-\tau}\int_\Omega\rho(\varphi)\frac{|\mathbf{v}|^2}{2}\,\mathrm{d}x\,\mathrm{d}t
    -\frac{1}{\tau}\int_{T-2\tau}^{T-\tau}\int_\Omega \Big(\frac{1}{2}|\nabla\varphi|^2+\Psi(\varphi)\Big)\,\mathrm{d}x\,\mathrm{d}t\notag\\
    &\quad=\frac{1}{\tau}\int_{\tau}^{2\tau}\mathcal{E}(t)\,\mathrm{d}t-\frac{1}{\tau}\int_{T-2\tau}^{T-\tau}\int_\Omega\rho(\varphi)\frac{|\mathbf{v}|^2}{2}\,\mathrm{d}x\,\mathrm{d}t-\frac{1}{\tau}\int_{T-2\tau}^{T-\tau}\int_\Omega \Big(\frac{1}{2}|\nabla\varphi|^2+\Psi(\varphi)\Big)\,\mathrm{d}x\,\mathrm{d}t\notag\\
    &\quad:=\sum_{i=1}^3\mathcal{J}_i(\tau). \label{tau-limit-2}
    % &\quad\to\int_\Omega\frac{1}{2}\rho(\varphi_0)|\mathbf{v}_0|^2\,\mathrm{d}x+ \int_\Omega \Big(\frac{1}{2}|\nabla\varphi_0|^2+\Psi(\varphi_0)\Big)\,\mathrm{d}x\notag\\
    % &\qquad-\int_\Omega\frac{1}{2}\rho(\varphi(T))|\mathbf{v}(T)|^2\,\mathrm{d}x-\int_\Omega \Big(\frac{1}{2}|\nabla\varphi(T)|^2+\Psi(\varphi(T))\Big)\,\mathrm{d}x,
    % \quad\text{as} \ \tau\to0.\label{tau-limit-2}
\end{align}
Due to the energy inequality \eqref{energy-inequality}, it holds $\mathcal{E}(t)\leq \mathcal{E}(0)$ for any $t\geq 0$. On the other hand, from the regularity of the weak solution $(\mathbf{v},\varphi)$ (recall Proposition \ref{weak-solution}-$(1)$), we find
$\mathcal{E}(0)\leq \liminf_{t\to0^+}\mathcal{E}(t)$. As a consequence,  $\lim_{t\to0^+}\mathcal{E}(t)= \mathcal{E}(0)$, that is, $\mathcal{E}(t)$ is right-continuous at $t=0$. This implies that $$\lim_{\tau\to0}\mathcal{J}_1(\tau)=\mathcal{E}(0).$$
For all $s\in(0,T)$, by Proposition \ref{weak-solution}-$(5)$ and the Sobolev embedding theorem, it holds
\[\mu\nabla\varphi\in L^2(s,T;\mathbf{L}^2(\Omega)),\quad  \nabla\mu\in L^2(s,T;\mathbf{H}^2(\Omega))\hookrightarrow L^2(s,T;\mathbf{L}^\infty(\Omega)),\]
which, together with $\mathbf{v}\in L^\infty(0,T;\mathbf{L}_\sigma^2(\Omega))$, implies that
\[\mathbf{v}\otimes \mathbf{J}=-\rho'(\varphi)\mathbf{v}\otimes\nabla\mu\in L^2(s,T;\mathbf{L}^2(\Omega)).\]
Moreover, from $\mathbf{v}\in L^4(0,T;\mathbf{L}^4(\Omega))$, we infer that $\rho(\varphi)\mathbf{v}\otimes\mathbf{v}\in L^{2}(0,T;\mathbf{L}^2(\Omega))$. Using the fact $\mathbb{D}\mathbf{v}\in L^2(0,T;\mathbf{L}^2(\Omega))$ and the regularity properties obtained above, we see that
\begin{align}
 \partial_t(\mathbb{P}_\sigma(\rho(\varphi)\mathbf{v}))\in L^2(s,T;\mathbf{H}^{-1}(\Omega)), \notag
\end{align}
which, together with the fact $\mathbb{P}_\sigma(\rho(\varphi)\mathbf{v})\in L^2(0,T;\mathbf{H}_\sigma^1(\Omega))$ and the Lions--Magenes theorem, implies that $\mathbb{P}_\sigma(\rho(\varphi)\mathbf{v})\in C([s/2,T];\mathbf{L}_\sigma^2(\Omega))$.
Furthermore, by Proposition \ref{weak-solution}-$(5)$, the Aubin--Lions--Simon lemma implies that $\varphi\in C([s,T];W^{1,6}(\Omega))\hookrightarrow C([s,T];C(\overline{\Omega}))$. Then, we can pass to the limit as $\tau\to0$ in $\mathcal{J}_2$,  $\mathcal{J}_3$, and obtain
$$
\lim_{\tau\to0}\mathcal{J}_2(\tau)=-\int_\Omega \rho(\varphi(T))\frac{|\mathbf{v}(T)|^2}{2}\,\mathrm{d}x,\quad
\lim_{\tau\to0}\mathcal{J}_3(\tau)=-\int_\Omega\Big(\frac{1}{2}|\nabla\varphi(T)|^2+\Psi(\varphi(T))\Big)\,\mathrm{d}x.
$$
Collecting \eqref{delta-limit-form}, \eqref{tau-limit-1}, \eqref{tau-limit-2} and the above limits for $\mathcal{J}_i$, $i=1,2,3$, we can conclude that
\begin{align}
		\mathcal{E}(T)
        +2\int_0^T\int_\Omega \nu(\varphi(t))|\mathbb{D}\mathbf{v}(t)|^2\,\mathrm{d}x\,\mathrm{d}t
        +\int_0^T\int_\Omega |\nabla\mu(t)|^2\,\mathrm{d}x\,\mathrm{d}t
        = \mathcal{E}(0). \notag
        %\label{0207-en-1}
\end{align}
Since $T>0$ is arbitrary, we see that $\mathcal{E}$ is continuous on $[0,\infty)$ and the energy equality \eqref{energy} holds.

The proof of Theorem \ref{criterion} is complete.
\hfill$\square$

%%%%%%%%%%%%%%%%%%%%%%%%%%%%%%%%
\section{Lyapunov Stability}
\label{section 4}

In this section, we prove Theorem \ref{global-strong} on the existence of a unique global strong solution and Lyapunov stability of the AGG model \eqref{AGG}--\eqref{BC-IC}.
To begin with, we state a result on the existence and uniqueness of local strong solutions.
\begin{pro}
	\label{local-existence}
	Let $\Omega\subset\mathbb{R}^3$ be a smooth bounded domain
    and let the assumption $\mathbf{(H)}$ hold.
    For any initial datum $(\mathbf{v}_0,\varphi _0)\in \mathbf{H}_\sigma^1(\Omega)\times H^2(\Omega)$ satisfying \eqref{initial-mu0}, there exists a time $T_M>0$, depending only on $M$, $\mathcal{E}(0)$, and coefficients of the system,
     such that problem \eqref{AGG}--\eqref{BC-IC} admits (at least) a strong solution $(\mathbf{v},p,\varphi,\mu)$ on $[0,T_M]$:
	\begin{align*}
		&\mathbf{v}\in C([0,T_M];\mathbf{H}^1_\sigma(\Omega))\cap L^2(0,T_M;\mathbf{H}^2_\sigma(\Omega))
        \cap H^1(0,T_M;\mathbf{L}^2_\sigma(\Omega)),\\
		&\varphi\in L^\infty(0,T_M;W^{2,6}(\Omega)),
        \quad\partial_t\varphi\in L^\infty(0,T_M;(H^{1}(\Omega))')\cap L^2(0,T_M;H^1(\Omega)),\\
        &\varphi\in L^\infty(\Omega\times(0,T_M))\ \
        \text{such that}\ \ |\varphi(x,t)|<1\ \text{a.e. in }\Omega\times(0,T_M), \\
		&\mu\in L^\infty(0,T_M;H^1(\Omega))\cap L^2(0,T_M;H^3(\Omega)),
        \quad p\in L^2(0,T_M;H^1(\Omega)).
	\end{align*}
    In addition, if $\|\varphi _0\|_{L^\infty}\leq 1-\delta_{0}$, for some $\delta_0\in (0,1)$,
    then there exists a time $\widetilde{T}_M\in(0,T_M]$, depending further on $\delta_0$, but independent of the specific initial datum, such that the solution is unique on the time interval $[0,\widetilde{T}_M]$, satisfying
    $\varphi\in L^\infty(0,\widetilde{T}_M;H^3(\Omega))$ and the following strict separation property:
    \begin{align}
        \|\varphi(t)\|_{C(\overline{\Omega})}\leq 1-\frac{1}{2}\delta_{0},\quad\forall\, t\in[0,\widetilde{T}_M].\label{local-separation}
    \end{align}
\end{pro}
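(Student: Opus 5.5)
The natural route is a Galerkin/fixed-point construction for the full system, with the Cahn--Hilliard part handled by the recent theory of \cite{CGGS} for non-constant $a$ and non-degenerate $b$. I would write the momentum equation in non-conservative form using $\partial_t\rho(\varphi)+\mathrm{div}(\rho(\varphi)\mathbf{v}+\mathbf{J})=0$:
\[
\rho(\varphi)\partial_t\mathbf{v}+((\rho(\varphi)\mathbf{v}+\mathbf{J})\cdot\nabla)\mathbf{v}-\mathrm{div}(2\nu(\varphi)\mathbb{D}\mathbf{v})+\nabla p=-\mathrm{div}(a(\varphi)\nabla\varphi\otimes\nabla\varphi).
\]
I would then set up a Schauder (or Galerkin in $\mathbf{v}$) scheme. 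Given $\widehat{\mathbf{v}}$ in a ball of $L^\infty(0,T;\mathbf{H}^1_\sigma)\cap L^2(0,T;\mathbf{H}^2_\sigma)$, I would solve the convective Cahn--Hilliard equation with drift $\widehat{\mathbf{v}}$ using \cite{CGGS}. Testing by $\partial_t\mu$, in the form of the difference-quotient estimate $\frac{d}{dt}\|\nabla\mu\|_{L^2}^2$, yields $\mu\in L^\infty(H^1)\cap L^2(H^3)$ and $\partial_t\varphi\in L^2(H^1)$. The initial bound for this estimate is exactly the quantity $M$ in \eqref{initial-mu0}.

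From $\mu\in L^\infty(H^1)$ I would derive the $W^{2,6}$ bound on $\varphi$ and $F'(\varphi)\in L^\infty(L^6)$ via the elliptic estimates for the quasilinear operator $-\mathrm{div}(a(\varphi)\nabla\cdot)+\frac{a'}{2}|\nabla\cdot|^2+\Psi'$ from \cite{CGGS}. Next I would solve the linear momentum equation with the coefficients $\rho(\varphi),\nu(\varphi)$ frozen and the transport field $\rho(\varphi)\widehat{\mathbf{v}}+\mathbf{J}$. Testing with $\mathbf{A}\mathbf{v}$ and $\partial_t\mathbf{v}$ and using Stokes regularity with variable viscosity, I would absorb the terms $\nabla\nu(\varphi)\mathbb{D}\mathbf{v}$ with $\nabla\varphi\in L^\infty$ (from $W^{2,6}$) and $\mathbf{J}\cdot\nabla\mathbf{v}$ with $\nabla\mu\in L^2(H^2)\subset L^2(L^\infty)$. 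The capillary force $\mathrm{div}(a\nabla\varphi\otimes\nabla\varphi)$ lies in $L^\infty(L^6)$. A Gronwall argument then gives bounds on a short interval $T_M$ depending only on $M$, $\mathcal{E}(0)$ and the coefficients. Compactness via Aubin--Lions yields existence, and the pressure is recovered from the de Rham lemma with $p\in L^2(H^1)$.

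For the second part I would assume $\|\varphi_0\|_{L^\infty}\le1-\delta_0$. Since $\varphi\in C([0,T_M];W^{1,6})\hookrightarrow C(\overline\Omega)$ with a modulus controlled by the $L^2(H^1)$ bound on $\partial_t\varphi$ and the $L^\infty(W^{2,6})$ bound, I would choose $\widetilde T_M$ small enough that $\|\varphi(t)-\varphi_0\|_{C(\overline\Omega)}\le\delta_0/2$. Interpolating between $\|\varphi(t)-\varphi_0\|_{(H^1)'}\le Ct^{1/2}$ and the uniform $W^{2,6}$ bound makes this quantitative, which gives \eqref{local-separation}. On the separated region $\Psi$ is smooth, so differentiating the $\mu$-equation gives $\varphi\in L^\infty(H^3)$.

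Uniqueness would follow from a difference estimate for two solutions. For $\mathbf{v}_1-\mathbf{v}_2$ I would estimate in $\mathbf{L}^2$; for $\varphi_1-\varphi_2$, which has zero mean, in $(H^1)'$ or $H^1$. The singular terms are Lipschitz because both solutions are separated, and a Gronwall argument closes the estimate.

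The main obstacle is the coupling of the momentum estimate with the non-constant $a,b$. The terms $\mathbf{J}\cdot\nabla\mathbf{v}$ and the high-order capillary term require the $H^3$-type control of $\mu$ that \cite{CGGS} provides only at the level of $\mu\in L^2(H^3)$. That control must be made uniform with respect to the drift, which is why I would first close the Cahn--Hilliard estimate with the drift regarded as data.
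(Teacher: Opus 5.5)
Your outline is viable, but it is organized differently from the paper. The paper never freezes the drift. It derives a priori estimates for the fully coupled (semi-Galerkin) system, testing the momentum equation (with modified pressure and forcing $\mu\nabla\varphi$) by $\partial_t\mathbf{v}$ and the Cahn--Hilliard equation by $\partial_t\mu$. The drift term $\int_\Omega(\mathbf{v}\cdot\nabla\varphi)\partial_t\mu$ is handled by putting the cross term $\int_\Omega\mu\,\mathbf{v}\cdot\nabla\varphi$ into the functional $\mathcal{H}=\int_\Omega\nu(\varphi)|\mathbb{D}\mathbf{v}|^2+\tfrac12\int_\Omega b(\varphi)|\nabla\mu|^2+\int_\Omega\mu\,\mathbf{v}\cdot\nabla\varphi$. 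Differentiating this produces $\int_\Omega\mu\nabla\varphi\cdot\partial_t\mathbf{v}$, which is absorbed by the momentum dissipation $\int_\Omega\rho(\varphi)|\partial_t\mathbf{v}|^2$. Combined with elliptic bounds for $\|\nabla\mu\|_{H^2}$ and $\|\mathbf{A}\mathbf{v}\|_{L^2}$, this gives the single inequality $\frac{\mathrm{d}}{\mathrm{d}t}\mathcal{H}\le C(1+\mathcal{H})^5$. The time $T_M$ is read off from it, and the same inequality is reused for $\widetilde{\mathcal{H}}$ in the stability proof. Separation, $H^3$ regularity and uniqueness are then quoted from \cite{CGGS,GOW}. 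Your decoupled scheme is more modular. Your quantitative separation argument, which interpolates the smallness of $\|\varphi(t)-\varphi_0\|_{(H^1)'}$ against the uniform $W^{2,6}$ bound, together with the elliptic derivation of $\varphi\in L^\infty(0,\widetilde T_M;H^3(\Omega))$, is a self-contained replacement for that citation.

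Decoupling, however, leaves points you must address.

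First, $\widehat{\mathbf{v}}$ lies only in $L^\infty(\mathbf{H}^1_\sigma)\cap L^2(\mathbf{H}^2_\sigma)$, so the cross-term trick, which needs $\partial_t\widehat{\mathbf{v}}$, is unavailable. You can add $\partial_t\widehat{\mathbf{v}}\in L^2(\mathbf{L}^2)$ to the ball. Alternatively, expand $\langle\partial_t\mu,\widehat{\mathbf{v}}\cdot\nabla\varphi\rangle$ and rewrite the singular piece as $\int_\Omega\Psi''(\varphi)\partial_t\varphi\,\widehat{\mathbf{v}}\cdot\nabla\varphi=-\int_\Omega\Psi'(\varphi)\,\widehat{\mathbf{v}}\cdot\nabla\partial_t\varphi$, since $\Psi''(\varphi)$ is unbounded before separation.

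Second, and more seriously, Schauder requires the map $\widehat{\mathbf{v}}\mapsto\varphi$ to be single-valued and continuous. That amounts to uniqueness for the convective Cahn--Hilliard equation with non-constant $a,b$ and singular $\Psi$ without separation, which is not available; the statement itself asserts uniqueness only under separation. You need a regularized potential or a full Galerkin approximation in both unknowns, with bounds uniform in the approximation.

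Finally, in your uniqueness step the $(H^1)'$ option fails. The variable mobility and the term $(\rho(\varphi_1)-\rho(\varphi_2))\partial_t\mathbf{v}_2$ tested against $\mathbf{v}_1-\mathbf{v}_2$ require at least $L^3$, hence $H^1$, control of $\varphi_1-\varphi_2$. A competing strong solution must also first be shown to stay separated, for instance by time continuity into $C(\overline\Omega)$ plus a continuation argument.
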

\begin{proof}
    The proof of Proposition \ref{local-existence} can be carried out rigorously through a semi-Galerkin approximation scheme (cf. \cite{Gior22,GOW}), based on the recent contribution  \cite{CGGS} on the Cahn--Hilliard equation with a non-constant gradient energy coefficient
    and a non-degenerate mobility. Below we only derive necessary \emph{a priori} estimates.
    According to \cite[(3.1')]{AGG2011}, equation \eqref{AGG}$_1$ can be reformulated as
\begin{align}
    \partial_t(\rho(\varphi)\mathbf{v})
    +\text{div}(\mathbf{v}\otimes(\rho(\varphi)\mathbf{v}+\mathbf{J}))
    -\text{div}(2\nu(\varphi)\mathbb{D}\mathbf{v})+\nabla g
    =\mu\nabla\varphi,\label{reformulate}
\end{align}
with a modified pressure $g=p+\Psi(\varphi)+a(\varphi)\frac{|\nabla\varphi|^2}{2}$.
Testing \eqref{reformulate} by $\mathbf{v}$ and \eqref{AGG}$_3$ by $\mu$,
then adding the two resultants, an integration in time over $[0,t]$ with $0\leq t\leq T$ yields that
\begin{align*}
    \mathcal{E}(t)
    +\int_0^t\int_\Omega 2\nu(\varphi(s))|\mathbb{D}\mathbf{v}(s)|^2\,\mathrm{d}x\,\mathrm{d}s
    +\int_0^t\int_\Omega b(\varphi(s))|\nabla\mu(s)|^2\,\mathrm{d}x\,\mathrm{d}s
    =\mathcal{E}(0).
\end{align*}
Therefore, we easily obtain
\begin{align}
    \|\mathbf{v}\|_{L^\infty(0,T;\mathbf{L}_\sigma^2(\Omega))}
    +\|\mathbf{v}\|_{L^2(0,T;\mathbf{H}_\sigma^1(\Omega))}
    +\|\varphi\|_{L^\infty(0,T;{H}^1(\Omega))}
    +\|\nabla\mu\|_{L^2(0,T;\mathbf{L}^2(\Omega))}\leq C.\label{0206-uni1}
\end{align}
Define $A(\varphi):=\int_0^\varphi\sqrt{a(s)}\,\mathrm{d}s$ as in \cite{ADG2013}. We observe that
\begin{align*}
    |A(\varphi(x,t))|\leq \sqrt{a^\ast} \ \ \text{for all }(x,t)\in \Omega\times(0,T)
    \quad\text{and}\quad \nabla A(\varphi)=\sqrt{a(\varphi)}\nabla\varphi,
\end{align*}
which together with \eqref{0206-uni1} imply  $\|A(\varphi)\|_{L^\infty(0,T;H^1(\Omega))}\leq C$.
Moreover, \eqref{AGG}$_3$ and \eqref{0206-uni1} yield that $$\|\partial_t\varphi\|_{L^2(0,T;(H^{1}(\Omega))')}\leq C.$$
Following the same argument as  \cite[(5.13)--(5.27)]{CGGS}, we obtain
\begin{align*}
    &\|\varphi\|_{L^4(0,T;H^2(\Omega))} +\|A(\varphi)\|_{L^4(0,T;H^2(\Omega))}+\|\varphi\|_{L^2(0,T;W^{2,6}(\Omega))}\leq C,\\
    &\|\mu\|_{L^2(0,T;H^1(\Omega))} +\|\Psi_0'(\varphi)\|_{L^2(0,T;L^6(\Omega))}\leq C.
\end{align*}
Next, testing \eqref{reformulate} by $\partial_t\mathbf{v}$, we obtain
\begin{align}
   & \frac{\mathrm{d}}{\mathrm{d}t}\int_\Omega \nu(\varphi)|\mathbb{D}\mathbf{v}|^2\,\mathrm{d}x
   +\int_\Omega \rho(\varphi)|\partial_t\mathbf{v}|^2\,\mathrm{d}x\notag\\
   &\quad=\int_\Omega \nu'(\varphi)\partial_t\varphi|\mathbb{D}\mathbf{v}|^2\,\mathrm{d}x
   -\int_\Omega \rho(\varphi)[(\mathbf{v}\cdot\nabla)\mathbf{v}]\cdot\partial_t\mathbf{v}\,\mathrm{d}x\notag\\
    &\qquad+\int_\Omega \mu\nabla\varphi\cdot\partial_t\mathbf{v}\,\mathrm{d}x
    +\int_\Omega \rho'(\varphi) b(\varphi) [(\nabla\mu \cdot\nabla)\mathbf{v}]\cdot\partial_t\mathbf{v}\,\mathrm{d}x.\label{1210-1}
\end{align}
Testing \eqref{AGG}$_3$ by $\partial_t\mu$, we get (cf. \cite[(9.11)]{CGGS})
\begin{align}
    &\frac{\mathrm{d}}{\mathrm{d}t}\Big(\frac{1}{2}\int_\Omega b(\varphi)|\nabla\mu|^2\,\mathrm{d}x
    +\int_\Omega\mu \nabla\varphi\cdot\mathbf{v}\,\mathrm{d}x\Big)
    +\int_\Omega a(\varphi)|\nabla\partial_t\varphi|^2\,\mathrm{d}x
    +\int_\Omega \frac{(a'(\varphi))^2}{4a(\varphi)}|\nabla\varphi|^2|\partial_t\varphi|^2\,\mathrm{d}x\notag\\
    &\quad=\int_\Omega \mu\nabla\varphi\cdot\partial_t\mathbf{v}\,\mathrm{d}x
    +\int_\Omega \mu\nabla\partial_t\varphi\cdot\mathbf{v}\,\mathrm{d}x
    -\int_\Omega \Psi''(\varphi)|\partial_t\varphi|^2\,\mathrm{d}x+\int_\Omega\frac{a'(\varphi)}{2\sqrt{a(\varphi)}}\Delta A(\varphi)|\partial_t\varphi|^2\,\mathrm{d}x\notag\\
    &\qquad-\int_\Omega a'(\varphi)\nabla\varphi\cdot\nabla\partial_t\varphi\, \partial_t\varphi\,\mathrm{d}x+\frac{1}{2}\int_\Omega b'(\varphi)\partial_t\varphi|\nabla\mu|^2\,\mathrm{d}x.\label{1210-2}
\end{align}
Define
	\begin{align}
		\mathcal{H}(t):=\int_\Omega \nu(\varphi(t))|\mathbb{D}\mathbf{v}(t)|^2\,\mathrm{d}x
		+\frac{1}{2}\int_\Omega b(\varphi(t))|\nabla\mu(t)|^2\,\mathrm{d}x
		+\int_\Omega\mu(t)\mathbf{v}(t)\cdot\nabla\varphi(t)\,\mathrm{d}x.\notag
	\end{align}
We deduce from \eqref{1210-1} and \eqref{1210-2} that
\begin{align}
&\frac{\mathrm{d}}{\mathrm{d}t}\mathcal{H}+\int_\Omega \rho(\varphi)|\partial_t\mathbf{v}|^2\,\mathrm{d}x
+\int_\Omega a(\varphi)|\nabla\partial_t\varphi|^2\,\mathrm{d}x\notag\\
&\quad\leq\int_\Omega \nu'(\varphi)\partial_t\varphi|\mathbb{D}\mathbf{v}|^2\,\mathrm{d}x
-\int_\Omega \rho(\varphi)[(\mathbf{v}\cdot\nabla)\mathbf{v}]\cdot\partial_t\mathbf{v}\,\mathrm{d}x
+2\int_\Omega \mu\nabla\varphi\cdot\partial_t\mathbf{v}\,\mathrm{d}x\notag\\
&\qquad+\int_\Omega \rho'(\varphi) b(\varphi) [(\nabla\mu \cdot\nabla)\mathbf{v}]\cdot\partial_t\mathbf{v}\,\mathrm{d}x
+\int_\Omega \mu\nabla\partial_t\varphi\cdot\mathbf{v}\,\mathrm{d}x
+\Theta_0\int_\Omega |\partial_t\varphi|^2\,\mathrm{d}x\notag\\
&\qquad+\frac{1}{2}\int_\Omega b'(\varphi)\partial_t\varphi|\nabla\mu|^2\,\mathrm{d}x
+\int_\Omega\frac{a'(\varphi)}{2\sqrt{a(\varphi)}}\Delta A(\varphi)|\partial_t\varphi|^2\,\mathrm{d}x-\int_\Omega a'(\varphi)\nabla\varphi\cdot\nabla\partial_t\varphi\, \partial_t\varphi\,\mathrm{d}x\notag\\
&\quad:=\sum_{j=1}^{9}\mathcal{K}_j.\label{0206-H}
\end{align}
%where
%\begin{align}
%	\mathcal{H}(t):=\int_\Omega \nu(\varphi(t))|\mathbb{D}\mathbf{v}(t)|^2\,\mathrm{d}x
%    +\frac{1}{2}\int_\Omega b(\varphi(t))|\nabla\mu(t)|^2\,\mathrm{d}x
%    +\int_\Omega\mu(t)\mathbf{v}(t)\cdot\nabla\varphi(t)\,\mathrm{d}x.\notag
%\end{align}
For the terms $\mathcal{K}_j$ with $1\leq j\leq6$, we use the estimates in \cite{Gior22} and conclude that
\begin{align}
    \sum_{j=1}^6|\mathcal{K}_j|
    \leq \tilde{\epsilon}\Big(\|\mathbf{A}\mathbf{v}\|_{L^2}^2+\|\partial_t\mathbf{v}\|_{L^2}^2+\|\nabla\partial_t\varphi\|_{L^2}^2+\|\nabla\mu\|_{H^2}^2\Big)
    +C_{ \tilde{\epsilon}}(1+\mathcal{H})^5,\label{1-6}
\end{align}
for a small constant $ \tilde{\epsilon}\in (0,1)$.
Concerning $\mathcal{K}_7$, $\mathcal{K}_8$ and $\mathcal{K}_9$, the estimates for \cite[(9.11)]{CGGS} indicate that
\begin{align}
    |\mathcal{K}_7|+|\mathcal{K}_8|+|\mathcal{K}_9|\leq \tilde{\epsilon}\|\nabla\partial_t\varphi\|_{L^2}^2+C_{\tilde{\epsilon}}(1+\mathcal{H})^3.\label{78}
\end{align}
% Since $\|\varphi\|_{H^2}^2+\|A(\varphi)\|_{H^2}^2\leq C(\|\nabla\mu\|_{L^2}+1)$ and $\|\partial_t\varphi\|_{(H^1(\Omega))'}\leq C(\|\mathbf{v}\|_{L^2}+\|\nabla\mu\|_{L^2})$, by \eqref{b12}, it holds
From \eqref{0206-H}, \eqref{1-6} and \eqref{78}, it follows  that
\begin{align}
    &\frac{\mathrm{d}}{\mathrm{d}t}\mathcal{H}+\int_\Omega \rho(\varphi)|\partial_t\mathbf{v}|^2\,\mathrm{d}x
    +a_\ast\int_\Omega|\nabla\partial_t \varphi|^2\,\mathrm{d}x\notag\\
    &\quad\leq \tilde{\epsilon}\Big(\|\mathbf{A}\mathbf{v}\|_{L^2}^2+\|\partial_t\mathbf{v}\|_{L^2}^2+\|\nabla\partial_t\varphi\|_{L^2}^2+\|\nabla\mu\|_{H^2}^2\Big)
    +C_{\tilde{\epsilon}}(1+\mathcal{H})^5.\label{1210-3}
\end{align}
We now control the higher-order terms $\|\nabla\mu\|_{H^2}$ and $\|\mathbf{A}\mathbf{v}\|_{L^2}$ on the right-hand side of \eqref{1210-3}. First, we take the gradient in \eqref{AGG}$_3$ and get
\begin{align*}
    b(\varphi)\nabla\Delta\mu
    &=\nabla\partial_t\varphi +\nabla\mathbf{v}\nabla\varphi+\nabla^2\varphi\mathbf{v}-b'(\varphi)\Delta\mu\nabla\varphi\\
    &\quad-b'(\varphi)\nabla^2\mu\nabla\varphi-b'(\varphi)\nabla^2\varphi\nabla\mu-b''(\varphi)(\nabla\varphi\cdot\nabla\mu)\nabla\varphi.
\end{align*}
Then, a similar argument like in \cite{GOW} yields that
%%%%%%
%%%%%%\|\nabla\Delta\mu\|_{L^2}估计细节
% it holds
% \begin{align*}
%     \|b(\varphi)\nabla\Delta\mu\|_{L^2}&\leq C\|\Delta\mu\|_{L^3}\|\nabla\varphi\|_{L^6}+C\|\nabla^2\mu\|_{L^3}\|\nabla\varphi\|_{L^6}+C\|\nabla^2\varphi\|_{L^2}\|\nabla\mu\|_{L^\infty}\\
%     &\quad+C\|\nabla\varphi\|_{L^6}^2\|\nabla\mu\|_{L^6}+C\|\nabla\mathbf{v}\|_{L^3}\|\nabla\varphi\|_{L^6}+\|\nabla^2\varphi\|_{L^2}\|\mathbf{v}\|_{L^\infty}+C\|\nabla\partial_t\varphi\|_{L^2}\\
%     &\leq C\|\Delta\varphi\|_{L^2}\|\nabla\mu\|_{L^2}^\frac{1}{4}\|\nabla\mu\|_{H^2}^\frac{3}{4}+C\|\Delta\varphi\|_{L^2}^2\|\nabla\mu\|_{L^2}^\frac{1}{2}\|\nabla\mu\|_{H^2}^\frac{1}{2}\\
%     &\qquad+C\|\Delta\varphi\|_{L^2}\|\mathbf{v}\|_{L^2}^\frac{1}{2}\|\mathbf{v}\|_{H^2}^\frac{1}{2}+C\|\nabla\partial_t\varphi\|_{L^2}\\
%     &\leq \epsilon(\|\nabla\mu\|_{H^2}^2+\|\mathbf{v}\|_{H^2}^2)+C_\epsilon\|\Delta\varphi\|_{L^2}^4\|\nabla\mu\|_{L^2}+C_\epsilon\|\Delta\varphi\|_{L^2}^2\|\mathbb{D}\mathbf{v}\|_{L^2}+C\|\nabla\partial_t\varphi\|_{L^2},
    % \end{align*}
   % which implies that
    \begin{align}
        \|\nabla\mu\|_{H^2}^2\leq \tilde{\epsilon}(\|\nabla\mu\|_{H^2}^2
        +\|\mathbf{A}\mathbf{v}\|_{L^2}^2)+C_{\tilde{\epsilon}}(1+\mathcal{H})^5
        +C_{\tilde{\epsilon}}\|\nabla\partial_t\varphi\|_{L^2}^2.\label{1210-4}
    \end{align}
    For the term $\|\mathbf{A\mathbf{v}}\|_{L^2}$,
    according to \cite[(3.57)]{Gior22} and the corresponding estimates for $R_i$ therein with $i=7,...,12$, we can obtain
\begin{align}
    \|\mathbf{Av}\|_{L^2}^2\leq \tilde{\epsilon}(\|\mathbf{Av}\|_{L^2}^2+\|\nabla\mu\|_{H^2}^2)
    +C_{\tilde{\epsilon}}\|\partial_t\mathbf{v}\|_{L^2}^2 +C_{\tilde{\epsilon}}(1+\mathcal{H})^5.\label{1210-5}
\end{align}
Collecting \eqref{1210-3}, \eqref{1210-4} and \eqref{1210-5}, choosing sufficiently small $\tilde{\epsilon}>0$, we arrive at
\begin{align}
    \frac{\mathrm{d}}{\mathrm{d}t}\mathcal{H}+\frac{\rho_\ast}{2}\|\partial_t\mathbf{v}\|_{L^2}^2+\frac{a_\ast}{2}\|\nabla\partial_t\varphi\|_{L^2}^2
    +\widetilde{C}(\|\mathbf{A}\mathbf{v}\|_{L^2}^2+\|\mu\|_{H^3}^2)
    \leq C(1+\mathcal{H})^5.\label{0206-1}
\end{align}
By \eqref{initial-mu0}, \eqref{0206-1} and Korn's inequality \eqref{Korn}, we can conclude that there exists a positive time $T_M$, depending on $M$, $\mathcal{E}(0)$ and coefficients of the system, such that
\begin{align}
    \|\nabla\mathbf{v}\|_{L^\infty(0,T_M; \mathbf{L}_\sigma^2(\Omega))}
    +\|\nabla\mu\|_{L^\infty(0,T_M;\mathbf{L}^2(\Omega))}\leq C,
    \label{0206-i2}
\end{align}
which, together with \eqref{AGG}$_3$ yields  $\|\partial_t\varphi\|_{L^\infty(0,T_M;(H^{1}(\Omega))')}\leq C$.
According to the estimates \cite[(5.17), (5.23) and (5.26)]{CGGS}, from \eqref{0206-i2}, we infer that
\begin{align}
    \|\mu\|_{L^\infty(0,T_M;H^1(\Omega))} +\|\varphi\|_{L^\infty(0,T_M;W^{2,6}(\Omega))}+\|\Psi_0'(\varphi)\|_{L^\infty(0,T_M;L^6(\Omega))}\leq C.\notag
\end{align}
Integrating \eqref{0206-1} on $[0,T_M]$, we further get
\begin{align*}
    \|\partial_t\mathbf{v}\|_{L^2(0,T_M; \mathbf{L}^2(\Omega))} +\|\nabla\partial_t\varphi\|_{L^2(0,T_M; \mathbf{L}^2(\Omega))}
    +\|\mathbf{Av}\|_{L^2(0,T_M; \mathbf{L}^2(\Omega))} +\|\mu\|_{L^2(0,T_M;H^3(\Omega))}\leq C.
\end{align*}
When the initial datum $\varphi_0$ satisfies $\|\varphi_0\|_{L^\infty}\leq 1-\delta_0$,
following the same argument as in \cite[Section 9]{CGGS},
we can conclude that there exists a smaller time $\widetilde{T}_M\in(0,T_M]$ such that $\varphi\in L^\infty(0,\widetilde{T}_M;H^3(\Omega))$ and \eqref{local-separation} holds. For the uniqueness of strong solutions on $[0,\widetilde{T}_M]$, we refer to \cite{GOW,CGGS} and omit the details here. This completes the proof of Proposition \ref{local-existence}.
\end{proof}

The following energy identity is crucial for establishing the existence of global strong solutions. The proof follows from direct calculations, which are guaranteed by the regularity of the strong solution.
\begin{pro}
	%\label{energy-id}
	Let the assumptions in Proposition \ref{local-existence} be satisfied.
    Then, the local strong solution $(\mathbf{v},p,\varphi,\mu)$ to problem \eqref{AGG}--\eqref{BC-IC} satisfies the following energy identity:
	\begin{align}
		\frac{\mathrm{d}}{\mathrm{d}t}\mathcal{E}(t)
        +2\Big\|\sqrt{\nu(\varphi(t))}\mathbb{D}\mathbf{v}(t)\Big\|_{L^2}^2
        +\Big\|\sqrt{b(\varphi(t))} \nabla\mu(t)\Big\|_{L^2}^2
        =0,\label{en-id}
	\end{align}
	for almost all $t\in(0,T_M)$.
\end{pro}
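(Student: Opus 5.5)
The plan is to test the momentum equation with $\mathbf{v}$ and the Cahn--Hilliard equation with $\mu$. The regularity of the strong solution from Proposition \ref{local-existence} makes both computations rigorous, with no mollification needed. We work with the reformulated momentum equation \eqref{reformulate}, whose modified pressure is $g=p+\Psi(\varphi)+a(\varphi)|\nabla\varphi|^2/2$. Since $\mathbf{v}\in L^2(0,T_M;\mathbf{H}^2_\sigma)\cap H^1(0,T_M;\mathbf{L}^2_\sigma)$ and $\varphi\in L^\infty(0,T_M;W^{2,6})$ with $\partial_t\varphi\in L^2(0,T_M;H^1)$, every term of \eqref{reformulate} lies in $L^1(0,T_M;\mathbf{L}^2)$ or better. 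We can therefore take the $L^2$ inner product with $\mathbf{v}(t)$ for a.e.\ $t$.

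First the momentum part. Write $\partial_t(\rho\mathbf{v})\cdot\mathbf{v}=\tfrac12\partial_t(\rho|\mathbf{v}|^2)+\tfrac12\rho'(\varphi)\partial_t\varphi|\mathbf{v}|^2$. The transport term gives, after integrating by parts using $\mathbf{v}|_{\partial\Omega}=0$,
\[
\int_\Omega \mathrm{div}(\mathbf{v}\otimes(\rho\mathbf{v}+\mathbf{J}))\cdot\mathbf{v}\,\mathrm{d}x=\tfrac12\int_\Omega \mathrm{div}(\rho\mathbf{v}+\mathbf{J})|\mathbf{v}|^2\,\mathrm{d}x .
\]
Because $\mathrm{div}\,\mathbf{v}=0$ and $\rho$ is affine in $\varphi$, we have $\mathrm{div}(\rho\mathbf{v}+\mathbf{J})=\rho'(\varphi)(\mathbf{v}\cdot\nabla\varphi-\mathrm{div}(b\nabla\mu))=-\rho'(\varphi)\partial_t\varphi$ by \eqref{AGG}$_3$. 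This cancels the extra term from the time derivative, and $\tfrac{\mathrm d}{\mathrm dt}\int\rho|\mathbf{v}|^2/2$ remains. The viscous term produces $2\int\nu|\mathbb{D}\mathbf{v}|^2$, the pressure term vanishes by incompressibility, and the right-hand side gives $\int\mu\nabla\varphi\cdot\mathbf{v}$. Each integration by parts must be justified: the boundary integrals involve $|\mathbf{v}|^2$, which vanishes on $\partial\Omega$. The product rule in time uses $\rho(\varphi)\in H^1(0,T_M;L^3)\cap L^\infty$ together with $\mathbf{v}\in H^1(0,T_M;\mathbf{L}^2)\cap L^2(0,T_M;\mathbf{L}^\infty)$. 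This gives $\rho|\mathbf{v}|^2\in W^{1,1}(0,T_M;L^1)$ with the expected derivative, as one checks by a time-mollification argument.

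Next the phase-field part. Test \eqref{AGG}$_3$ with $\mu\in L^2(0,T_M;H^3)$. This yields $\int\partial_t\varphi\,\mu+\int(\mathbf{v}\cdot\nabla\varphi)\mu+\int b|\nabla\mu|^2=0$, where the boundary term drops out by $\partial_\mathbf{n}\mu=0$. The convective term cancels the capillary contribution from the momentum balance. The main point is the identity
\[
\int_\Omega\partial_t\varphi\,\mu\,\mathrm{d}x=\frac{\mathrm d}{\mathrm dt}E_{\mathrm{free}}(\varphi)\quad\text{a.e. in }(0,T_M).
\]
For the gradient part, expand $\mu=a'|\nabla\varphi|^2/2-\mathrm{div}(a\nabla\varphi)+\Psi'(\varphi)$, use $\partial_\mathbf{n}\varphi=0$ and integrate by parts. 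This gives $\int\partial_t\varphi(\tfrac{a'}2|\nabla\varphi|^2-\mathrm{div}(a\nabla\varphi))=\tfrac{\mathrm d}{\mathrm dt}\int a(\varphi)|\nabla\varphi|^2/2$, which is legitimate since $\partial_t\varphi\in L^2(0,T_M;H^1)$ and $\varphi\in L^\infty(0,T_M;W^{2,6})$. I would justify it by approximating $\varphi$ in time by Steklov averages, or equivalently by noting $a(\varphi)^{1/2}\nabla\varphi=\nabla A(\varphi)\in H^1(0,T_M;\mathbf{L}^2)$.

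The singular term is the main obstacle. The potential $\Psi_0'(\varphi)$ is only in $L^\infty(0,T_M;L^6)$, and $\varphi$ is not known to be strictly separated on all of $[0,T_M]$. Here I would invoke the chain rule for the subdifferential of the convex, lower semicontinuous functional $\varphi\mapsto\int\Psi_0(\varphi)$, as already done in the proof of Theorem \ref{criterion} via \cite[Chapter 4, Lemma 4.4]{Barbu}. It applies because $\varphi\in H^1(0,T_M;L^2)$ and $\Psi_0'(\varphi)\in L^2(0,T_M;L^2)$. This gives $\int\Psi_0'(\varphi)\partial_t\varphi=\tfrac{\mathrm d}{\mathrm dt}\int\Psi_0(\varphi)$ a.e., and the quadratic part $-\tfrac{\Theta_0}{2}s^2$ is smooth. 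Adding the two balances then yields \eqref{en-id} for a.e.\ $t\in(0,T_M)$.
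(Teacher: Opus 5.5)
Your proposal is correct and follows the paper's route. The paper likewise tests the reformulated momentum equation \eqref{reformulate} by $\mathbf{v}$ and \eqref{AGG}$_3$ by $\mu$, adds the two results, and simply asserts that these calculations are justified by the strong-solution regularity. Your extra steps are the cancellation via $\mathrm{div}(\rho(\varphi)\mathbf{v}+\mathbf{J})=-\rho'\partial_t\varphi$, the identity $\nabla A(\varphi)=\sqrt{a(\varphi)}\nabla\varphi\in H^1(0,T_M;\mathbf{L}^2)$, and the maximal-monotone chain rule for $\Psi_0$ already used in the proof of Theorem \ref{criterion}; these supply exactly the details the paper leaves implicit.
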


\subsection{\texorpdfstring{Bounds for initial data near  $(\mathbf{0},\varphi_\ast)$}{Bounds for initial data near (\mathbf{0},\varphi_\ast)}}
Since $\varphi _\ast$ is a local minimizer of $E_{\mathrm{free}}$ on $\mathcal{V}_m$,
\cite[Proposition 3.2]{CGGS} indicates that $\varphi_\ast\in H^3(\Omega)$ and there exist $\xi\in(0,1)$ and $\chi>0$ such that
\begin{align}
	\|\varphi _\ast\|_{C(\overline{\Omega})}\leq 1-\xi
    \quad\text{and}\quad E_{\text{free}}(\varphi_\ast)\leq E_{\text{free}}(\varphi)\
    \text{ for all }\varphi\in\mathcal{V}_m: \ \|\varphi_\ast-\varphi\|_{H^1}\leq\chi.\quad\label{p1}
\end{align}
For convenience, let us recall the Sobolev embedding theorem in three dimensions, the Poincar\'e--Wirtinger inequality and Poincar\'e's inequality:
\begin{align}
	& \|f\|_{C(\overline{\Omega})}\leq C_\mathrm{S}\|f\|_{H^2},\quad\qquad\ \ \ \ \ \forall\, f\in H^2(\Omega),\label{SoB}\\
    & \|f\|_{H^1}^2\leq C_\mathrm{P}(\|\nabla f\|_{L^2}^2+|\overline{f}|^2),\quad\forall\, f\in H^1(\Omega),\label{PW}\\
    & \|f\|_{H^1}^2\leq C_\text{P}^2\|\nabla f\|_{L^2}^2,\quad\qquad\quad\ \ \forall\, f\in H_0^1(\Omega),\label{Po}
\end{align}
where the positive constants $C_\mathrm{S}$ and $C_\mathrm{P}$ depend only on $\Omega$.
According to the assumptions in Theorem \ref{global-strong}, we may assume
\begin{align}
	&\|\mathbf{v}_0\|_{H^1}\leq M,\qquad \|\mu_0\|_{H^1}\leq M,\label{b1}\\
	&\|\mathbf{v}_0\|_{L^2}\leq \eta_1,\qquad\|\varphi_0-\varphi _\ast\|_{H^2}\leq \eta_2,\label{b2}
\end{align}
where $\eta_1,$ $\eta_2\in(0,1)$, $M>0$ are constants to be determined later. In \eqref{b1}, we have set
\[\mu_0=-\text{div}(a(\varphi_0)\nabla\varphi_0)+a'(\varphi_0)\frac{|\nabla\varphi_0|^2}{2}+\Psi'(\varphi_0).\]
Due to \eqref{b2} and the fact $\eta_2<1$, we find
\begin{align}
	\|\varphi _0\|_{H^1}\leq \|\varphi_\ast\|_{H^1}+1,
    \quad \|\varphi  _0\|_{H^2}\leq \|\varphi _\ast\|_{H^2}+1.\label{b3}
\end{align}
Moreover, taking
\[0<\eta_2<\min\Big\{1,\frac{\xi}{3C_\mathrm{S}}\Big\},\]
from \eqref{p1}, \eqref{SoB} and \eqref{b2}, we infer that
\begin{align}
	\|\varphi _0\|_{C(\overline{\Omega})}
    &\leq \|\varphi _\ast\|_{C(\overline{\Omega})}+\|\varphi_0-\varphi _\ast\|_{C(\overline{\Omega})}
    \leq \|\varphi _\ast\|_{C(\overline{\Omega})}+C_\mathrm{S}\|\varphi_0-\varphi _\ast\|_{H^2}
    \leq 1-\xi+C_\mathrm{S}\eta_2\leq  1-\frac{2\xi}{3}.\label{s1}
\end{align}
Let
\begin{align}
	K_1=\max_{s\in[-1+\frac{\xi}{3},1-\frac{\xi}{3}]}|\Psi''(s)|,
    \quad K_2=\max_{s\in[-1+\frac{\xi}{3},1-\frac{\xi}{3}]}|\Psi'(s)|,
    \quad K_3=\max_{s\in[-1,1]}|\Psi(s)|.\label{n1}
\end{align}
Then by \eqref{b3} and \eqref{n1}, we have
\begin{align}
	E_{\text{free}}(\varphi _0)
    &\leq \frac{a^\ast}{2}\|\nabla\varphi _0\|_{L^2}^2+|\Omega|\max_{s\in[-1,1]}|\Psi(s)|
    \leq \frac{a^\ast}{2}(\|\varphi _\ast\|_{H^2}+1)^2+|\Omega|K_3:=L_1.\label{b4}
\end{align}

Thanks to Proposition \ref{local-existence}, for the initial data $(\mathbf{v}_0,\varphi _0)$ satisfying the assumptions in Theorem \ref{global-strong},
there exists a universal time $T_1>0$, depending only on $M$, $L_1$ and coefficients of the system, such that problem \eqref{AGG}--\eqref{BC-IC} admits a unique local strong solution $(\mathbf{v},p,\varphi,\mu)$ on $[0,T_1]$, satisfying
\begin{align}
	\|\varphi(t)\|_{C(\overline{\Omega})}\leq 1-\frac{\xi}{3},\quad\forall \,t\in[0,T_1],\label{s2}
\end{align}
and
\begin{align}
    \sup_{t\in[0,T_1]}\|\varphi(t)\|_{H^3}:=M_0,\label{M0}
\end{align}
for some $M_0>0$ depending on $M$, $L_1$ and $m$, but independent of the specific data $\varphi_0$.
Based on the energy identity \eqref{en-id}, the estimates \eqref{b2}, \eqref{b4}, we obtain
\begin{align}
	\mathcal{E}(t)
    &=\frac{1}{2}\int_\Omega{\rho(\varphi(t))}|\mathbf{v}(t)|^2\,\mathrm{d}x
    +\frac{1}{2}\int_\Omega a(\varphi(t))|\nabla\varphi(t)|^2\,\mathrm{d}x
    +\int_\Omega\Psi(\varphi(t))\,\mathrm{d}x\notag\\
	&\leq \mathcal{E}(0)
    =\frac{1}{2}\Big\|\sqrt{\rho(\varphi_0)}\mathbf{v}_0\Big\|_{L^2}^2
    +E_{\text{free}}(\varphi _0)
    \leq \frac{\rho^\ast}{2}+L_1,\quad\forall\, t\in[0,T_1],\notag%\label{b7}
\end{align}
which, together with the Poincar\'e--Wirtinger inequality \eqref{PW}, implies that
\begin{align}
	\|\mathbf{v}(t)\|_{L^2}^2\leq  \frac{\rho^\ast+2L_1+2|\Omega|K_3}{\rho_\ast},\quad\|\varphi(t)\|_{H^1}^2
    \leq C_\mathrm{P}\Big(\frac{\rho^\ast+2L_1+2|\Omega|K_3}{a_\ast}+m^2\Big):=L_2^2,
\label{b10}
\end{align}
%and
% By \eqref{s2} and $\mathbf{(A2)}$, we have
% \begin{align}
% 	E_{\text{free}}(\varphi(t))\geq\frac{a_m}{2}\|\nabla\varphi(t)\|_{L^2}^2-\frac{\Theta_0}{2}|\Omega|,\quad\forall\,t\in[0,T_1].\label{b9}
% \end{align}
% Hence, by \eqref{SPW}, \eqref{b7} and \eqref{b9}, it holds
% \begin{align}
% 	\|\varphi(t)\|_{H^1}^2
%     \leq C_\mathrm{P}(\|\nabla\varphi(t)\|_{L^2}^2+m^2)
%     \leq C_\mathrm{P}(2a_m^{-1}L_1+a_m^{-1}\Theta_0|\Omega|+1):=L_2^2,\label{b10}
% \end{align}
for all $t\in[0,T_1]$.
Besides, according to \cite[(5.17)]{CGGS}, there exists a constant $L_3>0$, depending only on $E_{\text{free}}(\varphi_0)$, such that
\begin{align}
    \|\mu(t)\|_{H^1}\leq L_3(\|\nabla\mu(t)\|_{L^2}+1).\label{0205-mu}
\end{align}

We proceed to derive higher-order estimates. Consider the function
\begin{align}
	\widetilde{\mathcal{H}}(t)=\int_\Omega \nu(\varphi(t))|\mathbb{D}\mathbf{v}(t)|^2\,\mathrm{d}x
    +\frac{\beta_\ast}{2}\int_\Omega b(\varphi(t))|\nabla\mu(t)|^2\,\mathrm{d}x
    +\beta_\ast\int_\Omega\mu(t)\mathbf{v}(t)\cdot\nabla\varphi(t)\,\mathrm{d}x,\label{n2}
\end{align}
where the constant $\beta_\ast$ is chosen as
\[\beta_\ast:=\min\Big\{\frac{\nu_\ast b_\ast}{8C_\mathrm{P}^2},\frac{1}{2}\Big\}\in(0,1).\]
Using H\"older's inequality, Poincar\'e's inequality \eqref{Po}, and the interpolation inequality, we find
% \begin{align*}
% 	\beta_\ast\Big|\int_\Omega \mu(t)\mathbf{v}(t)\cdot\nabla\varphi(t)\,\mathrm{d}x\Big|&=\beta_\ast	\Big|\int_\Omega \varphi(t)\mathbf{v}(t)\cdot\nabla\mu(t)\,\mathrm{d}x\Big|\\
% 	&\leq\beta_\ast \|\varphi(t)\|_{L^6}\|\mathbf{v}(t)\|_{L^3}\|\nabla\mu(t)\|_{L^2}\\
% 	&\leq\beta_\ast \|\varphi(t)\|_{H^1}\|\mathbf{v}(t)\|_{L^2}^{\frac{1}{2}}\|\mathbf{v}(t)\|_{H^1}^{\frac{1}{2}}\|\nabla\mu(t)\|_{L^2}\\
% 	&\leq \beta_\ast L_2 C_\mathrm{P}^{\frac{1}{2}}\Big(\frac{\rho^\ast+2L_1+2|\Omega|K_3}{\rho_\ast}\Big)^{\frac{1}{4}}\|\nabla\mathbf{v}(t)\|_{L^2}^{\frac{1}{2}}\|\nabla\mu(t)\|_{L^2}\\
% 	&\leq \frac{\beta_\ast}{4}\|\sqrt{b(\varphi(t))}\nabla\mu(t)\|_{L^2}^2+ \frac{1}{4}\|\sqrt{\nu(\varphi(t))}\mathbb{D}\mathbf{v}(t)\|_{L^2}^2\\
%     &\qquad+\frac{L_2^4 C_\mathrm{P} ^2\beta_\ast^2(\rho^\ast+2L_1+2|\Omega|K_3)}{b_m\nu_\ast \rho_\ast},
% \end{align*}
\begin{align*}
	\Big|\beta_\ast\int_\Omega\mu(t)\mathbf{v}(t)\cdot\nabla\varphi(t)\,\mathrm{d}x\Big|
    &=\Big|\beta_\ast\int_\Omega\varphi(t)\mathbf{v}(t)\cdot\nabla\mu(t)\,\mathrm{d}x\Big|\\
	&\leq \beta_\ast\|\varphi(t)\|_{L^\infty}\|\mathbf{v}(t)\|_{L^2}\|\nabla\mu(t)\|_{L^2}\\
	&\leq \beta_\ast {C_\mathrm{P}}\|\nabla\mathbf{v}(t)\|_{L^2}\|\nabla\mu(t)\|_{L^2}\\
	&\leq \frac{\beta_\ast C_\mathrm{P}^2}{b_\ast}\|\nabla\mathbf{v}(t)\|_{L^2}^2+\frac{\beta_\ast b_\ast}{4}\|\nabla\mu(t)\|_{L^2}^2\\
	&\leq \frac{\nu_\ast }{4}\|\nabla\mathbf{v}(t)\|_{L^2}^2+\frac{\beta_\ast b_\ast}{4}\|\nabla\mu(t)\|_{L^2}^2,
\end{align*}
which, together with the definition of $\widetilde{\mathcal{H}}(t)$, implies that
\begin{align}
	\widetilde{\mathcal{H}}(t)
    \geq\frac{1}{4}\int_\Omega \nu(\varphi
    (t))|\mathbb{D}\mathbf{v}(t)|^2\,\mathrm{d}x
    +\frac{\beta_\ast }{4}\int_\Omega b(\varphi(t))|\nabla\mu(t)|^2\,\mathrm{d}x,\quad\forall\,t\in[0,T_1].\label{b12}
%    &\mathcal{H}(t)\leq\frac{5}{4}\int_\Omega \nu(\varphi
    % (t))|\mathbb{D}\mathbf{v}(t)|^2\,\mathrm{d}x+\frac{3\beta_\ast }{4}\int_\Omega b(\varphi(t))|\nabla\mu(t)|^2\,\mathrm{d}x +L_4,\quad\forall\,t\in[0,T_2],\label{b12'}
\end{align}
% where the constant $L_4$ is given by
% \[L_4=\frac{L_2^4 C_\mathrm{P} ^2\beta_\ast^2(\rho^\ast+2L_1+2|\Omega|K_3)}{b_m\nu_\ast \rho_\ast}.\]
%
By \eqref{b1}, \eqref{b2}, \eqref{s1}, Korn's inequality \eqref{Korn} and integration-by-parts, we obtain
\begin{align}
	\widetilde{\mathcal{H}}(0)
    &=\|\sqrt{\nu(\varphi_0)}\mathbb{D}\mathbf{v}_0\|_{L^2}^2
    +\frac{\beta_\ast}{2}\|\sqrt{b(\varphi_0)}\nabla\mu(0)\|_{L^2}^2
    +\beta_\ast\int_\Omega\mu(0)\mathbf{v}_0\cdot\nabla\varphi _0\,\mathrm{d}x\notag\\
	&=\|\sqrt{\nu(\varphi _0)}\mathbb{D}\mathbf{v}_0\|_{L^2}^2
    +\frac{\beta_\ast}{2}\|\sqrt{b(\varphi_0)}\nabla\mu(0)\|_{L^2}^2
    -\beta_\ast\int_\Omega\varphi _0\nabla\mu(0)\cdot\mathbf{v}_0\,\mathrm{d}x\notag\\
	&\leq \nu^\ast\|\nabla\mathbf{v}_0\|_{L^2}^2
    +\frac{\beta_\ast b^\ast}{2}\|\nabla\mu(0)\|_{L^2}^2
    +\beta_\ast\|\varphi _0\|_{L^\infty}\|\nabla\mu(0)\|_{L^2}\|\mathbf{v}_0\|_{L^2}\notag\\
	&\leq \nu^\ast\|\nabla\mathbf{v}_0\|_{L^2}^2
    +\frac{\beta_\ast b^\ast}{2}\|\nabla\mu(0)\|_{L^2}^2
    +{C_\mathrm{P}} \beta_\ast \|\nabla\mu(0)\|_{L^2}\|\nabla\mathbf{v}_0\|_{L^2}\notag\\
	&\leq \Big(\nu^\ast+\frac{\beta_\ast b^\ast}{2}+C_\text{P}\beta_\ast\Big)M^2:=M_1.\label{b5}
\end{align}
%and we see that the constant $M_1$ depends on $\nu^\ast$, $b_M$, $C_\mathrm{P}$, $M$.
%
%
% With \eqref{reformulate} in hand, we can follow a similar argument as \cite{GOW},
% together with \cite[(9.11)--(9.13)]{CGGS}, to conclude that
%By \eqref{s2} and the regularity of local strong solutions, {we can follow the same argument as \cite{GOW} to conclude that
%
%$\mathcal{H}(t)$ satisfies the following differential equation:
Similarly to \eqref{0206-1}, we can derive the following differential inequality:
\begin{align}
	\frac{\mathrm{d}}{\mathrm{d}t}\widetilde{\mathcal{H}}(t)\leq C[1+(\widetilde{\mathcal{H}}(t))^5],\quad\forall\,t\in(0,T_1),
    \label{D1}
\end{align}
where the positive constant $C$ depends on $L_{1}$, $\xi$, $\nu_\ast$, $\nu^\ast$, $b_\ast$, $b^\ast$, and $\Omega$.
Owing to \eqref{b5} and \eqref{D1}, there exists $T_2\in(0,T_1]$ such that
\begin{align}
	\widetilde{\mathcal{H}}(t)\leq 2M_1,\quad\forall\,t\in[0,T_2],\notag%\label{b6}
\end{align}
where the time $T_2$ depends on $M_1$, $M$, $L_1$, $\nu_\ast$, $\nu^\ast$, $b_\ast$, $b^\ast$, and $\Omega$, but is independent of the specific data  $(\mathbf{v}_0,\varphi _0)$.
%
% By the equation of $\mu$ and the elliptic regularity theory, we obtain
% \begin{align}
% 	\|\varphi(t)\|_{H^3}\leq \overline{C}(\|\nabla\mu(t)\|_{L^2}+\|\Psi''(\varphi(t))\|_{L^\infty}\|\varphi(t)\|_{H^1}+\|\varphi(t)\|_{H^1}),\label{b11}
% \end{align}
% where the constant $\overline{C}$ depends only on $\Omega$.
%
Now, we choose a sufficiently large constant $M$, such that
\begin{align}
M\geq\max\left\{\Big(\frac{2L_3}{\sqrt{\beta_\ast b_\ast}}+L_3\Big),\ \frac{4C_\mathrm{P}}{\sqrt{\nu_\ast}}\right\}.\label{M}
\end{align}
With the choice of $M$, the constant $M_1$ is also determined (cf. \eqref{b5}). As a consequence, the time $T_2$ can be determined.
Recalling \eqref{b12}, we can obtain the following higher-order estimate
\begin{align}
	\|\mathbf{v}(t)\|_{H^1}
    \leq \frac{4C_\mathrm{P}}{\sqrt{\nu_\ast}}\sqrt{\widetilde{\mathcal{H}}(t)}
    \leq \frac{4C_\mathrm{P}}{\sqrt{\nu_\ast}}\sqrt{2M_1},\quad\forall\,t\in[0,T_2].\label{b13}
\end{align}
% and
% \begin{align}
% 	\|\varphi(t)\|_{H^3}&\leq \overline{C}\Big(\frac{2}{\sqrt{\beta_\ast b_m}}\sqrt{\mathcal{H}(t)+L_4}+K_1L_2+L_2\Big)\notag\\
%     &\leq \overline{C}\Big(\frac{2}{\sqrt{\beta_\ast b_m}}\sqrt{2M_1+L_4}+K_1L_2+L_2\Big):=M_2,\quad\forall\,t\in[0,T_2].\label{b14}
% \end{align}

\subsection{Refined estimates}
\label{refine-es}
In what follows, we derive some refined estimates by means of the {\L}ojasiewicz--Simon approach.

According to the boundedness of $b$ and $\nu$, together with the energy identity \eqref{en-id}, we obtain
\begin{align}
	-\frac{\mathrm{d}}{\mathrm{d}t}\mathcal{E}(t)
    &=2\Big\|\sqrt{\nu(\varphi(t))}\mathbb{D}\mathbf{v}(t)\Big\|_{L^2}^2+\Big\|\sqrt{b(\varphi(t))}\nabla\mu(t)\Big\|_{L^2}^2\geq 2\nu_\ast\|\mathbb{D}\mathbf{v}(t)\|_{L^2}^2+b_\ast\|\nabla\mu(t)\|_{L^2}^2.\label{b15}
\end{align}
By \eqref{n1}, \eqref{s2} and \eqref{b10}, and arguing as in \cite[(10.18)]{CGGS}, we obtain
\begin{align}
	&|E_{\text{free}}(\varphi _0)-E_{\text{free}}(\varphi(t))|
   %  &\quad=\Big|\int_\Omega\frac{a(\varphi_0)}{2}|\nabla\varphi_0|^2\,\mathrm{d}x-\int_\Omega\frac{a(\varphi(t))}{2}|\nabla\varphi(t)|^2\,\mathrm{d}x+\int_\Omega \Psi(\varphi_0)-\Psi(\varphi(t))\,\mathrm{d}x\Big|\notag\\
   %  &\quad\leq \frac{1}{2}\|\nabla\varphi _0-\nabla\varphi(t)\|_{L^2}\|a(\varphi_0)(\nabla\varphi _0+\nabla\varphi(t))\|_{L^2}\notag\\
   % &\qquad+\frac{|a(\varphi_0)-a(\varphi(t))|}{2}\|\nabla\varphi(t)\|_{L^2}^2 +\max_{s\in[-1+\frac{\xi}{3},1-\frac{\xi}{3}]}|\Psi'(s)|\|\varphi _0-\varphi(t)\|_{L^1}\notag\\
	\leq M_3\|\varphi(t)-\varphi _0\|_{H^2},\quad\forall\, t\in [0,T_2],\label{b16}
\end{align}
where the positive constant $M_3$ depends on $L_2$, $K_2$, and $\Omega$.

For any $\epsilon>0$, let
\begin{align}
	\omega=\min\Big\{1,\epsilon,\chi,\kappa,\frac{\xi}{3C_\mathrm{S}},\frac{4C_\ast^{-1} E_0}{5M_3},\sqrt{C_\ast^{-1} E_0}\Big\}\label{omega}
\end{align}
with
\[C_\ast:=\frac{\nu^\ast}{\nu_\ast} + \frac{b^\ast}{b_\ast},\quad   E_0=\frac{\min\{1,M_1\}T_2}{4}.\]
% where
% \begin{align}
% 	E_0=\frac{\min\{1,M_1\}T_2}{4}.\label{E0}
% \end{align}
Then for $\eta_2\in(0,\omega/2]$, we set
\[T_{\eta_2}=\inf\big\{t>0:\ \|\varphi(t)-\varphi _\ast\|_{H^2}\geq\omega\big\}.\]
By the continuity of the strong solution, we have $T_{\eta_2}>0$.
Next, we claim that there exists at least one $\eta_2$ such that $T_{\eta_2}\geq T_2$. If this is not true, for any $\eta_2\in(0,\omega/2]$, we have $T_{\eta_2}\leq T_2$.
First, we consider a special case, that is, there exists a time $t_0\in[0,T_{\eta_2}]$ such that $\mathcal{E}(t_0)=E_{\text{free}}(\varphi _\ast)$.
By the energy identity \eqref{en-id}, it holds
\[\|\nabla\mathbf{v}(t)\|_{L^2}=\|\nabla\mu(t)\|_{L^2}=0,\quad\forall \, t\geq t_0,\]
which implies that the solution on $[t_0,\infty)$ is independent of time and thus the solution becomes global.
Except for the above special case, we have
\[
\mathcal{E}(t)-E_{\text{free}}(\varphi _\ast)>0,\quad\text{for all }t\in[0,T_{\eta_2}].
\]
By Lemma \ref{LS}, \eqref{b13}, \eqref{b15}, and the Poincar\'e--Wirtinger inequality \eqref{PW}, we can deduce that
\begin{align}
	-\frac{\mathrm{d}}{\mathrm{d}t}[\mathcal{E}(t)-E_{\text{free}}(\varphi _\ast)]^\theta
	&=-\theta[\mathcal{E}(t)-E_{\text{free}}(\varphi _\ast)]^{\theta-1}
    \frac{\mathrm{d}}{\mathrm{d}t}\mathcal{E}(t)\notag\\
	&\geq\frac{ \theta (2\nu_\ast\|\mathbb{D}\mathbf{v}(t)\|_{L^2}^2+b_\ast\|\nabla\mu(t)\|_{L^2}^2)}
    {(2\nu^\ast)^{1-\theta}\|\mathbb{D}\mathbf{v}(t)\|_{L^2}^{2(1-\theta)}+C_L\|\mu(t)-\overline{\mu(t)}\|_{L^2}}\notag\\
	&\geq\frac{ \theta  (2\nu_\ast\|\mathbb{D}\mathbf{v}(t)\|_{L^2}+b_\ast\|\nabla\mu(t)\|_{L^2})^2}
    {(2\nu^\ast)^{1-\theta}\big(2C_\mathrm{P}\nu_\ast^{-\frac{1}{2}}\sqrt{2M_1}\big)^{1-2\theta}\|\mathbb{D}\mathbf{v}(t)\|_{L^2}+C_L \sqrt{C_\mathrm{P}}\|\nabla\mu(t)\|_{L^2}}\notag\\[1mm]
	&\geq C_1(\|\nabla\mathbf{v}(t)\|_{L^2}+\|\nabla\mu(t)\|_{L^2})\geq C_1\|\partial_t\varphi(t)\|_{(H^1(\Omega))'},\label{b17}
\end{align}
where the positive constant $C_1$ depends on $\theta$, $\nu_\ast$, $b_\ast$, $C_\mathrm{P}$, $M_1$, $C_L$, and $\Omega$.
From \eqref{b16} and \eqref{b17}, we infer that
\begin{align}
	\int_0^{T_{\eta_2}}\|\partial_t \varphi(t)\|_{(H^1(\Omega))'}\,\mathrm{d}t
    &\leq \frac{1}{C_1}|\mathcal{E}(0)-E_{\text{free}}(\varphi _\ast)|^\theta\notag\\
	&\leq \frac{1}{C_1}\Big(\frac{\rho^\ast}{2}\|\mathbf{v}_0\|_{L^2}^2+M_3\|\varphi _0-\varphi _\ast\|_{H^2}\Big)^\theta\notag\\
	&\leq C_2\Big(\|\mathbf{v}_0\|_{L^2}^{2\theta}+\|\varphi _0- \varphi_\ast\|_{H^2}^\theta\Big),
    \label{b18}
\end{align}
where the positive constant $C_2$ depends on $\theta$, $\rho^\ast$, $C_1$ and $M_3$.
Combining \eqref{M0} and \eqref{b18}, we find
\begin{align}
	\|\varphi(T_{\eta_2})-\varphi _\ast\|_{H^2}
    &\leq \|\varphi _0-\varphi _\ast\|_{H^2}+\|\varphi(T_{\eta_2})-\varphi _0\|_{H^2}\notag\\
	&\leq \|\varphi _0-\varphi _\ast\|_{H^2}
    +C_3\|\varphi(T_{\eta_2})-\varphi _0\|_{H^3}^{\frac{3}{4}}\|\varphi(T_{\eta_2})-\varphi _0\|_{(H^1(\Omega))'}^{\frac{1}{4}}\notag\\
	&\leq \|\varphi _0-\varphi _\ast\|_{H^2}
    +C_3 C_2^{\frac{1}{4}} (2M_0)^{\frac{3}{4}}\Big(\|\mathbf{v}_0\|_{L^2}^{\frac{\theta}{2}}+\|\varphi _0-\varphi _\ast\|_{H^2}^{\frac{\theta}{4}}\Big),\label{b19}
\end{align}
where the positive constant $C_3$ depends only on $\Omega$.
Define
\begin{align}
	&\eta_1:=\min\left\{\frac{\omega}{4\sqrt{\rho^\ast}},\left(\frac{\omega}{4C_3(2M_0)^{\frac{3}{4}}C_2^{\frac{1}{4}}}\right)^{\frac{2}{\theta}}\right\},
    \quad\eta_2:=\min\left\{\frac{\omega}{4},\left(\frac{\omega}{4C_3(2M_0)^{\frac{3}{4}}C_2^{\frac{1}{4}}}\right)^{\frac{4}{\theta}}\right\}.\label{eta2}
\end{align}
Then, by \eqref{b19} and \eqref{eta2}, we have
\[\|\varphi(T_{\eta_2})-\varphi _\ast\|_{H^2}\leq \frac{3}{4}\omega<\omega,\]
which contradicts the definition of $T_{\eta_2}$. Hence, there exists $\eta_2\in (0,\omega/2]$ such that $T_{\eta_2}\geq T_2$.

Thanks to the above choice of $\eta_1$ and $\eta_2$, we see that
\begin{align}
	\|\varphi(t)-\varphi _\ast\|_{H^2}\leq \omega,\quad\forall\, t\in[0,T_2].\label{b20}
\end{align}
Using \eqref{p1}, \eqref{SoB}, \eqref{b2}, \eqref{omega}, \eqref{eta2} and \eqref{b20}, we further get
\begin{align}
	&\|\varphi(t)\|_{C(\overline{\Omega})}
    \leq \|\varphi _\ast\|_{C(\overline{\Omega})}+C_\mathrm{S}\|\varphi(t)-\varphi _\ast\|_{H^2}
    \leq 1-\frac{2\xi}{3},\quad\forall\,t\in[0,T_2],\label{b21}\\
	&\|\varphi(t)-\varphi _0\|_{H^2}
    \leq \|\varphi(t)-\varphi _\ast\|_{H^2}+\|\varphi _0-\varphi _\ast\|_{H^2}
    \leq \frac{ E_0}{M_3 C_\ast},\quad\forall\,t\in[0,T_2].\label{b22}
\end{align}
Finally, from \eqref{b10}, \eqref{n2}, \eqref{b15}, \eqref{b16}, \eqref{omega} and \eqref{b22}, we can obtain
\begin{align}
	\int_0^{T_2}\widetilde{\mathcal{H}}(t)\,\mathrm{d}t%=\int_0^{T_2}\Big(\|\sqrt{\nu(\varphi(t))}\mathbb{D}\mathbf{v}(t)\|_{L^2}^2+\frac{\beta_\ast}{2}\|\sqrt{b(\varphi(t))}\nabla\mu(t)\|_{L^2}^2+\beta_\ast\int_\Omega\mu(t)\mathbf{v}(t)\cdot\nabla\varphi(t)\,\mathrm{d}x\Big)\,\mathrm{d}t\notag\\
	&\leq \int_0^{T_2}\Big(\nu^\ast\|\mathbb{D}\mathbf{v}(t)\|_{L^2}^2+\frac{\beta_\ast b^\ast}{2}\|\nabla\mu(t)\|_{L^2}^2
    +\beta_\ast\|\varphi(t)\|_{L^\infty} \|\nabla\mu(t)\|_{L^2}\|\mathbf{v}(t)\|_{L^2}\Big)\,\mathrm{d}t\notag\\
	&\leq \int_0^{T_2}\Big(\nu^\ast\|\mathbb{D}\mathbf{v}(t)\|_{L^2}^2+\frac{\beta_\ast b^\ast}{2}\|\nabla\mu(t)\|_{L^2}^2
    +C_\mathrm{P}\beta_\ast\|\nabla\mu(t)\|_{L^2} \|\nabla\mathbf{v}(t)\|_{L^2}\Big)\,\mathrm{d}t\notag\\
	&\leq \int_0^{T_2}\Big(\nu^\ast\|\mathbb{D}\mathbf{v}(t)\|_{L^2}^2+\frac{\beta_\ast b^\ast}{2}\|\nabla\mu(t)\|_{L^2}^2
    +\frac{\beta_\ast C_\mathrm{P}^2}{b^\ast}\|\nabla\mathbf{v}(t)\|_{L^2}^2+\frac{\beta_\ast b^\ast}{4}\|\nabla\mu(t)\|^2_{L^2}\Big)\,\mathrm{d}t\notag\\
	&\leq\int_0^{T_2}\Big(2\nu^\ast\|\mathbb{D}\mathbf{v}(t)\|_{L^2}^2+b^\ast\|\nabla\mu(t)\|^2_{L^2}\Big)\,\mathrm{d}t\notag\\
	&\leq \Big(\frac{\nu^\ast}{\nu_\ast}+\frac{b^\ast}{b_\ast}\Big)\Big(\mathcal{E}(0)-\mathcal{E}(T_2)\Big)\notag\\
	&=C_\ast\Big(\frac{1}{2}\Big\|\sqrt{\rho(\varphi_0)}\mathbf{v}_0\Big\|_{L^2}^2-\frac{1}{2}\Big\|\sqrt{\rho(\varphi(T_2))}\mathbf{v}(T_2)\Big\|_{L^2}^2
    +E_{\text{free}}(\varphi _0)-E_{\text{free}}(\varphi(T_2))\Big)\notag\\
	&\leq C_\ast \Big(\frac{\rho^\ast\eta_1^2}{2}+|E_{\text{free}}(\varphi _0)-E_{\text{free}}(\varphi(T_2))|\Big)\notag\\
	&\leq C_\ast\Big(\frac{\omega^2}{32}+M_3\|\varphi _0-\varphi(T_2)\|_{H^2}\Big)\notag\\
	&\leq C_\ast\Big(\frac{\omega^2}{32}+M_3\frac{C_\ast^{-1} E_0}{M_3}\Big)
	\leq C_\ast\Big(E_0 C_\ast^{-1}+M_3\frac{C_\ast^{-1} E_0}{M_3}\Big)\leq 2E_0.\label{b23}
\end{align}
%The estimate \eqref{b23} provides us with a better estimate than \eqref{b13}.

\subsection{Global strong solutions and Lyapunov stability}
We are in a position to prove Theorem \ref{global-strong}.
\medskip

\emph{Step 1.}
According to Section \ref{refine-es}, we have shown that problem \eqref{AGG}--\eqref{BC-IC} admits a unique strong solution on $[0,T_2]$ that satisfies the estimates \eqref{b20}--\eqref{b23}.
% By the definition of $\beta_\ast$ and $\|\varphi(t)\|_{L^\infty}\leq1$, there holds
% \begin{align*}
% 	\Big|\beta_\ast\int_\Omega\mu(t)\mathbf{v}(t)\cdot\nabla\varphi(t)\,\mathrm{d}x\Big|&=\Big|\beta_\ast\int_\Omega\varphi(t)\mathbf{v}(t)\cdot\nabla\mu(t)\,\mathrm{d}x\Big|\\
% 	&\leq \beta_\ast\|\varphi(t)\|_{L^\infty}\|\mathbf{v}(t)\|_{L^2}\|\nabla\mu(t)\|_{L^2}\\
% 	&\leq \beta_\ast C_\mathrm{P}\|\nabla\mathbf{v}(t)\|_{L^2}\|\nabla\mu(t)\|_{L^2}\\
% 	&\leq \frac{\beta_\ast C_\mathrm{P}^2}{b_m}\|\nabla\mathbf{v}(t)\|_{L^2}^2+\frac{\beta_\ast b_m}{4}\|\nabla\mu(t)\|_{L^2}^2\\
% 	&\leq \frac{\nu_\ast }{4}\|\nabla\mathbf{v}(t)\|_{L^2}^2+\frac{\beta_\ast b_m}{4}\|\nabla\mu(t)\|_{L^2}^2,
% \end{align*}
% which, together with the definition of $\mathcal{H}(t)$, infers that
% \begin{align}
% 	\mathcal{H}(t)\geq\frac{1}{4}\int_\Omega \nu(\varphi(t))|\mathbb{D}\mathbf{v}(t)|^2\,\mathrm{d}x+\frac{\beta_\ast b_m}{4}\int_\Omega |\nabla\mu(t)|^2\,\mathrm{d}x\geq0,\quad\forall\,t\in[0,T_2].\notag%\label{b24}
% \end{align}
From \eqref{b12} and \eqref{b23}, we infer that
\begin{align*}
	\int_{\frac{1}{2}T_2}^{T_2}\widetilde{\mathcal{H}}(t)\,\mathrm{d}t\leq \int_{0}^{T_2}\widetilde{\mathcal{H}}(t)\,\mathrm{d}t\leq 2E_0.
\end{align*}
Thanks to the definition of $E_0$, there exists a time
$t^\ast\in[T_2 /2,T_2]$,
such that
\begin{align}
	\widetilde{\mathcal{H}}(t^\ast)\leq \min\{1,M_1\}.\label{b25}
\end{align}
From \eqref{0205-mu}, \eqref{b12}, \eqref{b13} and \eqref{b25}, we obtain the following estimates
\begin{align}
\|\mu(t^\ast)\|_{H^1}\leq \frac{2L_3}{\sqrt{\beta_\ast b_\ast}}\sqrt{\widetilde{\mathcal{H}}(t^\ast)}+L_3\leq  \frac{2L_3}{\sqrt{\beta_\ast b_\ast}}+L_3,\quad
	\|\mathbf{v}(t^\ast)\|_{H^1}\leq \frac{4C_\mathrm{P}}{\sqrt{\nu_\ast}}\sqrt{\widetilde{\mathcal{H}}(t^\ast)}\leq\frac{4C_\mathrm{P}}{\sqrt{\nu_\ast}}.\label{b27}
\end{align}
Recalling the definition of $M$ (cf. \eqref{M}), we find
\begin{align}
	\|\mu(t^\ast)\|_{H^1}\leq M,\quad\|\mathbf{v}(t^\ast)\|_{H^1}\leq M.\label{b28}
\end{align}
Hence, $(\mathbf{v}(t^\ast),\mu(t^\ast))$ satisfies the same bound as the initial data $(\mathbf{v}_0,\mu_0)$ (comparing \eqref{b1} with \eqref{b28}). In addition, we have
$\mathcal{E}(t^\ast)\leq \mathcal{E}(0)$,
and the same strict separation property \eqref{b21}.
\medskip

\emph{Step 2. } Now we take $t^\ast$ as the initial time. Similarly to Step 1, we can obtain a local strong solution on the time interval $[t^\ast,t^\ast+T_2]$.
By the uniqueness of local strong solutions, we indeed obtain a (unique) local strong solution on $[0,t^\ast+T_2]$,
and this solution satisfies the estimates \eqref{0205-mu}, \eqref{b13} on $[0,t^\ast+T_2]$.
Then, we can derive refined estimates for the local strong solution on $[0,\frac{3}{2}T_2]$ as \eqref{b15}--\eqref{b23}.
Under the same choice of $\eta_1$ and $\eta_2$, we are able to derive an estimate similar to \eqref{b23}, that is,
$$
\int_0^{\frac{3}{2}T_2}\widetilde{\mathcal{H}}(t)\,\mathrm{d}t\leq 2E_0.
$$
Observing that $\widetilde{\mathcal{H}}(t)\geq0$, like in Step 1, we can find a new time
$t^{\ast\ast}\in [T_2,\frac{3}{2}T_2]$
satisfying \eqref{b25} and \eqref{b27}.
In addition, we can derive a refined higher-order estimate \eqref{b28} at this new time $t^{\ast\ast}$.
By iteration, we are able to construct a global strong solution with expected properties as in the statement of Theorem \ref{global-strong}. The proof is complete.
\hfill $\square$

\appendix
\renewcommand{\appendixname}{}
\renewcommand{\thesection}{\Alph{section}}

\section{Useful Tools}
\label{Appendix}
\setcounter{equation}{0}

In the following, we report some useful tools that have been used in this study.

\subsection{\texorpdfstring{Global mollifier in $\Omega$}{Global mollifier in \Omega}}
\label{APP-modif}
We first recall the following local approximation:
if $f\in L^p(0,T;W^{1,p}(\Omega))$, then
\begin{align}
    f^{\varepsilon}\to f\quad\text{in }\ L^p_{\mathrm{loc}}(0,T;W_\mathrm{loc}^{1,p}(\Omega))
    \quad \forall\, p\in[1,\infty),\quad\text{as }\varepsilon\to0,\notag%\label{convergence-1}
\end{align}
where
\begin{align}
    f^\varepsilon(x,t)=\int_0^T\int_\Omega f(y,s)\eta_{\varepsilon}(x-y,t-s)\,\mathrm{d}y\,\mathrm{d}s,
    \quad \eta_{\varepsilon}(x,t)=\frac{1}{\varepsilon^4}\eta\Big(\frac{x}{\varepsilon},\frac{t}{\varepsilon}\Big),\label{time-mollifier}
\end{align}
with $\eta(x,t)$ being the standard mollifier supported in the  unit ball around $({\bf 0}, 0)$ in $\mathbb{R}^3\times \mathbb{R}$.
Here, the subscript ``loc'' means that the convergence holds in
$L^p(I;W^{1,p}(U))$ for any open sets $I\subset\subset (0,T)$ and $U\subset\subset \Omega$.
%
% Concerning the spatial variable, we have
% \begin{align}
% 	f^{\varepsilon,\sigma}\to f^{\sigma}\quad\text{in }\ L^r_{\mathrm{loc}}(0,T;W^{1,p}_{\mathrm{loc}}(\Omega))\quad\forall\, r,p\in[1,+\infty),\quad\text{as }\ \varepsilon\to0,\notag%\label{local-approximation}
% \end{align}
% where
% \begin{align}
% 	f^{\varepsilon,\sigma}(x,t)&=\int_\Omega f^\sigma(y,s)\eta_{2,\varepsilon}(x-y)\,\mathrm{d}y\notag\\
%     &=\int_0^T \int_\Omega f(y,s)\eta_{2,\varepsilon}(x-y)\eta_{1,\sigma}(t-s)\,\mathrm{d}x\,\mathrm{d}s,\quad\eta_{2,\varepsilon}(x)=\frac{1}{\varepsilon^3 }\eta_2\Big(\frac{x}{\varepsilon}\Big),\label{mollifier}
% \end{align}
% with $\eta_2(x)$ being the standard mollifier supported in a unit ball in $\mathbb{R}^3$.
% %
% For simplicity, we denote
% \[\eta_{\varepsilon,\sigma}(x,t)=\eta_{1,\sigma}(t)\eta_{2,\varepsilon}(x)=\frac{1}{\sigma\varepsilon^3}\eta_1\Big(\frac{t}{\sigma}\Big)\eta_2\Big(\frac{x}{\varepsilon}\Big).\]
%

To deal with boundary effects,
we apply a result in \cite{CLWX2020} on the construction of a global mollifier in $\Omega$.
Since $\partial\Omega\in C^1$, for a fixed point $\mathrm{P}\in \partial\Omega$,
there exist some $r_\mathrm{P}>0$ and a $C^1$ function $h:\mathbb{R}^2\to\mathbb{R}$ such that,
upon relabeling the coordinate axis if necessary, we have
\[\Omega\cap B(\mathrm{P},r_\mathrm{P})=\{x\in B(\mathrm{P},r_\mathrm{P}): x_3>h(x_1,x_2)\},\]
where $B(\mathrm{P},r_\mathrm{P})$ is an open ball centered at $\mathrm{P}$ with radius $r_\mathrm{P}$.
Since the boundary $\partial\Omega$ is compact, there are finitely many points $\mathrm{P}_i\in\partial\Omega$, radii $r_i>0$,
and corresponding sets $V_i=\Omega\cap B(\mathrm{P}_i, r_i/2)$ for $i=1,...,k$, such that $\partial\Omega\subset \cup_{i=1}^k \overline{V_i}$.
We also take an open set $V_0\subset\subset\Omega$ that satisfies $\Omega\subset \bigcup_{i=0}^k V_i$.
For a small constant $0<\varepsilon< r_{\ast}/8$ where $r_\ast:=\min\{r_1,...,r_k\}$, we define the shifted point
\begin{align}
	x^\varepsilon:= x-\varepsilon \mathbf{n}(\mathrm{P}_i),\quad\text{for all }x\in V_i,\notag%\label{shifted-point}
\end{align}
where $\mathbf{n}(\mathrm{P}_i)$ is the unit outward normal vector of $\partial\Omega$ at $\mathrm{P}_i$.
Then it is obvious that
\[B(x^\varepsilon,\varepsilon)\subset \Omega\cap B(\mathrm{P}_i,r_i),\quad\forall\, x\in V_i.\]
Define the shifted function
\begin{align}
	\widetilde{f}(x,t)=f(x^\varepsilon,t),\quad x\in V_i.\notag%\label{shifted-function}
\end{align}
There is room to mollify $\widetilde{f}(x,t)$ like \eqref{time-mollifier}, that is,
\begin{align}
	\widetilde{f}_i^{\varepsilon}
    =\int_0^T \int_{\widetilde{V}_i}\widetilde{f}(y,s)\eta_{\varepsilon}(x-y,t-s)\,\mathrm{d}y\,\mathrm{d}s
    =\int_0^T \int_{\widetilde{V}_i-\varepsilon\mathbf{n}(\mathrm{P}_i)} f(y,s)\eta_{\varepsilon}(x^\varepsilon-y,t-s)\,\mathrm{d}y\,\mathrm{d}s,\label{mollifies-shifted}
\end{align}
for every $(x,t)\in V_i\times(\varepsilon,T-\varepsilon)$
and $\widetilde{V}_i$ can be simply taken to be $B(\mathrm{P}_i,(r_i/{2})+2\varepsilon)\cap \Omega$.
Then we have (cf. \cite[(2.6)]{CLWX2020}):
\begin{align}
\lim_{\varepsilon\to0}\| \widetilde{f}_i^{\varepsilon}-f\|_{L^p_{\mathrm{loc}}(0,T;W^{1,p}(V_i))}=0,\quad\text{for }i=1,...,k.\notag%\label{0610-est-1}
\end{align}

Now, we report the following result (see, e.g., \cite[Proposition 2.1]{CLWX2020}).

\begin{pro}
%\label{mollifier-prop1}
    Let $\{\xi_i\}_{i=0}^k$ be a smooth partition of unity subordinate to $V_i$, that is,
    \[0\leq \xi_i\leq1,\ \ \xi_i\in C^\infty(V_i),\ \ \mathrm{supp}
\,\xi_i\subset V_i,\ \ \sum_{i=0}^k \xi_i=1\ \ \text{in }\Omega.\]
Define
\begin{align}
    [f]^{\varepsilon}(x,t)=\xi_0(x)f^{\varepsilon}(x,t)+\sum_{i=1}^k\xi_i(x)\widetilde{f}_i^{\varepsilon}(x,t)\quad\forall\,(x,t)\in\Omega\times(0,T),\label{mollify-profile}
\end{align}
where $f^{\varepsilon}$ and $\widetilde{f}_i^{\varepsilon}$ are defined in \eqref{time-mollifier} and \eqref{mollifies-shifted}, respectively.
Then, sending $\varepsilon\to0$, we have
\begin{align}
    [f]^{\varepsilon}\to f\quad\text{in }L^p_\mathrm{loc}(0,T;W^{1,p}(\Omega)).\label{mollify-convergence}
\end{align}
Moreover, as $\varepsilon\to0$, it holds
\begin{align}
   [f]^{\varepsilon}-f^{\varepsilon}\to0 \quad\text{in }L^p_\mathrm{loc}(0,T;W^{1,p}(V_0))
   \quad \text{and}\quad
   [f]^{\varepsilon}-\widetilde{f}_i^{\varepsilon}\to0 \quad\text{in }L^p_\mathrm{loc}(0,T;W^{1,p}(V_i)).
   \label{local-convergence-1}
\end{align}
\end{pro}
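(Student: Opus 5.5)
The plan is to reduce both claims to the continuity of translations in $L^p(I;W^{1,p})$, working chart by chart with the partition of unity. Fix $I\subset\subset(0,T)$ and take $\varepsilon$ so small that $I\subset(\varepsilon,T-\varepsilon)$. Since $\sum_{i=0}^k\xi_i=1$ in $\Omega$,
\[
[f]^\varepsilon-f=\xi_0\,(f^\varepsilon-f)+\sum_{i=1}^k\xi_i\,(\widetilde f_i^{\varepsilon}-f),
\]
\[
\nabla([f]^\varepsilon-f)=\sum_{i=0}^k\Big(\nabla\xi_i\,(g_i^\varepsilon-f)+\xi_i\,\nabla(g_i^\varepsilon-f)\Big),
\]
where $g_0^\varepsilon=f^\varepsilon$ and $g_i^\varepsilon=\widetilde f_i^{\varepsilon}$ for $i\ge 1$. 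Because $\xi_i$ and $\nabla\xi_i$ are bounded and $\mathrm{supp}\,\xi_i\subset V_i$, it is enough to prove
\[
f^\varepsilon\to f \ \text{in } L^p(I;W^{1,p}(\mathrm{supp}\,\xi_0)),\qquad
\widetilde f_i^{\varepsilon}\to f \ \text{in } L^p(I;W^{1,p}(V_i)),\quad i=1,\dots,k .
\]
The first of these is the standard interior mollifier convergence already recalled above, because $\mathrm{supp}\,\xi_0\subset V_0\subset\subset\Omega$.

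For a boundary chart $i\ge 1$, I would use \eqref{mollifies-shifted}: for $x\in V_i$ we have $\widetilde f_i^{\varepsilon}(x,t)=(f*\eta_\varepsilon)(x^\varepsilon,t)$, and the space-time ball of radius $\varepsilon$ around $(x^\varepsilon,t)$ lies in $(\Omega\cap B(\mathrm P_i,r_i))\times(0,T)$. Consequently, differentiating under the integral gives $\nabla\widetilde f_i^{\varepsilon}=\widetilde{(\nabla f)}{}_i^{\varepsilon}$, so the gradient is handled by the same argument applied to $\nabla f\in L^p$. Writing $\tau_{h}g(x,t)=g(x-h_x,t-h_t)$ with $h=(h_x,h_t)$, Minkowski's integral inequality yields
\[
\|\widetilde f_i^{\varepsilon}-f\|_{L^p(I\times V_i)}\le\sup_{|h|\le 2\varepsilon}\|\tau_h f-f\|_{L^p(I\times V_i)},
\]
where the shifts $h$ combine the normal shift $\varepsilon\mathbf n(\mathrm P_i)$ with the mollification offset. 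All points $x-h_x$ remain inside $U_i:=\Omega\cap B(\mathrm P_i,r_i)$ by the shifted-ball property $B(x^\varepsilon,\varepsilon)\subset\Omega\cap B(\mathrm P_i,r_i)$ recalled above; near $\partial B(\mathrm P_i,r_i)$ there is room because $V_i$ sits inside the half-radius ball and $\varepsilon<r_\ast/8$. The same inequality holds with $f$ replaced by $\nabla f$.

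The main obstacle is showing that $\sup_{|h|\le2\varepsilon}\|\tau_hf-f\|\to0$ on the non-compactly contained set $V_i$, where the only information is that $f\in L^p(0,T;W^{1,p}(U_i))$. I would first try density. The set $U_i$ is a $C^1$-graph domain intersected with a ball, so it satisfies the segment condition, and therefore functions in $C^\infty(\overline{U_i}\times[0,T])$ are dense in $L^p(0,T;W^{1,p}(U_i))$. For such a smooth $g$, uniform continuity gives $\tau_hg\to g$ uniformly. Moreover $\|\tau_h(f-g)\|_{L^p(I\times V_i)}\le\|f-g\|_{L^p(0,T;L^p(U_i))}$, since the shifted region stays in $U_i\times(0,T)$. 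A $3\epsilon$-argument then gives the limit, and applying the same argument to $\nabla f$ proves \eqref{mollify-convergence}.

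Finally, \eqref{local-convergence-1} follows from the triangle inequality. On $V_0$, both $[f]^\varepsilon$ and $f^\varepsilon$ converge to $f$ in $L^p_{\mathrm{loc}}(0,T;W^{1,p}(V_0))$: the first by \eqref{mollify-convergence}, the second by interior convergence (we may take $V_0$ slightly shrunk, or note that the chart argument also applies to $f^\varepsilon$ there). On $V_i$, both $[f]^\varepsilon$ and $\widetilde f_i^{\varepsilon}$ converge to $f$ by \eqref{mollify-convergence} and the chart estimate above, so their difference tends to $0$.
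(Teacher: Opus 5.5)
Your proof is correct and follows the argument the paper relies on by citing \cite[Proposition 2.1]{CLWX2020}, which is modeled on Evans' global approximation theorem: use the partition of unity to reduce to interior mollification on $V_0$ and to $\widetilde f_i^{\varepsilon}\to f$ in $L^p(I;W^{1,p}(V_i))$, and obtain the latter from continuity of translations, using that all shifted points stay in $\Omega\times(0,T)$ and that $\nabla\widetilde f_i^{\varepsilon}=\widetilde{(\nabla f)}{}_i^{\varepsilon}$. One simplification: the segment-condition density step for $U_i$ is not needed, because the translates never leave $\Omega\times(0,T)$, so continuity of translations in $L^p(\mathbb{R}^4)$ applied to the zero extensions of $f$ and $\nabla f$ already gives the chart estimate.
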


\subsection{Commutator estimates and Hardy-type embedding}

First, we recall two lemmas on the commutator estimates, see \cite[Lemma 2.3]{Lions} and \cite[Lemma 2.2]{CLWX2020}.

\begin{lem}
%	\label{Commutator-lemma-1}
Suppose $\rho\in W^{1,r_1}(0,T;W^{1,r_1}(\Omega))$, $u\in L^{r_2}([0,T]\times\Omega)$,
and $1\leq r, r_1, r_2\leq \infty$, $1/ r_1+1/r_2=1/r$. Then we have
\begin{align}
	&\|\partial((\rho u)^{\varepsilon}-\rho u^{\varepsilon})\|_{L_{\mathrm{loc}}^r([0,T]\times\Omega)}
    \leq C\|u\|_{L^{r_2}([0,T]\times\Omega)}\|\partial\rho\|_{L^{r_1}([0,T]\times\Omega)},\notag\\
    &\|\partial((\widetilde{\rho}\widetilde{u})_i^{\varepsilon}-\rho\widetilde{u}_i^{\varepsilon})\|_{L_{\mathrm{loc}}^r(0,T;L^r(V_i))}
    \leq C\|u\|_{L^{r_2}([0,T]\times\Omega)}\|\partial\rho\|_{L^{r_1}([0,T]\times\Omega)},\notag%\label{commutator-2}
\end{align}
where $\partial=\partial_t$ or $\partial=\nabla$, and $V_i$ $(i=1,...,k)$ are the same as mentioned above.
Furthermore,
\begin{align}
	&\partial((\rho u)^{\varepsilon}-\rho u^{\varepsilon})\to0
    \quad\text{in} \  L_{\mathrm{loc}}^{\underline{r}}([0,T]\times\Omega)\quad\text{as }\varepsilon\to0,\label{commutator-convergence-1}\\
    &\partial((\widetilde{\rho}\widetilde{u})_i^{\varepsilon}-\rho\widetilde{u}_i^{\varepsilon})\to0
    \quad \text{in} \ L^{\underline{r}}_{\mathrm{loc}}(0,T;L^{\underline{r}}(V_i))\quad\text{as }\varepsilon\to0,\label{commutator-convergence-2}	
\end{align}
where $\underline{r}=r$ if $r_2<\infty$ and $\underline{r}<r$ if $r_2=\infty$.
\end{lem}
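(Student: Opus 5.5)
This is the classical Friedrichs/DiPerna--Lions commutator lemma, extended to the boundary-shifted mollification of \cite{CLWX2020}. I would prove it directly from the definition of the mollifier rather than by invoking any later result.

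\emph{Interior estimate.} First I would treat the case $\partial=\nabla$ and write the commutator as
\[
(\rho u)^{\varepsilon}(x,t)-\rho(x,t)u^{\varepsilon}(x,t)=\int\!\!\int \big(\rho(y,s)-\rho(x,t)\big)u(y,s)\,\eta_\varepsilon(x-y,t-s)\,\mathrm{d}y\,\mathrm{d}s .
\]
Differentiating in $x$ gives two terms:
\begin{itemize}
\item[(i)] $-\nabla\rho(x,t)\,u^\varepsilon(x,t)$, which by H\"older is bounded in $L^r$ by $\|\nabla\rho\|_{L^{r_1}}\|u\|_{L^{r_2}}$;
\item[(ii)] $\int\!\!\int(\rho(y,s)-\rho(x,t))u(y,s)\nabla_x\eta_\varepsilon\,\mathrm{d}y\,\mathrm{d}s$.
\end{itemize}
For (ii), I would write $\rho(y,s)-\rho(x,t)=\int_0^1 D\rho(x+\theta(y-x),t+\theta(s-t))\cdot(y-x,s-t)\,\mathrm{d}\theta$ and use $|(y-x,s-t)|\,|\nabla\eta_\varepsilon|\le C\varepsilon^{-4}\mathbf 1_{B_\varepsilon}$. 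Minkowski's integral inequality then bounds (ii) in $L^r_{\rm loc}$ by $C\|D\rho\|_{L^{r_1}}\|u\|_{L^{r_2}}$, where $D$ is the full space--time gradient. This is why $\rho$ is assumed $W^{1,r_1}$ in both $t$ and $x$. The case $\partial=\partial_t$ is identical, with $\partial_t\eta_\varepsilon$ in place of $\nabla\eta_\varepsilon$. Localization ($\mathrm{loc}$) is needed only so that $B(x,\varepsilon)\times(t-\varepsilon,t+\varepsilon)$ stays inside the domain.

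\emph{Boundary estimate.} For the shifted version on $V_i$, I would use \eqref{mollifies-shifted}: $\widetilde f^\varepsilon_i(x,t)=f^\varepsilon(x^\varepsilon,t)$ with $x^\varepsilon=x-\varepsilon\mathbf n(\mathrm P_i)$ and $B(x^\varepsilon,\varepsilon)\subset\Omega$. The commutator then splits as
\[
\big[(\rho u)^\varepsilon(x^\varepsilon,t)-\rho(x^\varepsilon,t)u^\varepsilon(x^\varepsilon,t)\big]+\big(\rho(x^\varepsilon,t)-\rho(x,t)\big)u^\varepsilon(x^\varepsilon,t).
\]
The first bracket is handled by the interior estimate at the translated point. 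Since the shift is a translation, the Jacobian is $1$ and no boundary is crossed. The second term has derivative
\[
(\nabla\rho(x^\varepsilon)-\nabla\rho(x))\,u^\varepsilon+(\rho(x^\varepsilon)-\rho(x))\,\nabla u^\varepsilon(x^\varepsilon).
\]
The first piece is bounded by H\"older. For the second piece, $|\rho(x^\varepsilon)-\rho(x)|\le\varepsilon\int_0^1|\nabla\rho(x-\theta\varepsilon\mathbf n)|\,\mathrm d\theta$ and $\|\varepsilon\nabla\eta_\varepsilon\|_{L^1}\le C$, so again the bound is $C\|D\rho\|_{L^{r_1}}\|u\|_{L^{r_2}}$. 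I expect this shifted term to be the main technical point: one has to check that all translated arguments remain in $\Omega\cap B(\mathrm P_i,r_i)$, which follows from $\varepsilon<r_\ast/8$.

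\emph{Convergence.} For \eqref{commutator-convergence-1} and \eqref{commutator-convergence-2} I would use density. If $\rho$ and $u$ are smooth, each commutator derivative tends to $0$ uniformly on compact sets: this is just $\nabla\rho\,u^\varepsilon-\nabla\rho\,u$ plus a term converging to $\nabla\rho\cdot u$, and the two cancel. The uniform bilinear bound just proved then extends the convergence to general data, since we may approximate $u$ in $L^{r_2}$ and $\rho$ in $W^{1,r_1}$. This works when $r_2<\infty$. When $r_2=\infty$, $u$ cannot be approximated in $L^\infty$. In that case I would instead use $u\in L^{\tilde r_2}_{\rm loc}$ for finite $\tilde r_2$ and rerun the argument with $1/r_1+1/\tilde r_2=1/\underline r$, which gives convergence only in $L^{\underline r}$ with $\underline r<r$, matching the statement.
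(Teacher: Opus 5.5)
Your argument is the standard Friedrichs/DiPerna--Lions proof. The paper gives no proof of its own and simply cites \cite{Lions} and \cite{CLWX2020}. The core of your argument is sound: writing the commutator as an integral of $\rho(y,s)-\rho(x,t)$ against $\partial\eta_\varepsilon$, applying Minkowski's inequality, and splitting the shifted commutator at $x^\varepsilon$.

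You correctly noticed that the bound needs the full space--time gradient $D\rho$. Make explicit that this corrects the printed inequality rather than interprets it. With the space--time mollifier \eqref{time-mollifier}, the version with $\|\partial\rho\|_{L^{r_1}}$ alone is false. For example, take $\rho=t$, $u(y,s)=s\,y_1$ and $\partial=\partial_{x_1}$. At interior points
\[
\partial_{x_1}\big((\rho u)^{\varepsilon}-\rho u^{\varepsilon}\big)(x,t)=\int_0^T\!\!\int_\Omega (s^2-t^2)\,\eta_\varepsilon(x-y,t-s)\,\mathrm{d}y\,\mathrm{d}s=\varepsilon^2\int\!\!\int \tau^2\,\eta(z,\tau)\,\mathrm{d}z\,\mathrm{d}\tau\neq 0,
\]
although $\nabla\rho\equiv 0$. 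The symmetric example is $\rho=x_1$ with the same $u$ and $\partial=\partial_t$. This does no harm to the paper, which only invokes \eqref{commutator-convergence-1}--\eqref{commutator-convergence-2}.

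There are two smaller points.

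\emph{Density step.} You approximate $\rho$ in $W^{1,r_1}$, which is impossible for $r_1=\infty$. That case is covered by the lemma: then $r=r_2<\infty$ and convergence in $L^r_{\mathrm{loc}}$ is asserted. It also reappears in your reduction of the case $r_2=\infty$ whenever $r_1=\infty$. The fix is to keep $\rho$ fixed and approximate only $u$. For smooth $u$,
\[
\partial\big((\rho u)^{\varepsilon}-\rho u^{\varepsilon}\big)=\big((\partial\rho)\,u\big)^{\varepsilon}-(\partial\rho)\,u^{\varepsilon}+(\rho\,\partial u)^{\varepsilon}-\rho\,(\partial u)^{\varepsilon}.
\]
Both differences tend to $0$ in $L^{p}_{\mathrm{loc}}$ for every finite $p\le r_1$, hence in $L^{r}_{\mathrm{loc}}$. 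The shifted case is identical, using $L^p$-continuity of translations. Your uniform bilinear bound then controls the error from $u-u_n$.

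\emph{Boundary geometry.} Your boundary step rests on the inclusion $B(x^\varepsilon,\varepsilon)\subset\Omega$, and this does not follow from $\varepsilon<r_\ast/8$. The configuration is scale invariant. Near a boundary point $Q$ with $\mathbf{n}(Q)\neq\mathbf{n}(\mathrm{P}_i)$, a ball of radius $\varepsilon$ shifted by exactly $\varepsilon$ protrudes from $\Omega$ by roughly $\varepsilon\,(1-\mathbf{n}(\mathrm{P}_i)\cdot\mathbf{n}(Q))$. The inclusion has to be built into the construction. In general this means a shift $\lambda\varepsilon$ with $\lambda\,\mathbf{n}(\mathrm{P}_i)\cdot\mathbf{n}(Q)>1$ on the patch. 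Your estimates go through verbatim with $\lambda\varepsilon$ in place of $\varepsilon$.
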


% The following two commutator estimates can be found in \cite[Lemma 2.2]{CLWX2020}.

% \begin{lem}
% 	\label{Commutator-lemma-2}
% 	Under the same assumptions listed in Lemma \ref{Commutator-lemma-1}, we have
% 	% \begin{align}
% 	% 	\|\partial_t((\widetilde{\rho}\widetilde{u})_i^\varepsilon-\rho\widetilde{u}_i^\varepsilon)\|_{L_{\mathrm{loc}}^r(0,T;L^r(V_i))}\leq C\|u\|_{L^{r_2}([0,T]\times\Omega)}(\|\partial_t\rho\|_{L^{r_1}([0,T]\times\Omega)}+\|\nabla\rho\|_{L^{r_1}([0,T]\times\Omega)})\notag%\label{commutator-2}
% 	% \end{align}
% 	% and
% 	\begin{align}
% 	\|\nabla((\widetilde{\rho}\widetilde{u})_i^{\varepsilon,\sigma}-\rho\widetilde{u}_i^{\varepsilon,\sigma})\|_{L_{\mathrm{loc}}^r(0,T;L^r(V_i))}\leq C\|u\|_{L^{r_2}([0,T]\times\Omega)}\|\nabla\rho\|_{L^{r_1}([0,T]\times\Omega)},\notag%\label{commutator-3}
% 	\end{align}
% 	where $V_i$ $(i=1,...,k)$ is same as mentioned above.
% 	%
% 	Furthermore,
% 	\begin{align}
% 	\nabla((\widetilde{\rho}\widetilde{u})_i^{\varepsilon,\sigma}-\rho\widetilde{u}_i^{\varepsilon,\sigma})\to0 \quad \text{in} \ L^{\underline{r}}_{\mathrm{loc}}(0,T;L^{\underline{r}}(V_i))\quad\text{as }\varepsilon\to0,\notag%\label{commutator-convergence-2}	
% 	\end{align}
% 	where $\underline{r}$ is given as in Lemma \ref{Commutator-lemma-1}.
% \end{lem}

\begin{lem}
	\label{Commutator-lemma-3}
	Let $1\leq r, r_1, r_2< \infty$, $1/ r_1+1/r_2=1/r$
    and $f\in L^{r_1}((0,T)\times\Omega)$, $g\in L^{r_2}((0,T)\times\Omega)$.
    Then we have
	\begin{align}
		(fg)^{\varepsilon}- fg^{\varepsilon}\to0\quad\text{in}\ L^{r}_{\mathrm{loc}}((0,T)\times\Omega)
        \quad\text{as }\varepsilon\to0.\notag%\label{commutator-convergence-3}
	\end{align}
	
\end{lem}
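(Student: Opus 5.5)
We split the commutator into two differences, each of which converges by the standard properties of the space-time mollifier $\eta_\varepsilon$ from \eqref{time-mollifier}:
\[
(fg)^{\varepsilon}-fg^{\varepsilon}=\big((fg)^{\varepsilon}-fg\big)+f\big(g-g^{\varepsilon}\big).
\]
No derivative is involved, so unlike the preceding commutator lemma there is no need to exploit the cancellation structure of the commutator. It suffices to show that each term tends to zero in $L^r(K)$ for an arbitrary compact set $K\subset(0,T)\times\Omega$.

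First we fix such a $K$ and let $d:=\mathrm{dist}(K,\partial((0,T)\times\Omega))>0$. We set $K_d:=\{z:\mathrm{dist}(z,K)\le d/2\}$, which is again a compact subset of $(0,T)\times\Omega$. For $\varepsilon<d/2$, the value $h^\varepsilon(z)$ at any $z\in K$ only involves $h$ on $K_d$, because $\eta_\varepsilon$ is supported in the ball of radius $\varepsilon$ around the origin. Hence on $K$ the mollification coincides with the mollification of the zero extension of $h\mathbbm{1}_{K_d}$ to $\mathbb{R}^4$. For $h\in L^p$ with $1\le p<\infty$, this gives the classical convergence $h^\varepsilon\to h$ in $L^p(K)$, by density of continuous compactly supported functions together with the uniform bound $\|h^\varepsilon\|_{L^p(K)}\le\|h\|_{L^p(K_d)}$.

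Next we apply this twice. By H\"older's inequality with $1/r_1+1/r_2=1/r$, we have $fg\in L^{r}((0,T)\times\Omega)$, and $r<\infty$ by assumption. Therefore $(fg)^\varepsilon\to fg$ in $L^r(K)$. Likewise $g\in L^{r_2}$ with $r_2<\infty$, so $g^\varepsilon\to g$ in $L^{r_2}(K)$, and H\"older gives
\[
\|f(g-g^\varepsilon)\|_{L^r(K)}\le\|f\|_{L^{r_1}(K)}\|g-g^\varepsilon\|_{L^{r_2}(K)}\to0 .
\]
Combining the two limits yields the claim on $K$. Since $K$ was arbitrary, the convergence holds in $L^r_{\mathrm{loc}}((0,T)\times\Omega)$.

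There is no serious obstacle in this argument. The only points requiring care are the localization, which lets us ignore the boundary of $(0,T)\times\Omega$ by working only with standard mollifiers away from it, and the finiteness of the exponents. The latter is essential: if $r_2=\infty$, then $g^\varepsilon$ need not converge to $g$ in $L^\infty$, and this is exactly why the lemma excludes the endpoint, in contrast with the exponent $\underline{r}$ appearing in \eqref{commutator-convergence-1}.
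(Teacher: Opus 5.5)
Your proof is correct. The paper gives no proof of this lemma: it is recalled from the literature alongside the preceding commutator lemma, so there is no competing argument to compare against. Your argument is the standard self-contained one. It splits $(fg)^{\varepsilon}-fg^{\varepsilon}=((fg)^{\varepsilon}-fg)+f(g-g^{\varepsilon})$ and uses the $L^p_{\mathrm{loc}}$ convergence of mollifications for finite $p$. You correctly localize away from $\partial((0,T)\times\Omega)$, where $(\cdot)^{\varepsilon}$ coincides with the mollification of a zero extension. Hölder's inequality then handles the second term.

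One remark in your last paragraph is overstated. Finiteness of $r_2$ is needed for your Hölder step, but not for the conclusion. Suppose $r_2=\infty$, so that $r=r_1<\infty$. Then $g^{\varepsilon}\to g$ at every Lebesgue point of $g$, and $|g^{\varepsilon}|\le\|g\|_{L^\infty}$ because $\eta\ge0$ and $\int\eta_\varepsilon=1$. Hence $|f(g-g^{\varepsilon})|^{r}\le(2\|g\|_{L^\infty})^{r}|f|^{r}$, and dominated convergence gives $f(g-g^{\varepsilon})\to0$ in $L^{r}_{\mathrm{loc}}$. The case $r_1=\infty$ is also harmless, since Hölder's inequality with $\|f\|_{L^\infty}$ applies directly. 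The only genuine restriction is $r<\infty$, which is what guarantees $(fg)^{\varepsilon}\to fg$ in $L^{r}_{\mathrm{loc}}$. So the endpoint exclusion is not ``exactly'' analogous to the $\underline{r}$ loss in the derivative commutator lemma. This does not affect the validity of your proof of the statement as given.
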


The next lemma gives a Hardy-type embedding (see \cite{KJF}).
\begin{lem}
	\label{Hardy}
	Let $p\in[1,\infty)$ and $f\in W_{0}^{1,p}(\Omega)$.
    There is a constant $C$, depending on $p$ and $\Omega$, such that
	\begin{align*}
		\Big\|\frac{f(x)}{\mathrm{dist}(x,\partial\Omega)}\Big\|_{L^p}\leq C\|f\|_{W_0^{1,p}}.
	\end{align*}
\end{lem}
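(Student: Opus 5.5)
By density of $C_0^\infty(\Omega)$ in $W_0^{1,p}(\Omega)$ and Fatou's lemma, it suffices to prove
\[
\int_\Omega \frac{|f(x)|^p}{\mathrm{dist}(x,\partial\Omega)^p}\,\mathrm{d}x\le C\|f\|_{W^{1,p}}^p
\quad\text{for } f\in C_0^\infty(\Omega).
\]
I will localize with the boundary charts already used in Appendix \ref{APP-modif}. Take the finitely many balls $B(\mathrm{P}_i,r_i)$, $i=1,\dots,k$, in which $\Omega$ is the supergraph $\{x_3>h_i(x_1,x_2)\}$ with $h_i\in C^1$. Shrinking the radii if necessary, each $h_i$ is Lipschitz with some constant $L$. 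Let $\{\xi_i\}_{i=0}^k$ be the partition of unity from the appendix and split $f=\sum_i\xi_i f$. The product rule gives $\|\xi_i f\|_{W^{1,p}}\le C\|f\|_{W^{1,p}}$, so it suffices to treat each piece separately.

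The interior piece $\xi_0f$ is supported in $V_0\subset\subset\Omega$. There $\mathrm{dist}(x,\partial\Omega)\ge c_0>0$, and the bound $\|\xi_0 f/\mathrm{dist}\|_{L^p}\le c_0^{-1}\|f\|_{L^p}$ is immediate.

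For a boundary piece $g=\xi_i f$, set $s=x_3-h_i(x')$ with $x'=(x_1,x_2)$. First I compare $\mathrm{dist}(x,\partial\Omega)$ with the vertical distance $s$. For $x\in V_i$ near the boundary, the nearest boundary point lies in the same chart. The Lipschitz cone condition then gives
\[
\frac{s}{\sqrt{1+L^2}}\le \mathrm{dist}(x,\partial\Omega)\le s .
\]
Hence it suffices to bound $\int |g|^p s^{-p}\,\mathrm{d}x$. Points of $\mathrm{supp}\,\xi_i$ far from $\partial\Omega$ again have distance bounded below and are handled as in the interior case.

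Fix $x'$ and set $G(s)=g(x',h_i(x')+s)$. Then $G(0)=0$, $G$ has compact support in $[0,\infty)$, and $|G'(s)|\le|\nabla g|$. The one-dimensional Hardy inequality
\[
\int_0^\infty \frac{|G(s)|^p}{s^p}\,\mathrm{d}s\le\Big(\frac{p}{p-1}\Big)^p\int_0^\infty |G'(s)|^p\,\mathrm{d}s,\qquad p>1,
\]
follows by writing $G(s)=\int_0^s G'$ and applying Minkowski's integral inequality, or alternatively by integrating by parts in $\int |G|^p\,\mathrm{d}(s^{1-p})/(1-p)$ and then using H\"older. Integrating in $x'$ and using that $(x',s)\mapsto(x',h_i(x')+s)$ has unit Jacobian gives $\|g/s\|_{L^p}\le \frac{p}{p-1}\|\nabla g\|_{L^p}$. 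Summing over $i$ yields the lemma with a constant $C$ depending on $p$ and $\Omega$, through $L$, $k$, $c_0$ and the partition of unity.

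The main obstacle is the endpoint $p=1$. There the one-dimensional constant $p/(p-1)$ blows up, and the inequality genuinely fails: $\int_0^\delta|G|/s\,\mathrm{d}s$ can only be bounded by $\int|G'(s)|\log(\delta/s)\,\mathrm{d}s$. The argument above therefore proves the lemma for $p\in(1,\infty)$. The endpoint $p=1$ would need a separate statement, and I would not rely on it. This is harmless for the paper, since Lemma \ref{Hardy} is only invoked in \eqref{delta-limit-2} with $p=r\ge2$. The secondary technical point is the distance comparison near the edges of each chart. To handle it, I would shrink the $V_i$ so that the nearest boundary point of any $x\in V_i$ with small distance stays inside $B(\mathrm{P}_i,r_i)$.
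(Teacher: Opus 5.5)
The paper does not prove Lemma~\ref{Hardy}; it quotes the classical Hardy inequality from \cite{KJF}. Your proof is the standard self-contained argument for that result: a partition of unity, Lipschitz graph charts, the cone bound $\mathrm{dist}(x,\partial\Omega)\ge s/\sqrt{1+L^2}$, and the one-dimensional Hardy inequality along vertical lines. It is correct for $1<p<\infty$. Compared with the citation, it shows that only Lipschitz regularity of $\partial\Omega$ is used and that the constant has the form $C_\Omega\,p/(p-1)$.

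Two small points on the details:
\begin{itemize}
\item You only need the lower bound on the distance. The upper bound $\mathrm{dist}\le s$ can fail when the foot point $(x',h_i(x'))$ leaves $B(\mathrm{P}_i,r_i)$, but you never use it. The lower bound is justified as you indicate: if $x\in B(\mathrm{P}_i,r_i/2)\cap\Omega$ and $\mathrm{dist}(x,\partial\Omega)<r_i/2$, the nearest boundary point lies in $B(\mathrm{P}_i,r_i)$, hence on the graph of $h_i$.
\item The cut-offs $\xi_i$, $i\ge1$, should be supported in the balls $B(\mathrm{P}_i,r_i/2)$, not compactly inside $V_i=\Omega\cap B(\mathrm{P}_i,r_i/2)$. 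Otherwise they cannot sum to $1$ near $\partial\Omega$.
\end{itemize}

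Your caveat about $p=1$ is right, and it exposes an inaccuracy in the statement as written: the lemma is false for $p=1$. You can replace the heuristic about a logarithmic weight with an explicit counterexample.
\begin{itemize}
\item Let $d=\mathrm{dist}(\cdot,\partial\Omega)$ and $f_n=\min\{1,nd\}$. It is Lipschitz and vanishes on $\partial\Omega$, so $f_n\in W_0^{1,1}(\Omega)$.
\item Its norm stays bounded: $\|f_n\|_{L^1}\le|\Omega|$ and $\|\nabla f_n\|_{L^1}=n\,|\{d<1/n\}|\to|\partial\Omega|$.
\item The weighted integral blows up. Since $|\nabla d|=1$ a.e., the coarea formula gives $\int_\Omega f_n/d\,\mathrm{d}x\ge\int_{1/n}^{\delta_0}t^{-1}\mathcal{H}^2(\{d=t\})\,\mathrm{d}t\ge\frac12|\partial\Omega|\log(n\delta_0)\to\infty$. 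Here $\mathcal{H}^2$ is surface measure and $\delta_0$ is a small tubular width.
\end{itemize}
So the correct range is $p\in(1,\infty)$. This is all the paper needs, since the lemma is only applied in \eqref{delta-limit-2}, with $p=r\ge2$.
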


\subsection{{\L}ojasiewicz--Simon inequality}

% We recall the following \L ojasiwicz--Simon inequality
 The following gradient inequality for the Cahn--Hilliard equation with a non-constant gradient energy coefficient is crucial in the proof of Theorem \ref{global-strong}  (see \cite[Theorem 1.1]{CGGS}).

\begin{lem}
	\label{LS}
	Let $\Omega\subset\mathbb{R}^3$ be a bounded smooth domain.
    Assume that $a(\cdot)$ is real analytic on the open interval $(-1,1)$.
    Then, for any  $\psi \in\mathcal{S}_m$, there exist constants $\theta\in(0,1/2)$, $C_L>0$ and $\kappa>0$
    such that
	the following inequality holds
	\[|E_{\mathrm{free}}(\varphi)-E_{\mathrm{free}}(\psi)|^{1-\theta}
    \leq C_L \Big\|-\mathrm{div}(a(\varphi)\nabla\varphi)+\frac{a'(\varphi)}{2}|\nabla\varphi|^2+\Psi'(\varphi)
    -\overline{\frac{a'(\varphi)}{2}|\nabla\varphi|^2+\Psi'(\varphi)}\Big\|_{L^2},\]
    for all $\varphi\in H^2(\Omega)$ such that $\overline{\varphi}=m$, $\partial_\mathbf{n}\varphi=0$ on $\partial\Omega$ and $\|\varphi-\psi\|_{H^2}\leq \kappa$.
\end{lem}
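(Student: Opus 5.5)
This is the Łojasiewicz--Simon gradient inequality of Lemma \ref{LS}, quoted from \cite[Theorem 1.1]{CGGS}. I would prove it by the standard route for singular potentials: localize away from $\pm1$, reduce to an analytic functional on a Hilbert space, and apply the abstract Łojasiewicz--Simon theorem (Chill's version).

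\textbf{Step 1: separation.} Since $\psi\in\mathcal{S}_m$ solves \eqref{station}, elliptic regularity and the singularity of $\Psi'$ at $\pm1$ give $\|\psi\|_{C(\overline{\Omega})}\le 1-\xi$ for some $\xi>0$; this is the argument behind \eqref{p1}. Using \eqref{SoB}, for $\|\varphi-\psi\|_{H^2}\le\kappa$ with $\kappa\le \xi/(2C_\mathrm{S})$ we get $\|\varphi\|_{C(\overline{\Omega})}\le1-\xi/2$. On $[-1+\xi/2,1-\xi/2]$ I modify $\Psi$ outside this range to a function $\widetilde\Psi$ that is real analytic on a neighborhood of the relevant range, with bounded derivatives there. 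In the neighborhood, $E_{\mathrm{free}}$ then coincides with the regularized energy $\widetilde E$.

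\textbf{Step 2: functional setting.} I work on the affine space $\{\varphi\in H^2_N:\overline{\varphi}=m\}$, writing $\varphi=\psi+u$ with $u\in V:=\{u\in H^2(\Omega):\partial_\mathbf{n}u=0,\ \overline{u}=0\}$. I view $\widetilde E$ as a map $V\to\mathbb{R}$ whose derivative, after projecting out constants, is
\[
\mathcal{M}(\varphi)=-\mathrm{div}(a(\varphi)\nabla\varphi)+\tfrac{a'(\varphi)}{2}|\nabla\varphi|^2+\widetilde\Psi'(\varphi)-\overline{(\cdots)},
\]
taking values in $L^2_0(\Omega)$. I need two facts.

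\emph{(i) Analyticity.} $\mathcal{M}:V\to L^2_0$ is real analytic near $0$. This follows because $H^2\hookrightarrow L^\infty$ in three dimensions, $H^2$ is a Banach algebra up to the gradient terms, and $a,\widetilde\Psi$ are analytic on the relevant range. Nemytskii operators of analytic functions on $H^2$ are analytic, and the quadratic gradient term maps $H^2\times H^2\to L^2$ continuously via $H^1\hookrightarrow L^4$.

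\emph{(ii) Fredholm property.} The linearization
\[
\mathcal{L}u=-\mathrm{div}(a(\psi)\nabla u)-\mathrm{div}(a'(\psi)u\nabla\psi)+a'(\psi)\nabla\psi\cdot\nabla u+\tfrac{a''(\psi)}{2}|\nabla\psi|^2u+\widetilde\Psi''(\psi)u-\overline{(\cdot)}
\]
is a uniformly elliptic Neumann operator plus a lower-order perturbation, with $a(\psi)\ge a_\ast$. It is symmetric on $L^2_0$, since it is the Hessian of $\widetilde E$, and $V\to L^2_0$ is Fredholm of index zero by elliptic regularity ($\psi\in H^3$, so $a(\psi)\in W^{1,\infty}$). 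The kernel is therefore finite dimensional.

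\textbf{Step 3: abstract inequality.} With $V\hookrightarrow L^2_0\hookrightarrow V'$ dense, and the gradient $\mathcal{M}$ analytic with Fredholm derivative at the critical point $0$, Chill's theorem \cite{LS83}-type (Lyapunov--Schmidt reduction onto $\ker\mathcal{L}$, followed by the classical finite-dimensional Łojasiewicz inequality for the analytic reduced function) gives
\[
|\widetilde E(\psi+u)-\widetilde E(\psi)|^{1-\theta}\le C_L\|\mathcal{M}(\psi+u)\|_{L^2}
\]
for $\|u\|_V\le\kappa$ and some $\theta\in(0,1/2)$. Since $\widetilde E=E_{\mathrm{free}}$ and $\widetilde\Psi'=\Psi'$ there, the statement follows.

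\textbf{Main obstacle.} The delicate point is the quasilinear, non-constant $a$. The gradient must be measured in $L^2$ while the variable lives in $H^2$, so one must check that the Lyapunov--Schmidt reduction works with the $L^2$ norm on the right. This is where the Fredholm property of $\mathcal{L}:V\to L^2_0$ is essential, rather than the more usual $H^1\to(H^1)'$ setting. I would first verify an estimate of the form $\|u\|_{H^2}\le C(\|\mathcal{L}u\|_{L^2}+\|Pu\|)$, with $P$ the projection onto the kernel, and then apply the implicit function theorem to $\mathcal{M}+P$ in the $H^2\to L^2$ setting.
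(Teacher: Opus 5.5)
Your outline is a sound proof. Note, though, that the paper does not prove this lemma: it quotes it from \cite[Theorem 1.1]{CGGS}, so all technical verifications are delegated there. What you give is the standard self-contained route. It consists of separation, real analyticity of the Euler--Lagrange map from $\{u\in H^2(\Omega):\partial_\mathbf{n}u=0,\ \overline{u}=0\}$ into $L^2_0(\Omega)$, Fredholmness of the symmetric Hessian, and a Lyapunov--Schmidt reduction followed by the finite-dimensional {\L}ojasiewicz inequality.

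Compared with the bare citation, your approach explains the shape of the statement. Because of the quasilinear term, $E_{\mathrm{free}}$ is not even differentiable on $H^1(\Omega)$: its first variation contains $\frac{a'(\varphi)}{2}|\nabla\varphi|^2h$, which requires $h\in L^\infty$. So the $H^1\to(H^1)'$ framework is unavailable, and one is forced to measure closeness in $H^2$ and the gradient in $L^2$, which is exactly the form used in \eqref{b17}. Incidentally, the first-order terms in your $\mathcal{L}$ cancel. What remains is $-\mathrm{div}(a(\psi)\nabla u)$ plus a zeroth-order term with coefficient $\Psi''(\psi)-a'(\psi)\Delta\psi-\frac{1}{2}a''(\psi)|\nabla\psi|^2\in L^6(\Omega)$, which makes self-adjointness and the compact-perturbation argument immediate.

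The step you should not wave through is Step 1. The lemma concerns an arbitrary $\psi\in\mathcal{S}_m$, whereas \eqref{p1} is stated only for the local minimizer and is itself a citation of \cite[Proposition 3.2]{CGGS}. Moreover, $\psi\in H^2(\Omega)$ with $\Psi'(\psi)\in L^2(\Omega)$ does not by itself keep $\psi$ away from $\pm1$, since $\Psi'$ has only a logarithmic singularity.

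The mechanism is the equation itself. With $A(s)=\int_0^s\sqrt{a(r)}\,\mathrm{d}r$ one has $-\mathrm{div}(a(\psi)\nabla\psi)+\frac{a'(\psi)}{2}|\nabla\psi|^2=-\sqrt{a(\psi)}\,\Delta A(\psi)$. Hence \eqref{station} becomes $-\Delta A(\psi)+\Psi_0'(\psi)/\sqrt{a(\psi)}=(c+\Theta_0\psi)/\sqrt{a(\psi)}$, with $c$ the constant on the right of \eqref{station}. Test with truncations of $|\Psi_0'(\psi)|^{p-2}\Psi_0'(\psi)$. The principal term is nonnegative because $\nabla A(\psi)=\sqrt{a(\psi)}\nabla\psi$, $\partial_\mathbf{n}A(\psi)=0$ and $\Psi_0''>0$. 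Letting $p\to\infty$ gives $\|\Psi_0'(\psi)\|_{L^\infty}\le|c|+\Theta_0$, hence $\|\psi\|_{C(\overline{\Omega})}\le1-\xi$.

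The same identity then gives $\Delta A(\psi)=(\Psi'(\psi)-c)/\sqrt{a(\psi)}\in H^1(\Omega)$. Therefore $A(\psi)\in H^3(\Omega)$ and $\psi\in H^3(\Omega)$, which you use in the Fredholm step.

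Finally, the finite-dimensional {\L}ojasiewicz exponent is only guaranteed in $(0,1/2]$. After shrinking $\kappa$ so that $|E_{\mathrm{free}}(\varphi)-E_{\mathrm{free}}(\psi)|\le1$, you may decrease it to obtain $\theta\in(0,1/2)$ as stated.
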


%%%%%%%%%%%%%%%%%%%%%%%%%%%%%%%%%%%%%%%%%%%%
\section*{Declarations}
\noindent \textbf{Acknowledgments.}
This work has benefited from some stimulating
discussions between H. Garcke and H. Wu during the Thematic Program on Free Boundary Problems at the Erwin Schr\"{o}dinger International Institute for Mathematics and Physics (ESI) in Vienna, whose hospitality is kindly acknowledged.
H. Wu is a member of the Key Laboratory of Mathematics for Nonlinear Sciences (Fudan University), Ministry of Education of China.
\smallskip

\noindent \textbf{Funding.}
H. Wu was partially supported by the Natural Science Foundation of Shanghai under grant number 25ZR1401023.
\smallskip

\noindent \textbf{Competing interests.}
The authors have no relevant financial or non-financial interests to disclose.
\smallskip

\noindent \textbf{Data availability statement.}
Data sharing is not applicable to this article, as no datasets were generated or analyzed during the current study.

\end{document}